\documentclass[12pt]{amsart}
\usepackage{aliascnt}
\usepackage{amssymb}
\usepackage{amsmath}
\usepackage[margin=1in]{geometry}
\usepackage{enumitem}
\usepackage{xcolor}
\definecolor{darkblue}{rgb}{0.0, 0.0, 0.8}
\usepackage[colorlinks=true, allcolors=darkblue]{hyperref}
\usepackage{mathtools}

\newtheorem{theorem}{Theorem}[section]

\newaliascnt{lemma}{theorem}
\newtheorem{lemma}[lemma]{Lemma}
\aliascntresetthe{lemma}

\newaliascnt{proposition}{theorem}
\newtheorem{proposition}[proposition]{Proposition}
\aliascntresetthe{proposition}

\newaliascnt{corollary}{theorem}
\newtheorem{corollary}[corollary]{Corollary}
\aliascntresetthe{corollary}

\newaliascnt{conjecture}{theorem}

\aliascntresetthe{conjecture}

\newaliascnt{assumption}{theorem}

\aliascntresetthe{assumption}

\newaliascnt{definition}{theorem}
\newtheorem{definition}[definition]{Definition}
\aliascntresetthe{definition}

\newaliascnt{question}{theorem}

\aliascntresetthe{question}

\newaliascnt{remark}{theorem}
\newtheorem{remark}[remark]{Remark}
\aliascntresetthe{remark}

\newaliascnt{example}{theorem}

\aliascntresetthe{example}

\newtheorem*{notation*}{Notation}
\newtheorem*{theorem*}{Theorem}
\newtheorem*{conjecture*}{Conjecture}

\numberwithin{figure}{section}
\numberwithin{table}{section}
\numberwithin{equation}{section}

\allowdisplaybreaks

\newcommand{\bN}{\mathbb N}

\newcommand{\bR}{\mathbb R}

\newcommand{\cH}{\mathcal H}

\newcommand{\cK}{\mathcal K}

\newcommand{\cM}{\mathcal M}

\newcommand{\cU}{\mathcal U}
\newcommand{\cV}{\mathcal V}

\newcommand{\dist}{\mathrm{dist}}

\newcommand{\id}{\mathrm{id}}

\newcommand{\ip}[2]{\left\langle #1,#2\right\rangle_{\cH}}
\newcommand{\norm}[1]{\left\lVert #1\right\rVert_{\cH}}
\newcommand{\opnorm}[1]{\left\lVert #1\right\rVert_{\mathrm{op}}}

\usepackage[capitalise,nameinlink]{cleveref}
\crefname{theorem}{theorem}{theorems}
\Crefname{theorem}{Theorem}{Theorems}
\crefname{lemma}{lemma}{lemmas}
\Crefname{lemma}{Lemma}{Lemmas}
\crefname{proposition}{proposition}{propositions}
\Crefname{proposition}{Proposition}{Propositions}
\crefname{corollary}{corollary}{corollaries}
\Crefname{corollary}{Corollary}{Corollaries}
\crefname{conjecture}{conjecture}{conjectures}
\Crefname{conjecture}{Conjecture}{Conjectures}
\crefname{assumption}{assumption}{assumptions}
\Crefname{assumption}{Assumption}{Assumptions}
\crefname{definition}{definition}{definitions}
\Crefname{definition}{Definition}{Definitions}
\crefname{question}{question}{questions}
\Crefname{question}{Question}{Questions}
\crefname{remark}{remark}{remarks}
\Crefname{remark}{Remark}{Remarks}
\crefname{example}{example}{examples}
\Crefname{example}{Example}{Examples}
\usepackage{autonum}
\title[Conformal Flexibility of Completeness]
{On Conformal Flexibility of Completeness\\ and Minimizing Geodesics
on Hilbert Manifolds}

\author{Levin Maier}
\address{Faculty of Mathematics and Computer Science,
	University of Heidelberg}
\email{lmaier@mathi.uni-heidelberg.de}
\keywords{Hopf--Rinow theorem, Hilbert manifolds, strong Riemannian metrics,
	conformal completeness, minimizing geodesics}

\subjclass[2020]{Primary 53C22; Secondary 53C23, 58B20}

\begin{document}
	\begin{abstract}
This article is part of a broader programme investigating which features of
finite-dimensional Riemannian geometry persist in infinite dimensions. The
Hopf--Rinow theorem fails even for Hilbert manifolds: metric and geodesic
completeness need not agree, and neither property guarantees a
length-minimizing geodesic between prescribed endpoints.

Despite this failure, our first main result shows that the conformal class of
every smooth strong Riemannian metric on a smooth separable Hilbert manifold
contains a smooth strong representative that is metrically and geodesically
complete and such that every two points in the same connected component are
joined by a length-minimizing geodesic.

By contrast, our second main result establishes a local flexibility phenomenon
for metric completeness that is inherently infinite-dimensional. Given any
prescribed Hilbert-norm ball, a metrically complete strong metric admits a
conformal deformation which is equal to one outside that ball, preserves
geodesic completeness, and destroys metric completeness.
\end{abstract}
	\maketitle
	\tableofcontents
	\section{Introduction}
	\subsection*{Context}

Infinite-dimensional manifolds arise naturally when the configurations of a
system are themselves functions, maps, or transformations.  Examples include
fluid configurations in hydrodynamics, probability densities in optimal
transport, spaces of shapes and immersions in geometric data analysis, and
diffeomorphisms in computational anatomy
\cite{Arnold66,BenamouBrenier2000,Otto2001,Younes1998,
BauerBruverisMichor2014Overview,SrivastavaKlassen2016,
BegMillerTrouveYounes2005}. Riemannian geometry provides a natural language for these spaces: a metric
measures the cost of an infinitesimal deformation, geodesics describe optimal
motions, and completeness asks whether such motions can break down globally. These applications highlight the need for a systematic understanding of
geometric structures on infinite-dimensional manifolds.  

This need has motivated a broader program, of which the present article forms
part, aimed at understanding which features of classical finite-dimensional Riemannian geometry persist in infinite dimensions. Over the past several decades, beginning with the pioneering work of Eells, Elworthy, Omori, and others (see, for example, \cite{Palais68,Eells1966,Eliasson1967,La99,Omori1974,EellsElworthy1970} and the references therein), significant parts of Riemannian geometry have been extended to infinite-dimensional settings, including the theory of connections, geodesic equations, curvature, and variational methods.

At the same time, it has become clear that infinite-dimensional geometry exhibits phenomena with no finite-dimensional counterpart.  Perhaps the best-known example is the failure of the Hopf--Rinow theorem,
first observed by Atkin~\cite{HopfrinowfalseAktkin} and later demonstrated by Ekeland through a different construction~\cite{HopfrinowfalseEkeland}.
Unlike in finite dimensions, geodesic completeness and metric completeness are not equivalent, and neither property guarantees the existence of a minimizing geodesic between every pair of points in the same connected
component.  At the variational level, the basic obstruction is the loss of compactness: a minimizing sequence of curves may converge only weakly, while the energy need not be lower semicontinuous under this weak convergence.

Another genuinely infinite-dimensional phenomenon, which has motivated substantial research over the past three decades, is the possible vanishing of geodesic distance for weak Riemannian and weak Finsler metrics: the induced distance may vanish identically even though the infinitesimal metric is pointwise nondegenerate; see \cite{EliashbergPolterovich1993,Michor_Mumford_vanishing_geodesic_distance, BauerHarmsPreston2020,BauerBruverisHarmsMichor2012,JerrardMaor2019}.

Against this background, it is natural to revisit another foundational finite-dimensional result, the Nomizu--Ozeki theorem, which asserts that every finite-dimensional Riemannian manifold is conformally equivalent to a complete Riemannian manifold~\cite{NomizuOzeki1961}.  In infinite dimensions, Biliotti--Mercuri proved that every separable Hilbert manifold equipped with a strong Riemannian metric is conformally equivalent to a metrically complete
Riemannian metric~\cite[Theorem~2.1]{BiliottiMercuri2017}.  For strong metrics, metric completeness also implies geodesic completeness~\cite{La99}.  However, Atkin showed that this still does not guarantee the existence of a minimizing geodesic between every two points, even on an ellipsoid in Hilbert space~\cite{HopfrinowfalseAktkin}.

The remaining endpoint-minimization assertion is not a marginal addition to metric completeness.  It is the part of Hopf--Rinow that turns local Riemannian geometry into a globally solvable two-point variational problem. Metric completeness rules out finite-length escape, but it does not force a minimizing sequence of curves to converge to a minimizer.  Thus the central
question is whether conformal freedom can overcome this core global failure of the Hopf--Rinow theorem on Hilbert manifolds.

\subsection*{Main results} 
Before stating our two main results, we recall the class of metrics considered
throughout.  A Riemannian metric \(G\) on a separable Hilbert manifold \(\cM\)
is called \emph{strong} if, for every \(x\in\cM\), the norm induced by \(G_x\)
yields the given topology on \(T_x\cM\); see
\Cref{Def:Strong and weak metrics} for further details. With this terminology, our first main result shows that this obstruction can always be overcome by a conformal change: every smooth strong Riemannian metric admits a conformal representative with the three principal global conclusions of Hopf--Rinow:	
\begin{theorem}[=\Cref{thm:main-theorem}]\label{thm:intro-full-hopf-rinow}
Let \(\cM\) be a separable Hilbert manifold, and let \(G\) be a smooth strong Riemannian metric on \(\cM\). Then the conformal class of \(G\) contains a smooth strong Riemannian metric that is metrically complete and geodesically complete, and such that every two points in the same connected component of \(\cM\) are joined by a length-minimizing geodesic.
\end{theorem}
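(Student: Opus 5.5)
The plan is to first reduce to a complete metric, and then use a second conformal change that forces almost all of the length of near-minimizing curves into a locally compact region. There Arzelà--Ascoli restores the compactness that fails in infinite dimensions.

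\emph{Reduction to a complete metric.} By the theorem of Biliotti--Mercuri \cite[Theorem~2.1]{BiliottiMercuri2017}, after a first conformal change we may assume that \(G\) is metrically complete; by \cite{La99} it is then also geodesically complete. For strong metrics the exponential map is a local diffeomorphism and the Gauss lemma holds. I will therefore use the standard local fact that every point has a strongly convex normal ball in which any two points are joined by a unique minimizing geodesic. Since \(\cM\) is separable and metrizable, I choose a countable, locally finite family of such balls \(B_i=B(p_i,r_i)\) covering \(\cM\).

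\emph{Construction of the conformal factor.} In each connected component I join the centers \(p_i\) of intersecting balls by geodesic segments. This produces a locally finite embedded graph \(\Gamma\), which is locally compact. I then take \(\widetilde G=e^{2u}G\), where \(u\) is smooth, satisfies \(u\ge 0\) so that completeness is preserved, and is very small on a thin tube around \(\Gamma\). Away from \(\Gamma\), \(u\) grows on annuli chosen so that leaving the tube costs more than travelling along \(\Gamma\). The conformal factor is built by smooth partitions of unity, which exist on separable Hilbert manifolds. It stays equal to \(1\) deep inside small balls around each point's nearest vertex, so the convex-ball property survives locally.

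\emph{Existence of minimizers.} The target is a decomposition lemma. For \(x,y\) in one component, any curve of near-infimal \(\widetilde G\)-length can be replaced, without increasing its length by more than \(\varepsilon\), by the concatenation of three pieces: a minimizing geodesic from \(x\) to a point \(a\) in a compact set \(K_x\subset\Gamma\)-tube; a curve from \(a\) to \(b\) inside a fixed compact, essentially finite-dimensional neighbourhood \(K\) of a finite subgraph; and a minimizing geodesic from \(b\in K_y\) to \(y\). This gives
\[
d_{\widetilde G}(x,y)=\inf_{a\in K_x,\,b\in K_y}\bigl(d(x,a)+d_K(a,b)+d(b,y)\bigr).
\]
The right-hand side is a continuous function on a compact set, so the infimum is attained. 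The middle term is attained by Arzelà--Ascoli and lower semicontinuity of length in the locally compact region, and the end terms are attained inside the convex balls. The concatenated minimizer is locally minimizing, hence a smooth \(\widetilde G\)-geodesic by the first variation formula. Metric completeness of \(\widetilde G\) follows from \(\widetilde G\ge G\), and geodesic completeness then follows from \cite{La99}.

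\emph{Main obstacle.} The hardest step is the decomposition lemma: showing quantitatively that the penalty in \(u\) really forces near-minimizing curves into the tube, except for the initial and final pieces. This matters because a curve could otherwise escape into infinitely many directions at uniformly small cost. I would first try to prove it with a comparison argument: a curve spending length \(\delta\) outside the tube pays at least \(\Lambda\delta\), whereas rerouting through the tube costs at most a bounded multiple of \(\delta\). The growth rate \(\Lambda\) of \(e^{u}\) is chosen inductively ball by ball to dominate the local distortion constants.
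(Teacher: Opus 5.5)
The completeness part of your argument is fine: $\widetilde G\geq G$ together with Lang's theorem gives metric and then geodesic completeness. The gap is at exactly the point where compactness is supposed to return. In a manifold modeled on an infinite-dimensional Hilbert space, no point has a compact neighbourhood. So the ``fixed compact, essentially finite-dimensional neighbourhood $K$ of a finite subgraph'' cannot exist.

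A smooth positive factor $e^{2u}$ that is small along $\Gamma$ is small on an \emph{open} neighbourhood of $\Gamma$. The most you can do is therefore confine curves to an open tube, and open sets are not locally compact. Inside that tube a minimizing sequence is again merely bounded and may converge only weakly, so Arzel\`a--Ascoli is unavailable.

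If instead $K$ is genuinely compact, for instance $\Gamma$ itself, it has empty interior. Then $d_{\widetilde G}(x,y)=\inf_{a,b}\bigl(d(x,a)+d_K(a,b)+d(b,y)\bigr)$ is only an upper bound. A $\widetilde G$-minimizer must be a smooth geodesic, so it cannot follow the broken path through the vertices of $\Gamma$; it has to cut corners through the open tube, where in general $d_K(a,b)>d_{\widetilde G}(a,b)$. Likewise, nothing makes the entry sets $K_x,K_y$ compact.

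So even a complete proof of your decomposition lemma would not produce minimizers. The obstruction is not escape from the tube but the absence of strong compactness anywhere. A smaller issue: the locally finite cover by convex balls can already fail. For the flat metric on $\cH$ these are Hilbert balls, and by Corson's theorem an infinite-dimensional reflexive space admits no locally finite cover by bounded closed convex sets.

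The missing idea is to accept weak convergence and build the conformal factor so that the energy itself is weakly lower semicontinuous. After the Eells--Elworthy reduction to an open $\cU\subset\cH$, the paper takes $\rho=s\,e^{F}$ with $s\geq\opnorm{A^{-1}}$ and $F(x)=c(x)\bigl(1+\norm{x}^2\bigr)$. Here $c$ is a smooth majorant whose extension by $+\infty$ off $\cU$ is sequentially weakly lower semicontinuous, and which dominates both the Taylor data of $D=(sA)^{-1}$ and the barrier $d(x)^{-2}$.

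If $x_n\rightharpoonup x$ with $\norm{x_n-x}^2\to r^2$, then $\liminf F(x_n)\geq F(x)+c(x)r^2$. This norm-defect gain dominates any increase of $\ip{D(x_n)z}{z}$, so the inverse forms $Q_z$ are weakly upper semicontinuous. Fenchel duality turns this into sequential weak lower semicontinuity of the extended energy. The Bauer--Maor--Wirth criterion then yields completeness and minimizers by the direct method, with $B\geq\id$ and $B\geq d^{-2}\id$ supplying conditions (a) and (b).
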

The proof does not merely add a boundary barrier to the known conformal completion construction.  Its new feature is an inverse-metric/Fenchel-duality argument: inverse-metric control is converted into weak lower semicontinuity of the energy, thereby making minimizing geodesics survive weak limits. \Cref{thm:intro-full-hopf-rinow} shows that conformal deformation can create a complete geometry with globally minimizing geodesics.  This raises a complementary question: how rigid is metric completeness under further
conformal changes?  In infinite dimensions, it is not rigid at all.  Our second main result realizes this failure locally, inside an arbitrarily prescribed Hilbert-norm ball, while preserving geodesic completeness.
	\begin{theorem}[=\Cref{thm:local-conformal-atkin}]
		\label{thm:intro-local-atkin}
		Let \(\cH\) be an infinite-dimensional separable real Hilbert space, let \(\cU\subset\cH\) be a nonempty open subset, and let \(G\) be a metrically complete smooth strong Riemannian metric on \(\cU\). For every Hilbert-norm ball \(W\subset\cH\) with \(\overline W\subset\cU\), where the closure is taken in \(\cH\), there exists a smooth function
		\[
		\rho\colon\cU\longrightarrow(0,1],
		\qquad
		\rho=1\quad\text{on }\cU\setminus W,
		\]
		such that \(\rho G\) is geodesically complete but not metrically complete.
	\end{theorem}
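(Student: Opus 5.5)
Inside the ball \(W\) we will build an Atkin-type defect: a conformal factor that lets a sequence of points run off to infinity along orthonormal directions with finite total length, while every geodesic still exists for all time. Translate and rescale so that \(W=B(x_0,r)\) and \(\overline W\subset\cU\). Fix an orthonormal sequence \((e_n)\) in \(\cH\), and consider the points \(p_n=x_0+\tfrac r2 e_n\) together with the straight segments \(\sigma_n\) from \(p_n\) to \(p_{n+1}\). These lie in \(B(x_0,r/\sqrt2+\varepsilon)\subset W\). The sequence \((p_n)\) has no Cauchy subsequence in \(\cH\), since \(\|p_n-p_m\|=r/\sqrt2\). Because \(G\) is strong and smooth on the closed ball \(\overline W\), we may first shrink \(W\) so that \(G\) is comparable to the Hilbert metric on \(\overline W\) with constants \(c\le G\le C\). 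This is possible by continuity at \(x_0\), and we then work with the smaller ball.

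\textbf{Step 1 (the conformal factor).} Choose disjoint tubular neighbourhoods \(T_n\) of the segments \(\sigma_n\), of radius \(\delta_n\to0\). These are pairwise disjoint away from the shared endpoints; near the endpoints we use small balls around each \(p_n\). Choose smooth bump functions \(\chi_n\) with values in \([0,1]\), equal to \(1\) near \(\sigma_n\) and supported in \(T_n\). Set
\[
\rho=1-\sum_n(1-\varepsilon_n^2)\chi_n ,
\]
with \(\varepsilon_n\to0\) summable. The supports of the \(\chi_n\) accumulate only at points at uniform positive distance from each other, so the family is locally finite in \(\cH\); indeed any point lies within distance \(\delta_n\) of at most finitely many \(\sigma_n\). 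Hence \(\rho\) is smooth, takes values in \((0,1]\), equals \(1\) off \(W\), and \(\rho G\) is strong. Along the polygon \(\bigcup\sigma_n\) we have \(\rho=\varepsilon_n^2\), so the \(\rho G\)-length of \(\sigma_n\) is at most \(\sqrt C\,\varepsilon_n\,\|p_{n+1}-p_n\|\). Summing gives a finite total, so \((p_n)\) is \(\rho G\)-Cauchy. Since \(\rho\le1\), the \(\rho G\)-distance is at most the \(G\)-distance, but the topology is unchanged. A \(\rho G\)-limit would therefore be an \(\cH\)-limit, which cannot exist. Hence \(\rho G\) is not metrically complete.

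\textbf{Step 2 (geodesic completeness) — the main obstacle.} Let \(\gamma\) be a \(\rho G\)-geodesic on a maximal interval \([0,T)\) with \(T<\infty\). It has constant speed, hence finite length, so \(\gamma(t)\) is \(\rho G\)-Cauchy as \(t\to T\). Outside \(\overline W\) the metric is \(G\), which is metrically complete. A Cauchy curve that eventually stays away from the region where \(\rho\neq1\) therefore converges, and by the standard strong-metric extension result in~\cite{La99} the geodesic then extends. The real danger is a geodesic that accumulates on the "cheap" polygon and slides along the \(\sigma_n\) toward infinity in finite time.

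The first thing to try is to choose \(\delta_n\) and \(\varepsilon_n\) so that the cheap channels are genuinely disconnected in the geodesic sense. Concretely, make \(\chi_n\) a thin tube around \(\sigma_n\) only, and \emph{not} join consecutive tubes: leave a \(\rho=1\) gap of fixed \(G\)-size around each \(p_n\). This removes any finite-length path to infinity, so we would lose metric incompleteness. The fix is to join the tubes but make them so thin (\(\delta_n\ll\varepsilon_n\)) that the conformal gradient term \(\nabla\log\rho\) in the geodesic equation repels geodesics entering through the wall. A geodesic that is tangent to a channel long enough to travel along it then has speed \(|\dot\gamma|_{\rho G}\) comparable to \(\varepsilon_n|\dot\gamma|_{\cH}\), so its Hilbert speed is \(\sim1/\varepsilon_n\). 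We then need \(\sum\) (Hilbert length of \(\sigma_n\))\(\cdot\varepsilon_n\) to diverge in \emph{time}, which conflicts with finite length.

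I therefore expect the honest route to be different. Instead of relying on finite time, we use an Atkin/Ekeland-type construction in which the metric-completion point is approached by curves that are not geodesics. The channel is made to zig-zag through increasingly many orthonormal directions so that no single geodesic can follow it. This is the delicate step: we must show, via a Clairaut/energy-type estimate in each tube, that any geodesic entering \(T_n\) leaves it within bounded \(\rho G\)-time with \(\cH\)-velocity bounded below, so that it visits only finitely many tubes on \([0,T)\). It then converges by local compactness of behaviour in finitely many tubes, and extends.
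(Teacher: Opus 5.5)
\textbf{There is a genuine gap: geodesic completeness (your Step 2) is not proved.}

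Your Step 1 is essentially correct as a proof of metric incompleteness, apart from one repairable detail. As written, \(\rho=1-\sum_n(1-\varepsilon_n^2)\chi_n\) has both \(\chi_{n-1}\) and \(\chi_n\) equal to \(1\) near the shared vertex \(p_n\). There \(\rho=\varepsilon_{n-1}^2+\varepsilon_n^2-1<0\). A locally finite product \(\prod_n\bigl(1-(1-\varepsilon_n^2)\chi_n\bigr)\) fixes this.

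The real problem is Step 2, and geodesic completeness is the main difficulty of the theorem. You correctly identify the danger: a finite-time geodesic along which \(\liminf\rho(\gamma(t))=0\), sliding down the cheap end. But you then only express a hope that a zig-zag channel plus ``a Clairaut/energy-type estimate in each tube'' prevents this. A bare channel with a small conformal factor supplies no such estimate. For flat \(G\), the portions of \(\sigma_n\) where \(\rho\) is locally constant are themselves \(\rho G\)-geodesics, traversed in time proportional to their summable \(\rho G\)-length. Nothing you construct excludes a geodesic that threads the vertices.

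The reduction you sketch is also not logically sufficient. An upper bound on the time spent in each tube, together with a lower bound on the \(\cH\)-speed, does not bound the number of tubes visited before time \(T\). Consecutive tubes touch, so you would need a uniform lower bound on the time per visit. Likewise, ``local compactness of behaviour'' is unavailable in infinite dimensions. Convergence must come from a Cauchy argument using completeness of \(G\), as in the case \(\liminf\rho(\gamma(t))>0\).

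\textbf{What is missing, and how the paper supplies it.} You need a quantitative obstruction to finite-time escape that survives the degeneration \(\rho\to0\). The paper obtains it from Atkin's model, not from the geometry of a channel.
\begin{itemize}
\item The factor \(F=f_{\mathrm A}\circ X\) is built from operators \(A,C\) and a skew-adjoint \(S\) satisfying \([A,S]\geq0\), \(\eta_0C\leq[A,S]\) and \([C,S]\leq\eta_1[A,S]+\nu_1A\).
\item These relations yield the virial inequality \(\tfrac12\,d\rho(Y)+\delta\rho\geq h(Q)\) for the rotation field \(Y(u,w)=(Su,0)\), with \(\int^\infty\sqrt{h}\,dq=\infty\).
\item A crossing argument traps the tail of a putative finite-time geodesic in \(\{\vartheta>1/4\}\).
\item One then passes to mechanical time \(ds=dt/\rho\), where \(\int\rho\,ds=T<\infty\) and \(|Q'|\leq C_Q\sqrt{\mu\rho}\). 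Cauchy--Schwarz forces \(\int h(Q)\,ds=\infty\).
\item Hence the momentum \(J=G(v,Y)\), which satisfies \(J'\geq\mu h(Q)-\mu(\delta+K)\rho\), is unbounded. This contradicts \(|J|\leq C_Y\sqrt{\mu\Lambda}\).
\end{itemize}

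To place this inside a ball, the paper embeds \(\cH_{\mathrm A}\) via \(x\mapsto\lambda\bigl(x/q(x),h_0(\cdot-q(x))\bigr)\) and builds uniform tubular coordinates around the image. This way the end has no ambient accumulation point, while the flow of \(Y\) preserves \(Q\) and the cutoff \(\vartheta\).

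Your orthonormal polygon reproduces the no-accumulation feature. However, it carries no symmetry field \(Y\) and no virial inequality. Step 2 cannot be closed without importing structure of this kind.
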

The force of \Cref{thm:intro-local-atkin} lies in its relative and local character.  No finite-dimensional analogue is possible.  Indeed, if \(W\) had relatively compact closure and \(\rho>0\) were smooth with \(\rho=1\) outside \(W\), then \(\rho\) would be bounded above and below by positive constants.  Hence \(G\) and \(\rho G\) would be uniformly equivalent, and metric completeness of \(G\) could not be destroyed by such a deformation.

In an infinite-dimensional Hilbert space, by contrast, a bounded ball is noncompact.  The theorem exploits this noncompactness to create a finite-length escaping end inside an arbitrarily prescribed ball, while retaining geodesic completeness.  Thus it is not merely another example separating metric and geodesic completeness: it is a relative realization of the Atkin phenomenon inside any chosen local region of an arbitrary complete background geometry.

Taken together, \Cref{thm:intro-full-hopf-rinow} and \Cref{thm:intro-local-atkin} reveal a striking conformal flexibility on Hilbert manifolds.  Starting from any smooth strong Riemannian metric, \Cref{thm:intro-full-hopf-rinow} produces a conformal representative that is metrically complete, geodesically complete, and admits minimizing geodesics between arbitrary endpoints.  Applying \Cref{thm:intro-local-atkin} to this representative then yields a further conformal deformation, localized in an arbitrarily prescribed Hilbert-norm ball, that remains geodesically complete but is no longer metrically complete.  Thus every such conformal class contains both types of metric.  This global nonclosedness phenomenon is formulated precisely in \Cref{thm:atkin-deformation}.
\subsection*{New mechanisms and proof ideas}
The two main theorems rest on two distinct infinite-dimensional mechanisms. For \Cref{thm:intro-full-hopf-rinow}, the central issue is to recover endpoint minimizers despite the loss of compactness.  For \Cref{thm:intro-local-atkin}, the objective is to separate metric and geodesic completeness by a deformation which is both relative and local.

\noindent\emph{Barrier--duality for endpoint minimizers.}
The novelty behind \Cref{thm:intro-full-hopf-rinow} is not merely a conformal
barrier preventing escape towards the boundary.  Such barriers can yield
metric completion, but they do not provide the compactness needed to realize
the distance between prescribed endpoints: minimizing sequences generally
converge only weakly, while their energy need not be weakly lower
semicontinuous.  Our construction simultaneously builds a barrier and controls the inverse quadratic forms with the weak upper-semicontinuity needed for the Fenchel-duality argument.
Fenchel duality converts this inverse-metric information into weak sequential
lower semicontinuity of the extended energy.  The completeness criterion of
Bauer, Maor, and Wirth~\cite{bauer2025completeness} then yields metric
completeness, geodesic completeness, and minimizing geodesics.  Thus the
essential new step is a variational completion mechanism, rather than metric
completion alone.  The construction is first carried out on an open subset of
a separable Hilbert space, obtained through the Eells--Elworthy
open-embedding theorem, and is then pulled back to the original manifold.

\noindent\emph{Localized Atkin ends and dynamical escape control.}
Atkin constructed a globally defined strong Riemannian metric for which metric and geodesic completeness separate~\cite{Atkin97}.  The new contribution in \Cref{thm:intro-local-atkin} is to upgrade this global model to a relative local surgery on an arbitrary complete background metric.  Given any prescribed Hilbert-norm ball, we construct a conformal factor which is identically equal to one outside that ball and implant an Atkin end inside it. 
The construction has to preserve two opposed features under this localization. A finite-length escaping curve through the implanted end creates metric incompleteness, while an independent virial/momentum estimate excludes escape of a geodesic in finite affine time.  Thus the result is not merely another Atkin-type example: it realizes the separation conformally, relative to an arbitrary complete metric, and inside an arbitrarily prescribed local region. This local conformal-flexibility mechanism relies essentially on the noncompactness of bounded balls in infinite-dimensional Hilbert spaces.
\subsection*{Further directions}
The two main theorems provide starting points for several existing and emerging
research programs in infinite-dimensional geometry.

\paragraph{\emph{Periodic geodesics beyond half-Lie groups.}}
The systematic study of nonconstant periodic geodesics in infinite dimensions
has recently begun for half-Lie groups~\cite{BauerMaier2026}.  It is natural
to ask whether the complete global-minimizer geometries supplied by
\Cref{thm:intro-full-hopf-rinow} can serve as a background for variational
constructions of periodic geodesics on more general Hilbert manifolds.  The
theorem does not itself produce closed geodesics, but it supplies one useful
global attainment ingredient for such a theory; further topological and
compactness input would still be required.

\paragraph{\emph{Equivariant completion and shape spaces.}}
A concrete application question is whether the conformal completion in
\Cref{thm:intro-full-hopf-rinow} can be made compatible with the symmetries of
Sobolev manifolds of immersions.  If a reparametrization-invariant construction
can be achieved on a setting with a well-behaved quotient, it could descend to
shape space and provide complete geometries with globally minimizing matching
paths.  This would connect the theorem directly to elastic shape analysis and
diffeomorphic matching
\cite{Younes1998,BauerBruverisMichor2014Overview,SrivastavaKlassen2016,
BegMillerTrouveYounes2005}.

\paragraph{\emph{Beyond quadratic geodesic energy.}}
The barrier--duality mechanism suggests an extension from quadratic
Riemannian energy to fiberwise uniformly convex, superlinear Tonelli
Lagrangians.  One may ask for barrier-type or Finsler-type hypotheses ensuring
endpoint action minimizers together with complete Euler--Lagrange dynamics.
Such a theory would connect the present construction to weak KAM and
Aubry--Mather questions in infinite dimensions
\cite{TonelliHalfLiegroups,HopfRinowHalfLiegroups}.

\paragraph{\emph{Relative and parametric conformal flexibility.}}
\Cref{thm:intro-local-atkin} gives a relative local implantation of one Atkin
end.  Can one implant locally finite collections of ends, prescribe
finite-length escaping curves, or construct smooth parameter families of
conformal deformations while retaining geodesic completeness?  A positive
answer would turn the local theorem into the beginning of a genuine relative
and parametric conformal-flexibility theory.
\subsection*{Overview of the article}
	
\Cref{s:preliminaries} recalls the notions of strong Riemannian metrics, metric and geodesic completeness, and the completeness criterion used in the proof. It also records the Eells--Elworthy open-embedding theorem, which reduces the global problem to an open subset of a separable Hilbert space.

\Cref{sec:conformal Hopf-Rinow} proves the conformal full Hopf--Rinow theorem. We first construct a controlled conformal barrier metric and then verify the weak lower-semicontinuity of its extended energy by means of inverse-metric estimates and Fenchel duality. The abstract completeness criterion then yields metric completeness, geodesic completeness, and the existence of minimizing geodesics.
	
\Cref{s:local-conformal-flexibility} establishes the localized Atkin deformation and derives the global conformal-flexibility result. It shows that metric completeness can be destroyed inside an arbitrarily prescribed Hilbert-norm ball while geodesic completeness is preserved.
	
The following technical subsections contain the technical ingredients for the local construction: the virial escape criterion excluding finite-time geodesic escape, and the construction of the localized Atkin end.
	
\subsection*{Acknowledgements}The author thanks M. Bauer, C. Maor and P.Michor for helpful comments and suggestions.
The author acknowledges funding from the Deutsche
Forschungsgemeinschaft (DFG, German Research Foundation) through grants
281869850 (RTG 2229), 390900948 (EXC-2181/1), and 281071066 (TRR 191).

The author used ChatGPT during the preparation of this article for proofreading, language refinement, and mathematical exploration. In particular, these discussions helped refine the presentation of the barrier--duality argument and the localized conformal construction. The author takes full responsibility for all mathematical statements, proofs, and conclusions in this article.

	\section{Preliminaries: Riemannian Metrics on Infinite-Dimensional Manifolds}\label{s:preliminaries}
	
	In this section, we introduce the basic notation and definitions from infinite-dimensional Riemannian geometry that will be used throughout the article. For a more comprehensive introduction to the subject, we refer the reader to~\cite{La99,schmeding2022introduction,maor2022riemannian}. In particular, we recall a recent completeness result for strong Riemannian metrics in infinite-dimensional settings~\cite{Bauer_2025,bauer2025completeness}.
	
	We start by recalling the notion of a Riemannian metric in the infinite-dimensional setting. In contrast to the finite-dimensional setting, one has to distinguish between two different types of Riemannian metrics, usually referred to as weak and strong Riemannian metrics.
	
	\begin{definition}[Strong and weak metrics]\label{Def:Strong and weak metrics}
		Let $\mathcal M$ be a smooth infinite-dimensional manifold. A Riemannian metric $G$ on $\mathcal M$ is a smooth map
		\[
		G \colon T\mathcal M \times_{\mathcal M} T\mathcal M \to \bR
		\]
		such that $G_x$ is an inner product on $T_x\mathcal M$ for every $x \in \mathcal M$. The metric $G$ is called a \emph{strong Riemannian metric} if the inner product $G_x$ induces the manifold topology on $T_x\mathcal M$ for every $x \in \mathcal M$. It is called a \emph{weak Riemannian metric} if there exists $x\in \mathcal M$ such that the topology induced by $G_x$ on $T_x\mathcal M$ is weaker than the natural manifold topology.
	\end{definition}
	
	Since the existence of a strong Riemannian metric necessarily implies that the underlying manifold is of Hilbert type, cf.~\cite{Bauer_2025}, and since we are mostly interested in strong Riemannian manifolds in this article, we restrict our attention from now on to Hilbert manifolds. The following statement is a specialization of a recent result of Bauer, Maor, and Wirth. It provides sufficient conditions for the three global properties used in this
article: metric completeness, geodesic completeness, and the existence of
minimizing geodesics between arbitrary endpoints.
	
	\begin{theorem}[Completeness for strong Riemannian metrics~{\cite[Theorem 2.1]{bauer2025completeness}}]\label{thm:abstract_completeness}
		Let $\mathcal M$ be an open subset of a Hilbert space $({\mathcal H},\langle \cdot,\cdot \rangle_{{\mathcal H}})$, and let $G$ be a smooth strong Riemannian metric on $\mathcal M$. Assume that the following conditions hold:
		\begin{enumerate}[label=(\alph*)]
			\item \label{item:abc-a} On every $G$-metric ball $B$ in $(\mathcal M,G)$, there exists $C>0$ such that
			$\langle h,h\rangle_{\mathcal H} \leq C G_x(h,h)$
			for all $x\in B$ and all $h\in T_x\mathcal M$.
			
			\item \label{item:abc-b} The closure of $B$ in ${\mathcal H}$, taken with respect to the Hilbert space topology, lies in $\mathcal M$.
			
			\item \label{item:abc-c} For every $p,q\in\mathcal M$, the extended energy
			\[
			\mathcal E_G\colon H^1_{p,q}([0,1],\mathcal H)\longrightarrow [0,\infty],
			\qquad
			\mathcal E_G(\gamma):=
			\begin{cases}
				\displaystyle\int_0^1G_{\gamma(t)}(\dot\gamma(t),\dot\gamma(t))\,dt,
				&\gamma([0,1])\subset\mathcal M,\\[0.6em]
				+\infty,&\gamma([0,1])\not\subset\mathcal M,
			\end{cases}
			\]
			is sequentially weakly lower semicontinuous, where
			\[
			H^1_{p,q}([0,1],\mathcal H)
			:=\{\gamma\in H^1([0,1],\mathcal H):\gamma(0)=p,\ \gamma(1)=q\}.
			\]
		\end{enumerate}
		Then $(\mathcal M,G)$ is metrically and geodesically complete, and there exists a $G$-minimizing geodesic between any two points in the same connected component of $(\mathcal M,G)$.
	\end{theorem}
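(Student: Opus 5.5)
The plan is to derive the three conclusions one at a time. Metric completeness will come directly from \ref{item:abc-a} and \ref{item:abc-b}. Geodesic completeness will then follow from the classical fact that metrically complete strong Riemannian manifolds are geodesically complete~\cite{La99}. Minimizing geodesics will come from the direct method of the calculus of variations, with \ref{item:abc-c} supplying the missing lower semicontinuity.

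\emph{Metric completeness.} Let $(x_n)$ be a $G$-Cauchy sequence; it lies in some $G$-metric ball $B$. If $\gamma$ is a piecewise $C^1$ curve in $B$ joining $x_n$ to $x_m$, then \ref{item:abc-a} gives $\norm{\dot\gamma}\le \sqrt C\,\sqrt{G(\dot\gamma,\dot\gamma)}$. Curves of nearly minimal length between nearby points stay in a slightly larger ball, so we may apply \ref{item:abc-a} on that enlarged ball and obtain $\norm{x_n-x_m}\le \sqrt{C}\,d_G(x_n,x_m)$. Hence $(x_n)$ is Cauchy in $\cH$ and converges to some $x\in\overline B$, and $x\in\cM$ by \ref{item:abc-b}. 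It remains to show $d_G(x_n,x)\to0$. Because $G$ is strong and smooth, on a small Hilbert ball $U$ around $x$ the form $G$ is uniformly equivalent to $\ip{\cdot}{\cdot}$. The straight segment from $x_n$ to $x$ lies in $U$ for large $n$, so $d_G(x_n,x)\lesssim \norm{x_n-x}\to0$.

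\emph{Minimizing geodesics.} Fix $p,q$ in one component and set $d=d_G(p,q)$. Choose piecewise $C^1$ curves $\gamma_k$ from $p$ to $q$ of length $L_k\downarrow d$, parametrized proportionally to arc length on $[0,1]$, so that $\mathcal E_G(\gamma_k)=L_k^2$. All $\gamma_k$ lie in the $G$-ball of radius $d+1$ about $p$, so \ref{item:abc-a} bounds $\int_0^1\norm{\dot\gamma_k}^2\,dt$ uniformly. Together with the fixed endpoints, this gives a uniform $H^1([0,1],\cH)$ bound. Extract a weakly convergent subsequence $\gamma_k\rightharpoonup\gamma$. Point evaluations are weakly continuous, so $\gamma\in H^1_{p,q}$. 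By \ref{item:abc-c}, $\mathcal E_G(\gamma)\le\liminf L_k^2=d^2<\infty$, hence $\gamma([0,1])\subset\cM$. By Cauchy--Schwarz, $\mathrm{length}(\gamma)\le\mathcal E_G(\gamma)^{1/2}\le d$. Since every $H^1$ curve can be approximated by piecewise $C^1$ curves with nearby length, the reverse inequality also holds, so $\gamma$ realizes the distance and has constant speed.

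\emph{Regularity: the main obstacle.} The hard step is showing that this $H^1$ minimizer is a smooth geodesic, because we only know it is a weak limit. I would argue locally. For a strong metric, the exponential map at each point is a local diffeomorphism, and the Gauss lemma holds on a normal neighbourhood. Hence for every $t$ there is $\delta>0$ such that any two points of a normal ball are joined by a unique shortest curve, namely the radial geodesic. Cover $\gamma([0,1])$ by finitely many such balls; this uses compactness of the image of the continuous curve. Each short subarc of $\gamma$ is length-minimizing between its endpoints, so it coincides with the radial geodesic. By uniqueness of geodesics, these pieces fit together, and $\gamma$ is a smooth $G$-geodesic.

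If needed, one could avoid the Gauss lemma altogether. Alternatively, compute the first variation of $\mathcal E_G$ in a chart and show that the Euler--Lagrange equation holds weakly. Bootstrapping with the smooth Christoffel map would then give $\gamma\in C^\infty$.
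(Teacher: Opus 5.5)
Your proposal is correct and follows the same route the paper indicates for this result, which it quotes from Bauer--Maor--Wirth rather than reproving: (a) and (b) give metric completeness, Lang's theorem upgrades this to geodesic completeness for strong metrics, and (c) enters only in the direct-method argument producing an $H^1$ minimizer, whose smoothness then follows from the local minimizing property of geodesics (or from the Euler--Lagrange equation) for the strong metric. One point should be phrased more carefully: the uniqueness of shortest curves must be invoked on a sufficiently small uniformly (totally) normal neighbourhood, obtained from the inverse function theorem applied to $(x,v)\mapsto(x,\exp_x v)$, and not on an arbitrary normal ball, where uniqueness can fail.
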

	
	We note that, in the infinite-dimensional setting considered in this article, the Hopf--Rinow theorem is famously false in general~\cite{Atkin97,HopfrinowfalseEkeland}. Thus, the existence of minimizing geodesics is not an automatic consequence of metric completeness and has to be proved separately. Indeed, condition~\ref{item:abc-c} is not needed for metric completeness, but only for the existence of minimizing geodesics. This part of the argument is based on the direct method in the calculus of variations, for which weak lower semicontinuity is a standard ingredient; see~\cite[Section~2]{bauer2025completeness}.
	
	We close this section by recalling the global topological input. It is well known that every separable
	Hilbert manifold is parallelizable~\cite{BurgheleaKuiper1969}. Hence the
	open-embedding theorem of Eells--Elworthy~\cite[Theorem~1A]{EellsElworthy1970}
	yields the following result.
	
	\begin{theorem}[Eells--Elworthy]
		\label{thm:all_HM_are-open-subsets-of-HS}
		Let $\cH$ be an infinite-dimensional separable real Hilbert space, and let
		$\cM$ be a smooth separable metrizable manifold modeled on $\cH$. Then there
		are an open subset $\cU\subset\cH$ and a diffeomorphism
		\(\Phi\colon\cM\to\cU\).
	\end{theorem}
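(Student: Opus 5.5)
This statement is a direct citation, so my plan is to assemble it from two classical ingredients rather than prove anything new. First I show that \(\cM\) is parallelizable. Then I apply the open-embedding theorem \cite[Theorem~1A]{EellsElworthy1970}, which says that a parallelizable smooth separable metrizable manifold modeled on \(\cH\) is diffeomorphic to an open subset of \(\cH\). I would still spell out the mechanism behind that theorem, to make clear where separability and metrizability enter.

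\emph{Step 1: parallelizability.} The tangent bundle \(T\cM\) is a smooth vector bundle with fibre \(\cH\) and structure group \(GL(\cH)\). By Kuiper's theorem, \(GL(\cH)\) is contractible in the norm topology. Since \(\cM\) is metrizable, it is paracompact, so numerable bundles over it are classified by homotopy classes of maps into the classifying space \(BGL(\cH)\). That space is contractible, so \(T\cM\cong\cM\times\cH\) continuously. Smoothing the trivializing frame, using smooth partitions of unity (which exist on separable Hilbert manifolds) together with the openness of \(GL(\cH)\) in \(L(\cH)\), makes the trivialization smooth. This is the content of \cite{BurgheleaKuiper1969}.

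\emph{Step 2: a homotopy-equivalent open model.} Because \(\cM\) is separable and metrizable, it embeds as a closed smooth submanifold of \(\cH\); this is where the infinite dimension of \(\cH\) is used, since \(\cH\cong\cH\times\cH\). Take an open tubular neighbourhood \(\cU_0\subset\cH\) of this submanifold. Then \(\cU_0\) deformation retracts onto \(\cM\), so \(\cU_0\simeq\cM\), and \(\cU_0\) is trivially parallelizable as an open subset of \(\cH\).

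\emph{Step 3: classification.} I then invoke the classification of separable \(\cH\)-manifolds due to Burghelea--Kuiper, Eells--Elworthy and Moulis. Two such manifolds are diffeomorphic precisely when they are homotopy equivalent and the equivalence pulls back the tangent bundle class. By Step 1 both tangent bundles are trivial, so the homotopy equivalence \(\cM\simeq\cU_0\) is covered by a bundle isomorphism. The classification then yields a diffeomorphism \(\Phi\colon\cM\to\cU:=\cU_0\).

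\emph{Main obstacle.} The substantive step is Step 3. Upgrading a homotopy equivalence to a diffeomorphism needs the stability result \(\cM\cong\cM\times\cH\) and an infinite-dimensional h-cobordism/absorption argument. I would not reprove this; I would cite \cite[Theorem~1A]{EellsElworthy1970} for it, so that the proof of the statement as used in the paper reduces to checking its hypotheses via Step 1.
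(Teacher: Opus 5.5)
Your proposal is correct and takes the paper's route: the paper also gets parallelizability from \cite{BurgheleaKuiper1969} and then applies the open-embedding theorem \cite[Theorem~1A]{EellsElworthy1970}, with no further argument. Your Steps~2--3 are a separate derivation that goes through the Burghelea--Kuiper classification (homotopy-equivalent separable Hilbert manifolds are diffeomorphic), which is a different theorem from Theorem~1A and should be cited as such; since you end up invoking Theorem~1A directly, you can drop them or keep them only as a remark.
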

	\section{Conformal Completion and Endpoint Minimization}\label{sec:conformal Hopf-Rinow}
	The following theorem is the main theoretical advance of this article. It
	shows that, on a separable Hilbert manifold, the conformal class of every
	smooth strong Riemannian metric contains a metric satisfying the full
	Hopf--Rinow conclusion.
	
	\begin{theorem}[=\Cref{thm:intro-full-hopf-rinow}]\label{thm:main-theorem}
		Let \(\cM\) be a separable Hilbert manifold, and let \(G\) be a smooth strong
		Riemannian metric on \(\cM\). Then the conformal class of \(G\) contains a
		smooth strong Riemannian metric that is metrically complete, geodesically
		complete, and such that every two points in the same connected component of
		\(\cM\) are joined by a length-minimizing geodesic.
	\end{theorem}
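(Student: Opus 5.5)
By \Cref{thm:all_HM_are-open-subsets-of-HS} we may fix a diffeomorphism \(\Phi\colon\cM\to\cU\) onto an open subset \(\cU\subset\cH\). Pushing \(G\) forward gives a smooth strong metric on \(\cU\). Conformal factors pull back to conformal factors, and all three global properties are diffeomorphism invariant. So it suffices to treat \(\cM=\cU\subset\cH\) open and \(G\) a smooth strong metric on \(\cU\). The plan is to construct a smooth function \(\lambda\colon\cU\to(0,\infty)\) such that \(\tilde G=\lambda G\) satisfies conditions \ref{item:abc-a}--\ref{item:abc-c} of \Cref{thm:abstract_completeness}, which then gives the conclusion directly. \(\tilde G\) is automatically smooth and strong.

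\textbf{Step 1: comparison functions.} Define
\[
m(x):=\inf_{\norm h=1}G_x(h,h),\qquad M(x):=\sup_{\norm h=1}G_x(h,h).
\]
These are positive and locally bounded above and below, since \(G\) is strong and continuous. Also set \(d(x):=\dist_{\cH}(x,\cH\setminus\cU)\), with \(d\equiv\infty\) if \(\cU=\cH\). Using smooth partitions of unity on the separable Hilbert space \(\cH\), which admits smooth bump functions, choose a smooth \(\lambda\) with
\[
\lambda(x)\ \ge\ \frac{1+\norm{x}^2+d(x)^{-2}}{m(x)}.
\]
We want \(\lambda\) to grow fast toward \(\partial\cU\) and toward infinity, so that \(\tilde G_x(h,h)\ge(1+\norm x^2+d(x)^{-2})\norm h^2\).

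Condition~\ref{item:abc-a} then holds globally with \(C=1\). Condition~\ref{item:abc-b} follows from a barrier estimate: along any curve from a base point, the \(\tilde G\)-length dominates \(\int\norm{\dot\gamma}/d(\gamma)\) and \(\int\norm{\dot\gamma}\norm{\gamma}\). Hence points of a \(\tilde G\)-ball satisfy \(\log(1/d)\) bounded and \(\norm x\) bounded. So a \(\tilde G\)-ball stays at positive \(\cH\)-distance from \(\cH\setminus\cU\), and its \(\cH\)-closure lies in \(\cU\).

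\textbf{Step 2: weak lower semicontinuity via duality (main obstacle).} Condition~\ref{item:abc-c} is the hard step. Let \(\gamma_n\rightharpoonup\gamma\) in \(H^1_{p,q}\) with \(\liminf\mathcal E_{\tilde G}(\gamma_n)<\infty\). Then \(\gamma_n\to\gamma\) uniformly, since \(H^1\hookrightarrow C^0\) is compact on \([0,1]\). The finite energy bound together with the barrier of Step 1 keeps the \(\gamma_n\) in a fixed \(\tilde G\)-ball, hence at uniform distance from \(\partial\cU\). It follows that \(\gamma\) also lies in \(\cU\), so \(\mathcal E_{\tilde G}(\gamma)\) is finite.

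For lower semicontinuity, write the fiberwise Fenchel dual
\[
\tilde G_x(v,v)=\sup_{\xi\in\cH}\bigl(2\ip{\xi}{v}-\tilde G_x^{-1}(\xi,\xi)\bigr),
\]
where \(\tilde G_x^{-1}\) is the inverse quadratic form on \(\cH^*\cong\cH\). Take measurable, and by density continuous, test fields \(\xi\colon[0,1]\to\cH\). Then
\[
\mathcal E_{\tilde G}(\gamma_n)\ge\int_0^1 2\ip{\xi}{\dot\gamma_n}-\tilde G^{-1}_{\gamma_n}(\xi,\xi)\,dt .
\]
The linear term converges by weak convergence of \(\dot\gamma_n\) in \(L^2\). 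For the quadratic term we need
\[
\limsup_n \tilde G^{-1}_{\gamma_n(t)}(\xi,\xi)\le\tilde G^{-1}_{\gamma(t)}(\xi,\xi),
\]
which holds because \(x\mapsto\tilde G_x^{-1}\) is norm-continuous and \(\gamma_n(t)\to\gamma(t)\) strongly, combined with dominated convergence. The domination comes from \(\tilde G^{-1}\le\norm{\cdot}^2\), which follows from the lower bound built into \(\lambda\). Taking the supremum over \(\xi\) recovers \(\mathcal E_{\tilde G}(\gamma)\); the usual measurable-selection/density argument is needed here. This yields \(\liminf\mathcal E_{\tilde G}(\gamma_n)\ge\mathcal E_{\tilde G}(\gamma)\).

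I expect the delicate points to be two. The first is ensuring that the uniform convergence genuinely follows from weak \(H^1\) convergence in infinite dimensions. Compactness of the embedding into \(C^0([0,1],\cH)\) fails when \(\cH\) is infinite-dimensional, so only weak pointwise convergence \(\gamma_n(t)\rightharpoonup\gamma(t)\) is available. In that case the upper semicontinuity of \(x\mapsto\tilde G_x^{-1}(\xi,\xi)\) must be with respect to the \emph{weak} topology on bounded sets. I would try to force this by the choice of \(\lambda\): make \(\tilde G^{-1}\) very small and nearly constant, e.g.\ \(\tilde G_x^{-1}(\xi,\xi)\le\epsilon(x)\norm\xi^2\) with \(\epsilon\) controlled in a weakly upper semicontinuous way. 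Alternatively, I would replace \(\lambda\) by a weakly lower semicontinuous convex-type barrier dominating \(1/m\), making the inverse form weakly upper semicontinuous. The second delicate point is showing that weak limits of curves with bounded energy avoid \(\partial\cU\), which again needs the barrier to be weakly controlled. This is the core step.

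Finally, \Cref{thm:abstract_completeness} applies to \((\cU,\tilde G)\), and pulling back by \(\Phi\) gives the metric \((\lambda\circ\Phi)\,G\) on \(\cM\).
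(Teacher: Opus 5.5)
Your architecture matches the paper's: the Eells--Elworthy reduction, a conformal barrier for (a) and (b), and fiberwise Fenchel duality for (c). There is, however, a genuine gap at the step that carries the theorem.

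\emph{The compactness claim is false, and your Step 1 factor actually fails (c).} As you suspect, your argument for (c) rests on a false premise. The embedding $H^1([0,1],\cH)\hookrightarrow C^0([0,1],\cH)$ is not compact when $\dim\cH=\infty$. You only get $\gamma_n(t)\rightharpoonup\gamma(t)$, so norm-continuity of $x\mapsto\widetilde G_x^{-1}$ is of no use. Moreover, a factor chosen only as in Step 1 does not satisfy (c). Take $G$ flat, $\cU=\cH\setminus\overline{B(0,1/2)}$, and $\lambda(x)=1+\norm{x}^2+(\norm{x}-1/2)^{-2}$, which meets your lower bound with equality. Let $(e_n)_{n\ge0}$ be orthonormal. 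The loops $\gamma_n(t)=2(1-\sin\pi t)e_0+\sin(\pi t)e_n$ at $2e_0$ stay in $\{\norm{x}\ge\sqrt{0.8}\}$, so their $\widetilde G$-energies are uniformly bounded. They converge weakly in $H^1$ to $\gamma(t)=2(1-\sin\pi t)e_0$, which passes through $0\notin\cU$. The underlying reason is that $\{d\ge\varepsilon\}$ is not weakly closed.

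\emph{What is actually needed.} You need $x\mapsto\widetilde G_x^{-1}(\xi,\xi)$, extended by $0$ off $\cU$, to be sequentially \emph{weakly} upper semicontinuous on all of $\cH$. Your suggested remedies do not give this at interior points. A weakly lower semicontinuous scalar factor only makes $\lambda^{-1}$ weakly upper semicontinuous, while $\ip{A(x)^{-1}\xi}{\xi}$ is merely norm-continuous. An upper bound $\widetilde G_x^{-1}\le\epsilon(x)\norm{\cdot}^2$ says nothing about $\limsup_n\widetilde G_{x_n}^{-1}(\xi,\xi)\le\widetilde G_x^{-1}(\xi,\xi)$ when $x\in\cU$.

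\emph{How the paper supplies the missing mechanism.} It has two pieces.
\begin{itemize}
\item \emph{Exterior points.} The paper builds a smooth majorant $c$ of a prescribed locally bounded function. It extends to a sequentially weakly lower semicontinuous $\overline c\colon\cH\to[1,\infty]$ with $\overline c=+\infty$ exactly off $\cU$. It is built from finite products of $\theta_j(\norm{x-c_j}^2)$ with $\theta_j$ nondecreasing, which are weakly lower semicontinuous because the norm is. This forces $c(x_n)\to\infty$ whenever $x_n\rightharpoonup x\notin\cU$, which settles the exterior case, including the example above.
\item \emph{Interior points.} The paper normalizes $C=sA\ge\id$, sets $D=C^{-1}\le\id$ and $K=\opnorm{C}$, and takes $\widetilde G=e^{F}C$ with $F=c(x)(1+\norm{x}^2)$. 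Suppose $x_n\rightharpoonup x\in\cU$ with $\norm{x_n-x}^2\to r^2$. Then $\norm{x_n}^2\to\norm{x}^2+r^2$, so $\liminf_n F(x_n)\ge F(x)+c(x)r^2$. Thus the inverse form gains a factor $e^{-c(x)r^2}$ proportional to the norm defect.
\end{itemize}
Against this gain, the first-order Taylor term $\ip{D'(x)[x_n-x]z}{z}$ tends to $0$ by weak convergence. So when $r$ is below the Taylor radius $\delta(x)$, the increase of $\limsup_n\ip{D(x_n)z}{z}$ is at most $L(x)K(x)r^2\ip{D(x)z}{z}$. Choosing $c\ge LK$ absorbs this. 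For $r\ge\delta(x)$, the crude bound $D\le\id$ together with $c\delta^2\ge\log K$ suffices.

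Only with this in hand are the dual functionals weakly lower semicontinuous, via reverse Fatou. One must still show that their supremum is $+\infty$ on curves leaving $\cU$; the paper does this with truncated maximizing fields and the $d^{-2}$ barrier. Without this norm-defect/Taylor mechanism, your proposal locates the difficulty but does not resolve it.
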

	
	We reduce this global statement to a local assertion on an open subset of a
	Hilbert space.
	
	\begin{theorem}\label{thm:local-conformal}
		Let \(\cH\) be an infinite-dimensional separable real Hilbert space, let
		\(\cU\subset\cH\) be open, and let \(G\) be a smooth strong Riemannian
		metric on \(\cU\). Then there exists a smooth function \( \rho\colon\cU\longrightarrow(0,\infty)\) such that the conformal metric \(\widetilde G:=\rho G\) has the following properties:
		\begin{enumerate}[label=\textup{(\roman*)}]
			\item \label{it:1_local-conformal} \((\cU,d_{\widetilde G})\) is metrically complete;
			\item \label{it:2_local-conformal} \((\cU,\widetilde G)\) is geodesically complete;
			\item \label{it:3_local-conformal} every two points in the same connected component of \(\cU\) are
			joined by a length-minimizing \(\widetilde G\)-geodesic.
		\end{enumerate}
	\end{theorem}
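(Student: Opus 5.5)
We will verify the three hypotheses of \Cref{thm:abstract_completeness} for a suitable conformal metric \(\widetilde G=\rho G\). The factor \(\rho\) will be chosen as the pointwise maximum of two requirements: a \emph{barrier} that blows up towards \(\partial\cU\) and at Hilbert-norm infinity, and an \emph{inverse-metric control} that turns the Hilbert norm into a lower bound for \(\widetilde G\).

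\textbf{Construction of \(\rho\).} For \(x\in\cU\) set \(\lambda(x):=\opnorm{G_x^{-1}}\), the operator norm of the Riesz map, so that \(\langle h,h\rangle_{\cH}\le\lambda(x)G_x(h,h)\). Since \(G\) is strong, \(\lambda\) is finite and locally bounded. By the smooth partition-of-unity argument that is available on separable Hilbert spaces (they admit smooth bump functions), we will choose a smooth \(\rho\) with
\[
\rho(x)\;\ge\;\max\Bigl\{\,1+\lambda(x),\ \bigl(1+\|x\|_{\cH}^2+\dist(x,\cH\setminus\cU)^{-2}\bigr)\lambda(x)\Bigr\}.
\]
Then \(\widetilde G_x(h,h)\ge\langle h,h\rangle_{\cH}\), so \(\widetilde G\) dominates the Hilbert metric. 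Condition \ref{item:abc-a} holds with \(C=1\) on every ball.

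\textbf{Condition \ref{item:abc-b}.} The barrier part gives \(\widetilde G_x(h,h)\ge\phi(x)^2\|h\|^2_{\cH}\) with \(\phi(x)\approx 1/\dist(x,\partial\cU)+\|x\|_{\cH}\). A radial comparison then shows that every \(\widetilde G\)-bounded set is Hilbert-bounded and stays a positive distance from \(\partial\cU\). Consequently its Hilbert closure lies in \(\cU\). Metric completeness follows from this too, but we do not need to argue it separately.

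\textbf{Condition \ref{item:abc-c}: the main obstacle.} Take \(\gamma_n\rightharpoonup\gamma\) weakly in \(H^1_{p,q}\) with \(\liminf\mathcal E(\gamma_n)<\infty\). Since \(H^1\) embeds in \(C^0\) weakly, we get \(\gamma_n(t)\rightharpoonup\gamma(t)\) weakly in \(\cH\) pointwise, and the barrier bound keeps the curves in a \(\widetilde G\)-ball. However, \(\widetilde G_x\) depends only norm-continuously on \(x\), so the energy is not obviously weakly lsc.

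The plan is Fenchel duality. We write
\[
\widetilde G_x(v,v)=\sup_{\xi\in\cH}\bigl(2\langle\xi,v\rangle_{\cH}-\widetilde G_x^{-1}(\xi,\xi)\bigr),
\]
and hence
\[
\mathcal E(\gamma)=\sup_{\xi}\int_0^1\bigl(2\langle\xi,\dot\gamma\rangle-\widetilde G^{-1}_{\gamma}(\xi,\xi)\bigr)\,dt
\]
over bounded measurable (or smooth) \(\xi\colon[0,1]\to\cH\). The linear term passes to weak limits. So it suffices to have \(\limsup_n\widetilde G^{-1}_{\gamma_n(t)}(\xi,\xi)\le\widetilde G^{-1}_{\gamma(t)}(\xi,\xi)\), and then to apply reverse Fatou using the uniform bound \(\widetilde G^{-1}\le\|\xi\|^2\). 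That is, \(x\mapsto\rho(x)^{-1}G_x^{-1}(\xi,\xi)\) must be weakly sequentially upper semicontinuous on \(\widetilde G\)-bounded sets.

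Since \(G^{-1}\) itself need not have this property, we will choose \(\rho\) so that this expression is dominated by a \emph{weakly continuous} envelope. First enumerate a dense sequence \(e_k\) and a locally finite cover of \(\cU\) by Hilbert balls. Then make \(\rho^{-1}G^{-1}_x(\xi,\xi)\) agree, up to an error \(\varepsilon(x)\|\xi\|^2\), with a finite-rank expression built from finitely many weakly continuous coordinates \(\langle x,e_k\rangle\). The error \(\varepsilon(x)\) must tend to zero in a way that is itself weakly usc, e.g.\ \(\varepsilon\) a decreasing function of \(\|x\|_{\cH}\), which is weakly lsc. I expect this approximation step, making \(\rho^{-1}G^{-1}\) weakly usc while retaining smoothness and the barrier, to be the hardest part. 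The first thing to try is \(\rho(x)=\psi(x)\lambda(x)\) with \(\psi\) a smooth function that is large and increasing in \(\|x\|\) and in \(1/\dist\). Then \(\rho^{-1}G^{-1}(\xi,\xi)\le\|\xi\|^2/\psi(x)\), and since \(\psi\) is built from weakly lsc quantities, \(1/\psi\) is weakly usc. We would then compare, at weak limit points, against the lower bound at \(\gamma(t)\); if needed, the supremum will be restricted to \(\xi\) supported where the comparison is valid.

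Once \ref{item:abc-a}--\ref{item:abc-c} are established, \Cref{thm:abstract_completeness} yields \ref{it:1_local-conformal}--\ref{it:3_local-conformal}. \Cref{thm:main-theorem} then follows by transporting through \Cref{thm:all_HM_are-open-subsets-of-HS}.
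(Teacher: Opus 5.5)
You have the paper's architecture. You verify \ref{item:abc-a}--\ref{item:abc-c} of \Cref{thm:abstract_completeness}, write the energy density as a Fenchel supremum, let the linear term pass to the weak limit, and use reverse Fatou to reduce \ref{item:abc-c} to weak sequential upper semicontinuity of $Q_\xi(x):=\rho(x)^{-1}G_x^{-1}(\xi,\xi)$. Your treatment of \ref{item:abc-a} and \ref{item:abc-b} is fine. However, the step you flag as hardest is not carried out, and your fallback cannot work.

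\emph{First gap: the inverse form is not made weakly usc.} A bound $Q_\xi\le\norm{\xi}^2/\psi$ with $1/\psi$ weakly usc only yields $\limsup_n Q_\xi(x_n)\le\norm{\xi}^2/\psi(x)$. You need $\le Q_\xi(x)$, which can be smaller by the condition number $\opnorm{A(x)}\opnorm{A(x)^{-1}}$, where $G_x(u,v)=\ip{A(x)u}{v}$. A larger factor of the same type does not help. Take $\psi=c(1+\norm{x}^2)$ with $c$ weakly lsc, and $x_n\rightharpoonup x$ with $\norm{x_n-x}^2\to r^2$. Then one only gets $\liminf_n\psi(x_n)/\psi(x)\ge 1+r^2/(1+\norm{x}^2)$. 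This gain does not improve as $c$ grows, while the $\lambda$-normalized inverse forms at $x_n$ and at $x$ can differ by up to that condition number.

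The missing idea is to put the weakly lsc quantity in an exponent: $\rho=s\,e^{F}$ with $s\ge\opnorm{A^{-1}}$ and $F=c(1+\norm{x}^2)$. The norm defect then gives $\liminf_n F(x_n)\ge F(x)+c(x)r^2$, i.e.\ a gain $e^{-c(x)r^2}$ whose strength is tunable through $c$. This gain is played off against a second-order Taylor expansion of $D=(sA)^{-1}$ at $x$:
\begin{itemize}
\item the first-order term $\ip{D'(x)[x_n-x]z}{z}$ vanishes in the limit, since it is a bounded linear functional of $x_n-x$;
\item the remainder is at most $L(x)r^2\norm{z}^2\le L(x)K(x)r^2\ip{D(x)z}{z}$, where $K=\opnorm{sA}$.
\end{itemize}
Choosing $c\ge LK$ handles $r<\delta(x)$ via $e^{-t}(1+t)\le 1$, and $c\,\delta^2\ge\log K$ handles $r\ge\delta(x)$; see \Cref{lem:taylor} and \Cref{lem:inverse-usc}. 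Your finite-rank approximation does not replace this. An additive error $\varepsilon(x)\norm{\xi}^2$ is incompatible with exact upper semicontinuity unless it is negligible against $Q_\xi(x)$. Also, $G_x^{-1}$ is in general not uniformly close to any finite-rank form.

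\emph{Second gap: the barrier does not control weak limits.} The function $x\mapsto\dist(x,\cH\setminus\cU)^{-1}$ is not weakly lsc, and \ref{item:abc-c} genuinely fails for metrics meeting all your stated requirements. Take $\cU=\cH\setminus\{0\}$, $G=\ip{\cdot}{\cdot}$, and $\rho(x)=1+\norm{x}^2+\norm{x}^{-2}$; this meets your lower bound for $\rho$ with equality. Let $e_0,e_1,\dots$ be orthonormal, $p=q=2e_0$, and $\varphi\colon[0,1]\to[0,1]$ smooth with $\varphi(0)=\varphi(1)=0$ and $\varphi(1/2)=1$. The curves $\gamma_n:=(1-\varphi)p+\varphi e_n$ satisfy $4/5\le\norm{\gamma_n}^2\le 4$, so $\rho(\gamma_n)\le 25/4$. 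Since $\norm{\dot\gamma_n}^2=5\dot\varphi^2$, their $\rho G$-energies are uniformly bounded. Yet $\gamma_n-(1-\varphi)p=\varphi e_n\rightharpoonup 0$, so $\gamma_n\rightharpoonup\gamma:=(1-\varphi)p$ in $H^1_{p,q}([0,1],\cH)$. Since $\gamma(1/2)=0\notin\cU$, the extended energy jumps to $+\infty$ in the limit. Your remark that the $\gamma_n$ stay in a $\widetilde G$-ball does not help, because metric balls are not weakly closed.

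The paper avoids this with \Cref{lem:majorant}. There $c$ is the restriction of a sequentially weakly lsc $\overline c\colon\cH\to[1,\infty]$ with $\overline c=+\infty$ exactly on $\cH\setminus\cU$, built from $\theta_j(\norm{x-c_j}^2)$ over a countable cover by balls. This forces $c(x_n)\to\infty$ whenever $x_n\rightharpoonup x\notin\cU$. In the example, $c(e_n)\to\infty$ even though $\dist(e_n,\cH\setminus\cU)=1$. Hence $Q_z$, extended by zero off $\cU$, is weakly usc at such points as well. You then still need \Cref{lem:energy-identification}: via truncated dual fields and the $d^{-2}$ barrier, the dual supremum equals $+\infty$ on curves that leave $\cU$.
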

	
	Before proving \Cref{thm:local-conformal}, we show that it implies the global
	result.
	
	\begin{proof}[Proof of \Cref{thm:main-theorem}]
		Let \(G\) be a smooth strong Riemannian metric on \(\cM\). By
		\Cref{thm:all_HM_are-open-subsets-of-HS}, there exist an infinite-dimensional
		separable real Hilbert space \(\cH\), an open subset \(\cU\subset\cH\), and a
		diffeomorphism \(\Phi\colon\cM\longrightarrow\cU\).
		
		Push \(G\) forward to \(\cU\) via \(\Phi\), that is, define \(  G_{\cU}:=(\Phi^{-1})^*G.\) This is a smooth strong Riemannian metric on \(\cU\). Applying
		\Cref{thm:local-conformal} to \((\cU,G_{\cU})\), we obtain a smooth function
		\(\rho_{\cU}\colon\cU\to(0,\infty)\) such that
		\((\cU,\rho_{\cU}G_{\cU})\) is metrically complete, geodesically complete,
		and any two points in the same connected component are joined by a
		length-minimizing geodesic.
		
		Define \( \rho:=\rho_{\cU}\circ\Phi\), then 
		\[
		\Phi^*\left(\rho_{\cU}G_{\cU}\right)
		=(\rho_{\cU}\circ\Phi)G
		=\rho G.
		\]
		Thus \(\Phi\) is an isometry between \((\cM,\rho G)\) and
		\((\cU,\rho_{\cU}G_{\cU})\). Since metric completeness, geodesic completeness,
		and the existence of length-minimizing geodesics are preserved by isometries,
		\((\cM,\rho G)\) has all the properties asserted in
		\Cref{thm:main-theorem}.
	\end{proof}
The rest of the section is devoted to the proof of \Cref{thm:local-conformal}.
	\subsection{Proof of \texorpdfstring{\Cref{thm:local-conformal}}{Theorem 3.2}}
	\label{sec:local-conformal-proof}
	For the proof of \Cref{thm:local-conformal}, it suffices to construct, for
	every smooth strong Riemannian metric $G$ on an open subset
	\(\cU\subseteq\cH\), a smooth function $\rho$ such that
	\((\cU,\rho G)\) satisfies the hypotheses of the abstract completeness
	theorem~\Cref{thm:abstract_completeness}.
	
	The proof is divided into three steps. We first construct a conformal
	metric with the required barrier estimates. We then verify the hypotheses of
	\Cref{thm:abstract_completeness}. Finally, we apply that theorem to conclude
	\Cref{thm:local-conformal}.
	\subsubsection{Construction of the conformal metric}
	\label{sec:construction}
	
	We construct a conformal change of \(G\) with two features. It will create a
	barrier that makes the boundary of \(\cU\) increasingly expensive to approach,
	and it will provide quantitative control of the inverse metric for the later
	weak-convergence argument.
	
	First, recall that every smooth strong Riemannian metric \(G\) on \(\cU\) is
represented by an operator-norm smooth field of bounded operators, called
its \emph{inertia operator},
	\[
	A\colon\cU\longrightarrow\mathcal L(\cH),
	\]
	such that
	\[
	G_x(u,v)=\ip{A(x)u}{v}
	\qquad\bigl((x,u,v)\in\cU\times\cH\times\cH\bigr).
	\]
	We shall modify this inertia operator by a smoothly varying positive scalar
	factor. The factor will be chosen in a controlled way: the resulting metric
	will grow near the boundary of \(\cU\), while its inverse will retain the
	weak-continuity properties needed later in the energy argument. The following
	proposition makes this controlled conformal deformation precise.
	\begin{proposition}[Controlled conformal deformation]
		\label{prop:conformal-metric}
		There exist smooth maps
		\[
		F\colon\cU\longrightarrow[0,\infty),
		\qquad
		D,B\colon\cU\longrightarrow\mathcal L(\cH),
		\qquad
		\rho\colon\cU\longrightarrow(0,\infty),
		\]
		with the following properties:
		\begin{enumerate}[label=\textup{(\roman*)}]
			\item \label{it:1-controlled-construction}
			For every \(x\in\cU\), the operators \(D(x)\) and \(B(x)\) are bounded,
			positive, self-adjoint, and boundedly invertible. Moreover,
			\[
			0<D(x)\leq\id,
			\qquad
			B(x)=e^{F(x)}D(x)^{-1}.
			\]
			
			\item \label{it:2-controlled-construction}
			The bilinear form
			\[
			\widetilde G_x(u,v):=\ip{B(x)u}{v}
			=\rho(x)G_x(u,v),
			\qquad (x,u,v)\in\cU\times\cH\times\cH,
			\]
			defines a smooth strong Riemannian metric on \(\cU\).
			
			\item \label{it:3-controlled-construction}
			The metric coefficient satisfies
			\[
			B(x)\geq\id
			\qquad(x\in\cU).
			\]
			
			\item \label{it:4-controlled-construction}
			If \(\cU\neq\cH\), then, with
			\[
			d(x):=\dist_{\cH}(x,\cH\setminus\cU),
			\]
			one has
			\[
			B(x)\geq d(x)^{-2}\id
			\qquad(x\in\cU).
			\]
		\end{enumerate}
	\end{proposition}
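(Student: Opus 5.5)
We will take \(\rho=e^{F}\psi\), where \(\psi\colon\cU\to(0,\infty)\) is a smooth function dominating both the inverse inertia operator and the barrier, and then set \(B:=\rho A\) and \(D:=e^{F}B^{-1}\). The required identities are then automatic. Indeed, \(B(x)=\rho(x)A(x)\) is bounded, positive, self-adjoint and boundedly invertible, because \(G\) is strong and so \(A(x)\) is. Also \(D=e^F\rho^{-1}A^{-1}=\psi^{-1}A^{-1}\), so the conditions \(0<D\le\id\) and \(B\ge\id\) reduce to inequalities between scalar functions and operator norms. Smoothness of \(D\) follows because inversion is smooth on the open set of invertible operators and \(A\) is operator-norm smooth. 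So \ref{it:1-controlled-construction} and \ref{it:2-controlled-construction} are formal once \(\psi\) and \(F\) are smooth. Strongness of \(\rho G\) holds because \(\rho>0\) is a scalar.

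\textbf{Choice of \(F\).} The factor \(e^F\) is meant to record the barrier. The later Fenchel-duality step needs it, but the proposition imposes nothing on \(F\) beyond smoothness and \(F\ge 0\). So we take \(F\ge0\) smooth, for instance \(F\equiv 0\), or a smooth function growing near \(\partial\cU\) if that helps later. Everything below is stated for an arbitrary smooth \(F\ge0\).

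\textbf{Key estimate.} Write \(a(x):=\opnorm{A(x)^{-1}}\). This function is continuous and positive, and \(A(x)\ge a(x)^{-1}\id\). With \(\psi\) still to be chosen, the three requirements read as follows:
\[
D\le\id \iff \psi\ge \lambda_{\max}(A^{-1}) = a,
\qquad
B\ge\id \;\Leftarrow\; e^F\psi\,a^{-1}\ge 1,
\qquad
B\ge d^{-2}\id \;\Leftarrow\; e^F\psi\, a^{-1}\ge d^{-2}.
\]
Since \(F\ge0\), it suffices to have
\[
\psi(x)\ \ge\ a(x)\,\max\{1,d(x)^{-2}\},
\]
where the \(d^{-2}\) term is dropped if \(\cU=\cH\). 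The right-hand side is continuous on \(\cU\), because \(d\) is \(1\)-Lipschitz and positive on \(\cU\).

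\textbf{Main obstacle.} The real difficulty is smoothing: we need a smooth function on the open set \(\cU\) of an infinite-dimensional Hilbert space that dominates a given continuous function. Finite-dimensional mollification is not available. Instead we use the fact that separable Hilbert spaces admit smooth partitions of unity subordinate to any open cover, because the norm is smooth away from \(0\). Cover \(\cU\) by open sets \(V_i\) on which the continuous function \(g:=a\max\{1,d^{-2}\}\) satisfies \(g\le c_i:=2g(x_i)\), using continuity at the points \(x_i\). Take a smooth locally finite partition of unity \(\{\chi_i\}\) subordinate to \(\{V_i\}\) and set \(\psi:=\sum_i c_i\chi_i+1\). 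This function is smooth, positive, and satisfies \(\psi\ge g\).

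The only subtle point is that \(A^{-1}\) is smooth in operator norm, so that \(D\) is smooth. This holds because \(A\) is smooth and inversion on the open set of invertible operators is analytic.
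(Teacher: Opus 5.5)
Your argument correctly proves the proposition as literally stated, and its skeleton matches the paper's. You normalize the inertia operator by a smooth scalar majorant of $\opnorm{A^{-1}}$ (your $\psi$, the paper's $s$), set $D=(\psi A)^{-1}$, $B=e^{F}\psi A$ and $\rho=e^{F}\psi$, and read off (i)--(iv) from $A\geq\opnorm{A^{-1}}^{-1}\id$.

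The routes differ in where the barrier sits. You put $d^{-2}$ into the prefactor $\psi$, built from abstract smooth partitions of unity, and leave $F\geq0$ free, even $F\equiv0$. The paper puts it into the exponent: $F=c\,(1+\norm{x}^2)$, where $c$ is the majorant from \Cref{lem:majorant} of $\max\{1,LK,\log K/\delta^2,d^{-2}\}$, with $K=\opnorm{C}$ and $L,\delta$ the Taylor data of \Cref{lem:taylor} for $D$. For the normalization $s$ itself, your partition-of-unity majorant would do just as well; the difference that matters is in $F$.

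Your route is shorter and needs only standard smoothing. However, it delivers only what (i)--(iv) record. \Cref{lem:inverse-usc}, which takes ``$F,D,B$ as in \Cref{prop:conformal-metric}'', uses features of the paper's construction that the statement does not record:
the sequentially weakly lower semicontinuous extension $\overline c$ with $\overline c=+\infty$ off $\cU$; the bounds \eqref{eq:c-boundary}; and the norm-defect gain $c(x)r^2$ in \eqref{eq:F-gap}, which comes from the factor $1+\norm{x}^2$.

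Data satisfying only (i)--(iv) do not suffice for the later argument. Take $\cU=\cH\setminus\{0\}$, flat $G$, $F\equiv0$, and the admissible choice $\psi=2+\norm{x}^{-2}$. Let $e_k$ be orthonormal and $\phi$ a bump with $\phi(0)=\phi(1)=0$, $\phi(1/2)=1$. The curves $\gamma_n(t)=(1-2t)e_0+\phi(t)e_n$ all have the same finite energy. Yet they converge weakly in $H^1$ to a curve through $0\notin\cU$, so condition (c) of \Cref{thm:abstract_completeness} fails.

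The underlying reason is that a barrier expressed through $d$ cannot detect weak limits: $d(e_n)=1$ while $e_n\rightharpoonup0$. This is exactly why the paper builds $c$ via \Cref{lem:majorant} and places it in the exponent.
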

	
	\begin{remark}[Geometric consequence]
		By \ref{it:4-controlled-construction} in \Cref{prop:conformal-metric} one has that \(\widetilde G\) is a conformal barrier metric: it dominates the ambient
		Hilbert norm and blows up towards the boundary of \(\cU\).
	\end{remark}
	
	The proof of \Cref{prop:conformal-metric} uses two ingredients. The first
	provides smooth majorants and a barrier against weak convergence to the
	complement of \(\cU\). We begin with the first ingredient: 
	\begin{lemma}[Smooth weakly lower semicontinuous majorant]
		\label{lem:majorant}
		Let $b\colon \cU\to[1,\infty)$ be a locally bounded function. Then there exists a
		function
		\[
		\overline c\colon \cH\longrightarrow[1,\infty]
		\]
		with the following properties:
		\begin{enumerate}[label=\textup{(\roman*)}]
			\item \label{it:1-majorant}
			$c:=\overline c|_{\cU}$ is smooth and $c(x)\geq b(x)$ for every
			$x\in \cU$;
			\item \label{it:2-majorant}
			$\overline c(x)<\infty$ exactly when $x\in \cU$;
			\item \label{it:3-majorant}
			$\overline c$ is sequentially weakly lower semicontinuous on $\cH$.
		\end{enumerate}
	\end{lemma}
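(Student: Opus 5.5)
The plan is to build \(\overline c\) as a sum of two pieces: a weakly lower semicontinuous barrier that is infinite exactly off \(\cU\), and a smooth majorant of \(b\). The majorant piece should be a locally finite sum of smooth bumps that are themselves weakly lower semicontinuous. The difficulty is that bounded closed subsets of \(\cH\setminus\cU\) need not be weakly closed. So "being near the complement" cannot be detected directly by weak limits; we must use functions that are convex in the Hilbert variable, since convex continuous functions are weakly lower semicontinuous.

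\textbf{Step 1 (barrier).} If \(\cU=\cH\) skip this step. Otherwise take \(d(x)=\dist_{\cH}(x,\cH\setminus\cU)\), which is \(1\)-Lipschitz, and set \(\beta(x)=\sup_{y\in\cH\setminus\cU}\phi_y(x)\). Here \(\phi_y\) is chosen convex and smooth on \(\cH\setminus\{y\}\), with \(\phi_y(x)\to\infty\) as \(x\to y\); for instance \(\phi_y(x)=-\log\|x-y\|\) is not convex, so instead I would use an inf-convolution / distance-based approach. Concretely, I would try
\[
\beta(x)=\sum_{k\ge1}\psi_k(x),
\]
where \(\psi_k\) is a smooth convex function that is finite on \(\cH\) and vanishes on \(\{d\ge 2^{-k}\}\)'s relevant region. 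That fails as stated, because \(\{d\ge r\}\) is not convex.

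The fallback I expect to work is to exploit sequential weak lower semicontinuity only along bounded sequences. A weakly convergent sequence is bounded, and on a bounded set one can use the following standard trick (as in Bauer--Maor--Wirth): choose a countable dense set \(\{y_j\}\subset\cH\setminus\cU\) and radii \(r_j=d(y_j)\)-type data, and define \(\overline c\) as a supremum of countably many weakly continuous functions. Each such function depends only on finitely many coordinates, via orthogonal projections \(P_n\) onto finite-dimensional subspaces \(E_n\), and weak convergence implies norm convergence after applying \(P_n\). A supremum of weakly continuous functions is weakly lsc. So the core of the construction is
\[
\overline c(x)=\sup_{n}\, g_n(P_nx,\|x\|)\quad\text{or similar},
\]
with \(g_n\) continuous on \(E_n\times\bR\) and nondecreasing in the norm variable. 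Since \(x\mapsto\|x\|\) is weakly lsc and \(g_n\) is monotone in it, each term is weakly lsc.

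\textbf{Step 2 (finiteness exactly on \(\cU\)).} Enumerate the complement via a dense sequence \(y_j\), and put
\[
h_{n}(x)=\max_{j\le n}\theta\bigl(\|P_n(x-y_j)\|^2+(\|x\|^2-\|P_nx\|^2)\text{-correction}\bigr).
\]
The goal is for the pieces to blow up precisely at points of \(\cH\setminus\cU\), while staying locally finite near each point of \(\cU\). Local finiteness on \(\cU\) is what gives smoothness of \(c\): near \(x_0\in\cU\) only finitely many terms are nonconstant, so the supremum can be replaced by a smooth maximum (a smooth "soft max" of finitely many smooth functions). To dominate \(b\), add on each ball of a locally finite countable cover of \(\cU\) (Lindelöf) a constant bound \(\sup b\) multiplied by a smooth cutoff, packaged into the same weakly lsc, locally finite form.

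\textbf{Main obstacle.} The hard step is simultaneously getting smoothness on \(\cU\) and weak lower semicontinuity including the value \(+\infty\) at weak limits outside \(\cU\). The key point is this: if \(x_k\rightharpoonup x\notin\cU\), then \(\overline c(x_k)\to\infty\) even though \(x_k\) may stay far from \(\cH\setminus\cU\) in norm. I would handle this by making the blow-up depend on the finite-dimensional projections plus the norm, so that it passes to weak limits. I expect most of the effort to go into verifying this step carefully.
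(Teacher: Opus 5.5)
\textbf{There is a genuine gap.} Your proposal never actually produces a function. Step 1 is abandoned, the formula in Step 2 contains an unspecified ``correction'' term, and the decisive verification is explicitly deferred.

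More seriously, the two concrete mechanisms you do name are incompatible with weak lower semicontinuity. One is a blow-up localized at points $y_j\in\cH\setminus\cU$; the other is a term $(\sup b)\,\psi$ with $\psi$ a cutoff equal to $1$ on a ball and $0$ outside it. Both are decreasing functions of a distance, and such functions are not weakly lsc in infinite dimensions. For an orthonormal sequence $(e_k)$ one has $c+Re_k\rightharpoonup c$ while $\lVert (c+Re_k)-c\rVert=R$. So a bump that is positive at $c$ and vanishes outside $B(c,R)$ violates $\liminf_k f(x_k)\geq f(x)$.

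The same example shows that $\overline c(y)=+\infty$ forces $\overline c(y+Re_k)\to\infty$ for \emph{every} $R>0$. So the blow-up cannot be concentrated near $\cH\setminus\cU$. What is really needed is that each sublevel set $\{\overline c\leq M\}$ be a weakly sequentially closed subset of $\cU$. Your framework of functions $g(Px,\lVert x\rVert)$, nondecreasing in the norm variable, is compatible with this. However, you give no such functions that are infinite off $\cU$, dominate $b$, and are locally finite on $\cU$. Finally, replacing a supremum of infinitely many terms by a local soft maximum does not by itself define a global smooth function.

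\textbf{The missing idea.} The paper builds $\overline c$ from the inside, by exhausting $\cU$ with finite unions of closed balls (which are weakly closed), and it uses a sum rather than a supremum.

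\emph{Construction.} Cover $\cU$ by countably many balls $B(c_j,r_j)$ with $r_j<R_j$, $\overline{B(c_j,R_j)}\subset\cU$, and $b$ bounded on $B(c_j,R_j)$. Choose integers $N_j\geq\max\{j,\sup_{B(c_j,R_j)}b\}$. Take $\theta_j$ smooth, nondecreasing, equal to $0$ on $[0,r_j^2]$ and to $1$ on $[R_j^2,\infty)$. Put $\eta_n(x):=\prod_{N_j\leq n}\theta_j(\lVert x-c_j\rVert^2)$, a finite product, and $\overline c:=1+\sum_n\eta_n$.

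\emph{Verification.}
Each factor is a nondecreasing function of $\lVert x-c_j\rVert$, hence weakly lsc; it is exactly of your form $g(Px,\lVert x\rVert)$. So every finite product and partial sum is weakly lsc, and $\overline c$, as the supremum of its nonnegative partial sums, is weakly lsc. Off $\cU$ every factor equals $1$, so $\overline c=+\infty$. Near a point of $B(c_{j_0},r_{j_0})$ all $\eta_n$ with $n\geq N_{j_0}$ vanish, which gives local finiteness and hence smoothness on $\cU$. If $n<b(x)$, then $x$ lies outside every $B(c_j,R_j)$ with $N_j\leq n$, so $\eta_n(x)=1$ and $c(x)\geq b(x)$.

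Thus domination of $b$ comes from counting the exhaustion stages that $x$ lies outside of, not from bumps localized where $b$ is large.
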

	
	\begin{proof}
		Since \(\cH\) is separable, the open set \(\cU\) is second countable. We
		may therefore choose a countable family of Hilbert balls
		\[
		B(c_j,r_j),
		\qquad j\in\bN,
		\]
		which covers \(\cU\), together with radii \(0<r_j<R_j\) such that
		\[
		\overline{B(c_j,R_j)}\subset\cU
		\]
		and \(b\) is bounded on \(B(c_j,R_j)\). Next, choose integers
		\(N_j\in\bN\) satisfying
		\[
		N_j\geq j,
		\qquad
		N_j\geq\sup_{B(c_j,R_j)}b.
		\]
		For every \(j\in\bN\), choose a smooth nondecreasing function
		\[
		\theta_j\colon[0,\infty)\longrightarrow[0,1]
		\]
		such that
		\[
		\theta_j(t)=0
		\quad\text{for }t\leq r_j^2,
		\qquad
		\theta_j(t)=1
		\quad\text{for }t\geq R_j^2.
		\]
		For \(n\in\bN\), define the index set
		\[
		J_n:=\{j\in\bN:N_j\leq n\}.
		\]
		This set is finite. Indeed, if \(j\in J_n\), then \(N_j\leq n\), and
		hence \(j\leq n\) because \(N_j\geq j\). Thus
		\(J_n\subset\{1,\ldots,n\}\). Next, for every \(n\in\bN\), define
		\begin{equation}\label{eq:eta-n}
			\eta_n\colon\cH\longrightarrow[0,1],
			\qquad
			\eta_n(x):=
			\prod_{j\in J_n}\theta_j\bigl(\norm{x-c_j}^2\bigr),
		\end{equation}
		where the empty product is interpreted as \(1\). Each \(\eta_n\) is smooth
		on \(\cH\). Indeed, the map
		\(x\mapsto\norm{x-c_j}^2\) is smooth, and \(J_n\) is finite.
		
		The family \((\eta_n)_{n\in\bN}\) is locally finite on \(\cU\). To see
		this, fix \(x\in\cU\) and choose \(j_0\in\bN\) such that
		\(x\in B(c_{j_0},r_{j_0})\). There is a neighbourhood \(V\) of \(x\) such
		that \(V\subset B(c_{j_0},r_{j_0})\). Hence
		\[
		\theta_{j_0}\bigl(\norm{y-c_{j_0}}^2\bigr)=0
		\qquad(y\in V).
		\]
		If \(n\geq N_{j_0}\), then \(j_0\in J_n\), and therefore
		\[
		\eta_n|_V=0.
		\]
		Thus only finitely many \(\eta_n\) are nonzero on \(V\). We now define
		\begin{equation}\label{eq:cbar}
			\overline c\colon\cH\longrightarrow[1,\infty],
			\qquad
			\overline c(x):=
			1+\sum_{n=1}^{\infty}\eta_n(x),
		\end{equation}
		where the sum is allowed to take the value \(+\infty\). It remains to check that for \(\bar c\) \ref{it:1-majorant}-\ref{it:3-majorant} holds.\\
		\textbf{\Cref{it:1-majorant}:} Fix $x\in \cU$ and choose $j_0$ with
		$x\in B(c_{j_0},r_{j_0})$.  On a sufficiently small neighborhood $V$ of
		$x$, the factor indexed by $j_0$ vanishes.  For every $n\geq N_{j_0}$ one
		has $j_0\in J_n$, hence
		\[
		\eta_n|_V=0.
		\]
		Thus the sum in \eqref{eq:cbar} is locally finite on $\cU$, and consequently
		$c=\overline c|_{\cU}$ is smooth.
		
		To complete the proof of \Cref{it:1-majorant}, fix $x\in \cU$ and let
		$n\in\bN$ satisfy
		$n<b(x)$.  If $j\in J_n$ and $x\in B(c_j,R_j)$, then
		\[
		b(x)\leq N_j\leq n,
		\]
		contrary to $n<b(x)$.  Thus $x\notin B(c_j,R_j)$ for every $j\in J_n$, and
		therefore $\eta_n(x)=1$.  Consequently
		\[
		c(x)\geq 1+\#\{n\in\bN:n<b(x)\}\geq b(x) \quad \forall x\in \cU.
		\]
		\textbf{\Cref{it:2-majorant}.}
		By \Cref{it:1-majorant}, $\overline c$ is finite on $\cU$. Conversely, if
		$x\notin \cU$, then $x\notin B(c_j,R_j)$ for every $j$, because each such
		ball is contained in $\cU$. Hence every factor in \eqref{eq:eta-n} equals
		$1$, so $\eta_n(x)=1$ for every $n$ and therefore by~\eqref{eq:cbar} it holds
		$\overline c(x)=+\infty$ for each $x\in\cH\setminus\cU$.\\
		\textbf{\Cref{it:3-majorant}.} Let $x_k\rightharpoonup x$ in $\cH$. For every $j$, weak lower
		semicontinuity of the norm and monotonicity of $\theta_j$ give
		\[
		\liminf_{k\to\infty}
		\theta_j\bigl(\norm{x_k-c_j}^2\bigr)
		\geq
		\theta_j\bigl(\norm{x-c_j}^2\bigr).
		\]
		Finite products of these nonnegative \([0,1]\)-valued sequentially weakly lower semicontinuous functions are again sequentially weakly lower
semicontinuous, and the same holds for finite sums. Hence every partial sum in \eqref{eq:cbar} has this
		property. Since \eqref{eq:cbar} is the supremum of its partial sums,
		$\overline c$ is sequentially weakly lower semicontinuous.
	\end{proof}
	
	The second ingredient is a locally uniform Taylor estimate. It will allow us
	to choose the conformal factor so as to compensate for the local variation of
	the inverse inertia operator.
	\begin{lemma}[Taylor control]
		\label{lem:taylor}
		Let $D\in C^2(\cU, \mathcal{L}(\cH))$.  There exist functions
		\[
		\delta\colon \cU\longrightarrow(0,1],
		\qquad
		L\colon \cU\longrightarrow[1,\infty)
		\]
		such that $\delta^{-1}$ and $L$ are locally bounded and, whenever
		$\norm{h}<\delta(x)$,
		\begin{equation}\label{eq:taylor-control}
			x+h\in \cU,
			\qquad
			\opnorm{D(x+h)-D(x)-D'(x)[h]}
			\leq L(x)\norm{h}^2.
		\end{equation}
	\end{lemma}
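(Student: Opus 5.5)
We will prove \Cref{lem:taylor} by a pointwise construction at each \(x\in\cU\), followed by a check that the chosen quantities are locally controlled. The basic tool is Taylor's formula with integral remainder in the Banach space \(\mathcal L(\cH)\). For \(x+sh\in\cU\) for all \(s\in[0,1]\), it gives
\[
D(x+h)-D(x)-D'(x)[h]=\int_0^1(1-s)\,D''(x+sh)[h,h]\,ds,
\]
hence
\[
\opnorm{D(x+h)-D(x)-D'(x)[h]}\le \tfrac12\sup_{s\in[0,1]}\opnorm{D''(x+sh)}\,\norm{h}^2,
\]
where \(\opnorm{D''(y)}\) denotes the norm of \(D''(y)\) as a bounded bilinear map.

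\textbf{Step 1 (choice of \(\delta\)).} For \(x\in\cU\), let \(r(x):=\sup\{r\in(0,1]: \overline{B(x,2r)}\subset\cU,\ \sup_{B(x,2r)}\opnorm{D''}\le 2\opnorm{D''(x)}+1\}\). Because \(\cU\) is open and \(D''\) is continuous, this set is nonempty and \(r(x)>0\). Set \(\delta(x):=r(x)/2\). Then \(\norm{h}<\delta(x)\) implies \(x+h\in\cU\), and the segment \([x,x+h]\) lies in \(B(x,2r(x))\).

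\textbf{Step 2 (choice of \(L\)).} Define \(L(x):=\max\{1,\ \tfrac12\sup_{B(x,2r(x))}\opnorm{D''}\}\le \opnorm{D''(x)}+1\). The integral estimate then yields \eqref{eq:taylor-control}.

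\textbf{Step 3 (local boundedness).} This is the main point, since \(r\) need not be continuous. We argue as follows. Fix \(x_0\in\cU\) and set \(r_0:=r(x_0)/2\); one could also simply take some \(r_0\) with \(\overline{B(x_0,4r_0)}\subset\cU\) and \(\sup_{B(x_0,4r_0)}\opnorm{D''}\le 2\opnorm{D''(x_0)}+1=:M_0\), which exists by continuity. For \(y\in B(x_0,r_0)\), first \(\overline{B(y,2r_0)}\subset\overline{B(x_0,3r_0)}\subset\cU\). Second, for \(z\in B(y,2r_0)\subset B(x_0,3r_0)\) we have \(\opnorm{D''(z)}\le M_0\). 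Continuity of \(D''\) also gives \(\opnorm{D''(y)}\ge(M_0-1)/2-\varepsilon\) near \(x_0\) only in one direction, which is not directly what is needed. For this reason we simplify the definition to avoid comparing with \(\opnorm{D''(y)}\): replace the condition in \(r(x)\) by \(\sup_{B(x,2r)}\opnorm{D''}\le \opnorm{D''(x)}+1\) and show directly that \(r(y)\ge \min\{r_0,\cdot\}\) on a smaller ball.

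A cleaner route, which we adopt, avoids \(\sup\) definitions altogether. Cover \(\cU\) by countably many balls \(B(c_j,\rho_j)\) with \(\overline{B(c_j,3\rho_j)}\subset\cU\) and \(M_j:=\sup_{B(c_j,3\rho_j)}\opnorm{D''}<\infty\). Such balls exist by continuity of \(D''\) and separability; \(\rho_j\le1\) is chosen small enough that continuity bounds \(D''\) on the ball. Then set
\[
\delta(x):=\max\{\rho_j: x\in B(c_j,\rho_j)\},
\]
taken over \(j\le j(x)\), where \(j(x)\) is the least index with \(x\in B(c_j,\rho_j)\), so only finitely many \(j\) enter. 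Set \(L(x):=\max\{1,\ M_{j}/2\}\) for the maximizing \(j\).

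With these choices, \(\norm{h}<\rho_j\) keeps \([x,x+h]\subset B(c_j,2\rho_j)\), so \eqref{eq:taylor-control} follows from the integral estimate. For local boundedness near \(x_0\), take a neighbourhood \(V\subset B(c_{j(x_0)},\rho_{j(x_0)})\). On \(V\), \(j(y)\le j(x_0)\), so only the finitely many indices \(j\le j(x_0)\) occur. Hence \(\delta\ge\min_{j\le j(x_0)}\rho_j\) and \(L\le\max_{j\le j(x_0)}\max\{1,M_j/2\}\) on \(V\). The only delicate bookkeeping is making sure that only finitely many balls are in play, which the minimal-index trick handles.
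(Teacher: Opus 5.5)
Your adopted argument (the ``cleaner route'') is correct and complete, but it is not the paper's proof. You take a countable cover of $\cU$ by balls $B(c_j,\rho_j)$ with $\rho_j\le1$ and $\opnorm{D''}\le M_j$ on $B(c_j,3\rho_j)$. You then set $\delta(x)=\rho_{j(x)}$ and $L(x)=\max\{1,M_{j(x)}/2\}$, where $j(x)$ is the least index of a ball containing $x$; your ``maximum over $j\le j(x)$'' reduces to this single index. The Taylor bound holds because $[x,x+h]\subset B(c_{j(x)},2\rho_{j(x)})$. Local boundedness holds because $j(y)\le j(x_0)$ on $B(c_{j(x_0)},\rho_{j(x_0)})$. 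This is the same minimal-index device the paper uses for \Cref{lem:majorant}.

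The paper argues differently. It calls $r\in(0,1]$ admissible at $x$ if $B(x,2r)\subset\cU$ and $\sup_{B(x,2r)}\opnorm{D''}\le r^{-1}$, lets $\sigma(x)$ be the supremum of admissible radii, and sets $\delta=\sigma/4$, $L=\delta^{-1}$. Tying the bound to the radius rather than to $\opnorm{D''(x)}$ makes admissibility hereditary: if $r$ is admissible at $x$ and $\norm{y-x}<r/2$, then $r/2$ is admissible at $y$. Hence $\sigma\ge r/2$ near $x$ at once. Your route needs the Lindel\"of property of $\cU$, which holds since $\cH$ is separable. It also produces piecewise constant, discontinuous $\delta$ and $L$, which is harmless here. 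The paper's argument is purely local, works on open subsets of any Banach space, and needs to control only one function, since $L=\delta^{-1}$.

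Your first attempt (Steps 1--3 before the switch) does not, as written, prove local boundedness of $\delta^{-1}$. It should be deleted rather than left half-finished. Your diagnosis of why it stalls is also slightly off. A lower bound on $\opnorm{D''(y)}$ is exactly what is needed; the real obstruction is that you bounded $D''$ near $x_0$ by the crude constant $M_0=2\opnorm{D''(x_0)}+1$. Using continuity instead, pick $\epsilon\le1$ with $B(x_0,\epsilon)\subset\cU$ and $\bigl|\opnorm{D''(z)}-\opnorm{D''(x_0)}\bigr|\le\tfrac13$ on $B(x_0,\epsilon)$. Then $\opnorm{D''(z)}\le\opnorm{D''(y)}+\tfrac23\le2\opnorm{D''(y)}+1$ for all $y,z\in B(x_0,\epsilon)$. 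Hence $r(y)\ge\epsilon/3$ for $y\in B(x_0,\epsilon/3)$, which would have completed that route as well.
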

	\begin{proof}
		For \(x\in\cU\), call a radius \(r\in(0,1]\) \emph{admissible at \(x\)} if
		\begin{equation}\label{eq:admissible-radius}
			B(x,2r)\subset\cU,
			\qquad
			\sup_{y\in B(x,2r)}\opnorm{D''(y)}\leq r^{-1}.
		\end{equation}
		Let \(\sigma(x)\) be the supremum of the radii admissible at \(x\). The
		set of admissible radii is nonempty by openness of \(\cU\) and continuity
		of \(D''\) at \(x\). Hence \(\sigma(x)>0\).
		
		If \(r\) is admissible at \(x\) and \(\norm{y-x}<r/2\), then \(r/2\) is
		admissible at \(y\). Indeed,
		\[
		B(y,r)\subset B(x,3r/2)\subset B(x,2r),
		\]
		and
		\[
		\sup_{z\in B(y,r)}\opnorm{D''(z)}
		\leq r^{-1}
		\leq (r/2)^{-1}.
		\]
		Consequently, if \(r\) is admissible at \(x\), then \(\sigma(y)\geq r/2\) whenever \(\norm{y-x}<r/2\). Thus \(\sigma\) is locally bounded away from zero.  Next we define
		\[
		\delta(x):=\frac{\sigma(x)}{4},
		\qquad
		L(x):=\delta(x)^{-1}.
		\]
		Then \(\delta^{-1}\) and \(L\) are locally bounded.
		
		Fix \(h\in\cH\) with \(\norm{h}<\delta(x)\). Choose a radius \(r\)
		admissible at \(x\) such that
		\[
		r>\frac{\sigma(x)}{2}=2\delta(x).
		\]
		Then \(x+th\in B(x,r)\subset\cU\) for every \(t\in[0,1]\). Taylor's formula
		with integral remainder gives
		\[
		D(x+h)-D(x)-D'(x)[h]
		=
		\int_0^1(1-t)D''(x+th)[h,h]\,dt.
		\]
		Since \(x+th\in B(x,2r)\), \eqref{eq:admissible-radius} yields
		\[
		\opnorm{D(x+h)-D(x)-D'(x)[h]}
		\leq \frac{1}{2r}\norm{h}^2
		\leq L(x)\norm{h}^2.
		\]
		This proves \eqref{eq:taylor-control}.
	\end{proof}
	We now combine the two ingredients \Cref{lem:majorant} and \Cref{lem:taylor} to prove
	\Cref{prop:conformal-metric}.
	
	\begin{proof}[Proof of \Cref{prop:conformal-metric}]
		We first make the common construction. Write the prescribed metric as \(G_x(u,v)=\ip{A(x)u}{v}\)
		where \(  A\colon\cU\longrightarrow\mathcal{L}(\cH)\) is smooth in the operator norm, and every \(A(x)\) is bounded, positive,
		self-adjoint, and boundedly invertible. Apply \Cref{lem:majorant} to the
		locally bounded function
		\[
		x\longmapsto \max\{1,\opnorm{A(x)^{-1}}\}.
		\]
		We obtain a smooth function \(s\colon\cU\to[1,\infty)\) satisfying
		\begin{equation}\label{eq:s-normalization}
			s(x)\geq \opnorm{A(x)^{-1}}.
		\end{equation}
		Define
		\begin{equation}\label{eq:C-D}
			C(x):=s(x)A(x),
			\qquad
			D(x):=C(x)^{-1},
		\end{equation}
		and let \( K(x):=\opnorm{C(x)}.\) The normalization in \eqref{eq:s-normalization} gives \(C(x)\geq\id\),
		and hence \(K(x)\geq1\).
		
		We now apply \Cref{lem:taylor} to \(D\), obtaining \(\delta\) and \(L\).
		If \(\cU\neq\cH\), put
		\[
		d(x):=\dist_{\cH}(x,\cH\setminus \cU),
		\qquad x\in \cH.
		\]
		On \(\cU\), define the locally bounded function
		\begin{equation}\label{eq:b-final}
			b(x):=
			\begin{cases}
				\displaystyle
				\max\left\{
				1,
				L(x)K(x),
				\frac{\log K(x)}{\delta(x)^2},
				d(x)^{-2}
				\right\},&\cU\neq\cH,\\[1.2em]
				\displaystyle
				\max\left\{
				1,
				L(x)K(x),
				\frac{\log K(x)}{\delta(x)^2}
				\right\},&\cU=\cH.
			\end{cases}
		\end{equation}
We apply \Cref{lem:majorant} again, this time to \eqref{eq:b-final} and denote the resulting majorant by
		\[
		\overline c\colon \cH\to[1,\infty],
		\qquad
		c:=\overline c|_{\cU}
		\]
		In particular we have the following bounds,
		\begin{equation} \label{eq:c-boundary}
			c(x)\geq L(x)K(x),\quad c(x)\delta(x)^2\geq \log K(x),\quad c(x)\geq d(x)^{-2}\quad(\cU\neq \cH).
		\end{equation}
		We use this to define
		\begin{equation}\label{eq:F-B}
			F:\cU\longrightarrow [0,\infty): x\mapsto c(x)\bigl(1+\norm{x}^2\bigr),
			\qquad
			B:\cU \longrightarrow \mathcal{L}(\cH):x\mapsto e^{F(x)}C(x),
		\end{equation}
		where \(C\) is as in \eqref{eq:C-D}. Together with \eqref{eq:s-normalization}, this yields the conformal factor
		\[
		\rho:\cU\longrightarrow(0,\infty):x\mapsto s(x)e^{F(x)}.
		\]
		This gives rise to the conformally equivalent metric:
		\begin{equation}\label{eq:gtilde}
			\widetilde G_x(u,v):=\ip{B(x)u}{v} \quad \forall (x,u,v)\in \cU\times\cH\times\cH.
		\end{equation}
		It remains to check \ref{it:1-controlled-construction}-\ref{it:4-controlled-construction}.
		
		\smallskip
		\noindent\textbf{\Cref{it:1-controlled-construction}:}
		The operator \(C(x)\) is bounded, positive, self-adjoint, and boundedly
		invertible. Since \(C(x)\geq\id\), its inverse \(D(x)\) has the same
		properties and
		\begin{equation}\label{eq:C-D-bounds}
			C(x)\geq\id,
			\qquad
			0<D(x)\leq\id.
		\end{equation}
		The operator \(B(x)=e^{F(x)}C(x)\) is likewise bounded, positive,
		self-adjoint, and boundedly invertible. Moreover, \(D(x)^{-1}=C(x)\), so
		\begin{equation}\label{eq:B-Dinv_eF}
			B(x)=e^{F(x)}D(x)^{-1}.
		\end{equation}
		All these operator fields are smooth. For later use, positivity and
		self-adjointness of \(C(x)\) also give
		\begin{equation}\label{eq:D-lower}
			D(x)=C(x)^{-1}\geq K(x)^{-1}\id.
		\end{equation}
		
		\smallskip
		\noindent\textbf{\Cref{it:2-controlled-construction}:}
		The functions \(s\) and \(F\) are smooth, and \(s>0\); hence \(\rho\) is
		smooth and positive. By \eqref{eq:F-B},
		\[
		B(x)=e^{F(x)}s(x)A(x)=\rho(x)A(x).
		\]
		Consequently,
		\[
		\widetilde G_x(u,v)
		=\ip{B(x)u}{v}
		=\rho(x)G_x(u,v).
		\]
		Since \(B\) is a smooth field of bounded, positive, self-adjoint, and
		boundedly invertible operators, \(\widetilde G\) is a smooth strong
		Riemannian metric.
		
		\smallskip
		\noindent\textbf{\Cref{it:3-controlled-construction}:}
		Since \(F\geq0\) and \(C\geq\id\),
		\begin{equation}\label{eq:B-ge-I}
			B(x)=e^{F(x)}C(x)\geq\id
			\qquad(x\in\cU).
		\end{equation}
		
		\smallskip
		\noindent\textbf{\Cref{it:4-controlled-construction}:}
		Assume \(\cU\neq\cH\). By \eqref{eq:c-boundary},
		\[
		F(x)=c(x)\bigl(1+\norm{x}^2\bigr)
		\geq c(x)\geq d(x)^{-2}.
		\]
		Since \(e^{F(x)}\geq F(x)\) and \(C(x)\geq\id\),
		\begin{equation}\label{eq:B-boundary}
			B(x)=e^{F(x)}C(x)\geq d(x)^{-2}\id
			\qquad(x\in\cU).
		\end{equation}
	\end{proof}
	
	For the remainder of the proof, fix the conformal factor \(\rho\) and the
	construction data \(F,D,B\) supplied by
	\Cref{prop:conformal-metric}.
	
	\subsubsection{Verification of the completeness criterion}
	\label{sec:completion}
	
	Next we have to verify that the metric defined in \Cref{prop:conformal-metric} satisfies the hypotheses of
	\Cref{thm:abstract_completeness}.
	
	\begin{proposition}[Verification of the completeness criterion]
		\label{prop:completeness-criterion}
		The strong Riemannian metric \(\widetilde G\) on $\cU$ constructed in
		\ref{it:2-controlled-construction} in \Cref{prop:conformal-metric} satisfies \ref{item:abc-a}-\ref{item:abc-c} of
		\Cref{thm:abstract_completeness}.
	\end{proposition}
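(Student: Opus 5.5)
The plan is to check the three hypotheses of \Cref{thm:abstract_completeness} one at a time. Hypotheses \ref{item:abc-a} and \ref{item:abc-b} follow directly from the barrier inequalities of \Cref{prop:conformal-metric}. Hypothesis \ref{item:abc-c} is obtained from a Fenchel-dual representation of the energy together with a weak upper-semicontinuity property of the inverse metric \(B(x)^{-1}=e^{-F(x)}D(x)\).

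\textbf{Hypothesis \ref{item:abc-a}.} By \ref{it:3-controlled-construction} of \Cref{prop:conformal-metric}, \(B(x)\geq\id\) on all of \(\cU\). Hence \(\ip{h}{h}\leq\widetilde G_x(h,h)\) holds with \(C=1\), uniformly on every metric ball.

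\textbf{Hypothesis \ref{item:abc-b}.} If \(\cU=\cH\) there is nothing to prove, so assume \(\cU\neq\cH\).
\begin{itemize}
\item The function \(d\) is \(1\)-Lipschitz, and \(B\geq d^{-2}\id\). Hence for a piecewise \(C^1\) curve \(\gamma\) in \(\cU\), the function \(t\mapsto\log d(\gamma(t))\) is absolutely continuous and
\[
\bigl|\tfrac{d}{dt}\log d(\gamma(t))\bigr|\leq \frac{\norm{\dot\gamma}}{d(\gamma)}\leq \widetilde G_{\gamma}(\dot\gamma,\dot\gamma)^{1/2}.
\]
\item Integrating along a curve of \(\widetilde G\)-length \(\ell\) starting at \(x_0\) gives \(d(\gamma(1))\geq d(x_0)e^{-\ell}\).
\item Therefore the \(\widetilde G\)-ball of radius \(R\) about \(x_0\) lies in \(\{d\geq d(x_0)e^{-R}\}\). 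This set is Hilbert-closed because \(d\) is continuous, and it is contained in \(\cU\).
\end{itemize}
The same estimate shows that a curve of finite \(\widetilde G\)-energy cannot touch \(\cH\setminus\cU\). This fact will be reused in \ref{item:abc-c}.

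\textbf{Hypothesis \ref{item:abc-c}.} I would use the pointwise duality
\[
\widetilde G_x(v,v)=\sup_{w\in\cH}\bigl(2\ip{w}{v}-e^{-F(x)}\ip{D(x)w}{w}\bigr).
\]
Set \(q(x,w):=e^{-F(x)}\ip{D(x)w}{w}\) for \(x\in\cU\) and \(q(x,w):=0\) for \(x\notin\cU\). This choice is natural, since \(\overline c=\infty\) off \(\cU\) by \Cref{lem:majorant}, so "\(F=\infty\)" there. Define
\[
\Phi(\gamma):=\sup_{w}\int_0^1\bigl(2\ip{w(t)}{\dot\gamma(t)}-q(\gamma(t),w(t))\bigr)\,dt,
\]
with the supremum over a countable dense family of (say piecewise constant) \(w\in L^2([0,1],\cH)\).

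Then \(\Phi=\mathcal E_{\widetilde G}\). On curves inside \(\cU\) this is measurable selection of the pointwise supremum, a routine step. For a curve leaving \(\cU\), the dual value is at least the energy of the portions lying in \(\cU\), which is infinite by the escape estimate from \ref{item:abc-b}.

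Now let \(\gamma_k\rightharpoonup\gamma\) in \(H^1_{p,q}\). Then \(\dot\gamma_k\rightharpoonup\dot\gamma\) in \(L^2\), and \(\gamma_k(t)\rightharpoonup\gamma(t)\) in \(\cH\) for every \(t\). Since \(0\leq q(\cdot,w)\leq\norm{w}^2\), reverse Fatou reduces lower semicontinuity of each dual functional, and hence of their supremum \(\Phi\), to the following pointwise claim.

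\emph{Claim.} For every fixed \(w\in\cH\), the map \(x\mapsto q(x,w)\) is sequentially weakly upper semicontinuous on \(\cH\).

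\textbf{Main obstacle: proving the Claim.} This is where the bounds in \eqref{eq:c-boundary} are used. Let \(x_k\rightharpoonup x\).
\begin{itemize}
\item If \(x\notin\cU\), then \(\liminf_k\overline c(x_k)\geq\overline c(x)=\infty\) by \Cref{lem:majorant}\ref{it:3-majorant}. Hence \(q(x_k,w)\leq e^{-F(x_k)}\norm{w}^2\to0=q(x,w)\).
\item If \(x\in\cU\), write \(h_k:=x_k-x\), so \(h_k\rightharpoonup0\). Then
\[
\norm{x_k}^2=\norm{x}^2+\norm{h_k}^2+o(1),
\qquad
\liminf_k c(x_k)\geq c(x).
\]
Along a subsequence realizing the limsup, this gives \(e^{-F(x_k)}\leq e^{-F(x)}e^{-c(x)\norm{h_k}^2}\,(1+o(1))\).
\end{itemize}
For \(x\in\cU\), split the subsequence into two cases, with all constants \(L,K,\delta\) evaluated at the fixed limit \(x\).
\begin{itemize}
\item \emph{Case \(\norm{h_k}<\delta(x)\).} \Cref{lem:taylor} gives
\[
\ip{D(x_k)w}{w}\leq\ip{D(x)w}{w}+\ip{D'(x)[h_k]w}{w}+L\norm{h_k}^2\norm{w}^2 .
\]
The middle term is a bounded linear functional of \(h_k\), so it tends to \(0\). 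By \eqref{eq:D-lower} and \(c\geq LK\),
\[
L\norm{w}^2\leq LK\ip{D(x)w}{w}\leq c\ip{D(x)w}{w}.
\]
Since \((1+c\norm{h}^2)e^{-c\norm{h}^2}\leq1\), we get \(\limsup_k q(x_k,w)\leq q(x,w)\).
\item \emph{Case \(\norm{h_k}\geq\delta(x)\).} By \(c\delta^2\geq\log K\), we have \(e^{-c\norm{h_k}^2}\leq K^{-1}\). Also \(\ip{D(x_k)w}{w}\leq\norm{w}^2\leq K\ip{D(x)w}{w}\), which again yields the bound \(q(x,w)\).
\end{itemize}
Handling the \(o(1)\) errors and the subsequence bookkeeping is routine. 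The delicate point is exactly this two-regime estimate, which is why \(b\) in \eqref{eq:b-final} was chosen as it was.
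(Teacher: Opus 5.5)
Your proposal is correct and follows essentially the same route as the paper: (a) from $B\geq\id$, (b) from the logarithmic estimate $|\log d(x)-\log d(p)|\leq d_{\widetilde G}(p,x)$, and (c) via Fenchel duality, reverse Fatou, the paper's weak upper semicontinuity lemma for $e^{-F(x)}\langle D(x)w,w\rangle$ with the same two-regime estimate (Taylor radius with $c\geq LK$, versus $c\delta^2\geq\log K$), and truncated maximizing dual fields before the first exit time for curves leaving $\cU$. Your two variations do not change the argument. Restricting to a countable dense family of piecewise-constant dual fields is harmless, since $0\leq B^{-1}\leq\id$ makes $w\mapsto\int_0^1(2\langle w,\dot\gamma\rangle-q(\gamma,w))\,dt$ continuous on $L^2$. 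Splitting index-by-index according to whether $\|h_k\|<\delta(x)$, rather than via the limiting radius $r$, is equivalent.
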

	
	The proof uses weak upper semicontinuity of the inverse metric and a dual
	characterization of the extended energy. The following lemma is the central point of the
	proof.
	
	\begin{lemma}[Weak upper semicontinuity of inverse quadratic forms]
		\label{lem:inverse-usc} Let \(F,D,B,\) as in \Cref{prop:conformal-metric}. For each $z\in \cH$ we define
		\begin{equation}\label{eq:Qz}
			Q_z(x):=
			\begin{cases}
				e^{-F(x)}\ip{D(x)z}{z},&x\in \cU,\\
				0,&x\notin \cU.
			\end{cases}
		\end{equation}
		Then $Q_z\colon \cH\to[0,\infty)$ is sequentially weakly upper
		semicontinuous.  Moreover,
		\begin{equation}\label{eq:Q-bound}
			0\leq Q_z(x)\leq \norm{z}^2
			\qquad(x,z\in \cH).
		\end{equation}
		For $x\in \cU$ one has
		\[
		Q_z(x)=\ip{B(x)^{-1}z}{z}.
		\]
	\end{lemma}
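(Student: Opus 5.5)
The plan is to dispose of the elementary assertions first and then prove weak upper semicontinuity at each point, treating points outside \(\cU\) and points inside \(\cU\) separately.

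\textbf{Elementary assertions.} For \(x\in\cU\), \Cref{prop:conformal-metric}\ref{it:1-controlled-construction} gives \(B(x)=e^{F(x)}D(x)^{-1}\). Hence \(B(x)^{-1}=e^{-F(x)}D(x)\), which is exactly the identity \(Q_z(x)=\ip{B(x)^{-1}z}{z}\). Since \(F\ge 0\) and \(0<D(x)\le\id\), we get \(0\le Q_z(x)\le\norm{z}^2\), which is \eqref{eq:Q-bound}.

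\textbf{Points outside \(\cU\).} Let \(x_k\rightharpoonup x\). If \(x\notin\cU\), then \(Q_z(x)=0\), and we only need \(\limsup_k Q_z(x_k)\le 0\). I expect the barrier to give this. The majorant \(\overline c\) from \Cref{lem:majorant} is sequentially weakly lower semicontinuous and equals \(+\infty\) off \(\cU\). Therefore \(c(x_k)\to\infty\) along those \(x_k\) lying in \(\cU\). Since \(F\ge c\), this gives \(Q_z(x_k)\le e^{-F(x_k)}\norm{z}^2\to0\), and \(Q_z(x_k)=0\) for the remaining \(x_k\).

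\textbf{Points inside \(\cU\): setup.} This is the main case. Fix \(x\in\cU\), put \(h_k:=x_k-x\rightharpoonup0\) and \(M:=\sup_k\norm{h_k}<\infty\), and fix \(\varepsilon>0\). Terms with \(x_k\notin\cU\) vanish, so we may assume \(x_k\in\cU\).

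By weak lower semicontinuity of \(\overline c\), eventually \(c(x_k)\ge c(x)-\varepsilon\). Expanding \(\norm{x_k}^2=\norm{x}^2+2\ip{x}{h_k}+\norm{h_k}^2\) and using \(\ip{x}{h_k}\to0\), we obtain
\[
F(x_k)\ \ge\ F(x)+c(x)\norm{h_k}^2-o(1)-\varepsilon M^2 .
\]
Thus \(e^{-F(x_k)}\le e^{-F(x)}e^{-c(x)\norm{h_k}^2}e^{\varepsilon M^2+o(1)}\).

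Write \(a:=\ip{D(x)z}{z}\). By \eqref{eq:D-lower}, \(a\ge K(x)^{-1}\norm{z}^2\). We split the indices according to the size of \(h_k\).

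\emph{Large increments, \(\norm{h_k}\ge\delta(x)\).} Use the crude bound \(D(x_k)\le\id\) together with \(c\delta^2\ge\log K\) from \eqref{eq:c-boundary}. This gives
\[
Q_z(x_k)\le e^{-F(x)}K(x)^{-1}\norm{z}^2e^{\varepsilon M^2+o(1)}\le e^{-F(x)}a\,e^{\varepsilon M^2+o(1)}.
\]

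\emph{Small increments, \(\norm{h_k}<\delta(x)\).} Apply \Cref{lem:taylor} to get
\[
\ip{D(x_k)z}{z}\le a+\ip{D'(x)[h_k]z}{z}+L(x)\norm{h_k}^2\norm{z}^2 .
\]
The middle term tends to \(0\), because \(h\mapsto\ip{D'(x)[h]z}{z}\) is a bounded linear functional and \(h_k\rightharpoonup0\). The remainder is absorbed using \(\norm{z}^2\le Ka\) and \(c\ge LK\):
\[
e^{-ct}\bigl(a+L t\norm{z}^2\bigr)\le a\,e^{-ct}(1+LKt)\le a\,e^{(LK-c)t}\le a,
\qquad t=\norm{h_k}^2 .
\]

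\textbf{Conclusion.} In both subcases \(\limsup_k Q_z(x_k)\le e^{\varepsilon M^2}Q_z(x)\). Letting \(\varepsilon\to0\) gives the claim.

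The main obstacle is the small-increment case, where three things must be balanced at once: the linear Taylor term, the quadratic remainder, and the gain \(e^{-c\norm{h_k}^2}\) that comes from the weight \(1+\norm{x}^2\) inside \(F\). This is exactly where the choices \(c\ge LK\) and \(c\delta^2\ge\log K\) in \eqref{eq:c-boundary} are used.
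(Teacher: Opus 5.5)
Your proof is correct and follows the paper's argument. The steps match: limits outside $\cU$ are handled by the barrier from the weakly lower semicontinuous majorant $\overline c$; the weight $1+\norm{x}^2$ in $F$ supplies the gain $e^{-c(x)\norm{h_k}^2}$; and the Taylor and crude regimes are closed by $c\geq LK$ and $c\delta^2\geq\log K$. The only difference is organisational: you split index-wise on $\norm{h_k}$ versus $\delta(x)$ with an $\varepsilon$-slack, while the paper passes to a subsequence with $\norm{h_n}^2\to r^2$ and splits on $r$.

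One small slip: the error coming from $c(x_k)\geq c(x)-\varepsilon$ is $\varepsilon(1+\norm{x_k}^2)\leq\varepsilon\sup_k(1+\norm{x_k}^2)$, not $\varepsilon M^2$. It is still a finite constant times $\varepsilon$, so the conclusion is unaffected.
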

	\begin{proof}
		For \(x\in\cU\), \eqref{eq:B-Dinv_eF} gives
		\(B(x)^{-1}=e^{-F(x)}D(x)\). Hence \eqref{eq:Qz} yields
		\[
		Q_z(x)=\ip{B(x)^{-1}z}{z}
		\qquad \forall x\in\cU.
		\]
		Moreover, \eqref{eq:C-D-bounds} and \(F\geq0\) give
		\[
		0\leq e^{-F(x)}\ip{D(x)z}{z}\leq\norm{z}^2
		\qquad \forall x\in\cU.
		\]
		Together with \eqref{eq:Qz}, this proves \eqref{eq:Q-bound}.
		
		Let \(x_n\rightharpoonup x\) in \(\cH\), and set
		\begin{equation}\label{eq:ell}
			\ell:=\limsup_{n\to\infty}Q_z(x_n).
		\end{equation}
		By \eqref{eq:Q-bound}, \(0\leq\ell<\infty\). After passing to a
		subsequence, we may assume that \(Q_z(x_n)\to\ell\). If \(\ell=0\), then
		\(\ell\leq Q_z(x)\) is immediate. Thus assume that \(\ell>0\).
		
		For all sufficiently large \(n\), one has \(Q_z(x_n)>\ell/2\). Since
		\(Q_z\) vanishes on \(\cH\setminus\cU\), we may discard finitely many terms
		and suppose that \(x_n\in\cU\) for every \(n\).
		
		Suppose first that \(x\notin\cU\). By \ref{it:2-majorant} of
		\Cref{lem:majorant}, \(\overline c(x)=+\infty\). Since \(\overline c\) is
		sequentially weakly lower semicontinuous by \ref{it:3-majorant} of
		\Cref{lem:majorant}, and since
		\(\overline c(x_n)=c(x_n)\) on \(\cU\) by \ref{it:1-majorant} of
		\Cref{lem:majorant}, we obtain
		\[
		+\infty
		=\overline c(x)
		\leq\liminf_{n\to\infty}\overline c(x_n)
		=\liminf_{n\to\infty}c(x_n).
		\]
		Thus \(c(x_n)\to+\infty\). By \eqref{eq:Qz},
		\eqref{eq:C-D-bounds}, and \eqref{eq:F-B},
		\[
		0\leq Q_z(x_n)
		\leq e^{-F(x_n)}\norm{z}^2
		\leq e^{-c(x_n)}\norm{z}^2
		\longrightarrow0.
		\]
		This contradicts \(\ell>0\). Hence the desired inequality holds whenever
		\(x\notin\cU\).
		
		We may therefore assume that \(x\in\cU\). Put \(h_n:=x_n-x\). Then
		\(h_n\rightharpoonup0\), so \((h_n)\) is bounded. Consequently,
		\((\norm{h_n}^2)\) is a bounded sequence of real numbers and has a
		convergent subsequence. Passing to this subsequence does not affect the
		convergence \(Q_z(x_n)\to\ell\). We may therefore assume that
		\[
		\norm{h_n}^2\longrightarrow r^2
		\]
		for some \(r\geq0\). Since \(h_n\rightharpoonup0\),
		\[
		\norm{x_n}^2
		=\norm{x}^2+\norm{h_n}^2+2\ip{x}{h_n}
		\longrightarrow\norm{x}^2+r^2.
		\]
		Moreover, using \ref{it:3-majorant} of \Cref{lem:majorant} again gives
		\[
		\liminf_{n\to\infty}c(x_n)\geq c(x).
		\]
		Together with \eqref{eq:F-B}, this yields
		\[
		\liminf_{n\to\infty}F(x_n)
		=
		\liminf_{n\to\infty}
		c(x_n)\bigl(1+\norm{x_n}^2\bigr)
		\geq c(x)\bigl(1+\norm{x}^2+r^2\bigr)
		=F(x)+c(x)r^2.
		\]
		This is equivalent to
		\begin{equation}\label{eq:F-gap}
			\liminf_{n\to\infty}\bigl(F(x_n)-F(x)\bigr)
			\geq c(x)r^2.
		\end{equation}
		Next, \eqref{eq:F-gap} implies that, for every \(\varepsilon>0\) and all
		sufficiently large \(n\),
		\[
		Q_z(x_n)
		\leq e^{-F(x)}e^{-c(x)r^2+\varepsilon}
		\ip{D(x_n)z}{z}.
		\]
		Taking the limit superior in this inequality, using \eqref{eq:ell}, and
		then letting \(\varepsilon\downarrow0\), we obtain
		\begin{equation}\label{eq:Q-limsup}
			\ell
			\leq e^{-F(x)}e^{-c(x)r^2}
			\limsup_{n\to\infty}\ip{D(x_n)z}{z}.
		\end{equation}
		
		Put \(u:=\ip{D(x)z}{z}\). By \eqref{eq:D-lower},
		\begin{equation}\label{eq:z-by-u}
			\norm{z}^2\leq K(x)u.
		\end{equation}
		
		It remains to estimate the last factor in \eqref{eq:Q-limsup}. The Taylor
		estimate \eqref{eq:taylor-control} applies precisely when
		\(\norm{h_n}<\delta(x)\). We therefore compare the limiting size \(r\) of
		the increments with the Taylor radius \(\delta(x)\).
		
		\smallskip
		\noindent\textbf{Case I: \(r<\delta(x)\).}
		Then \(\norm{h_n}<\delta(x)\) for all sufficiently large \(n\). We define
		the remainder term by
		\begin{equation}\label{eq:Restterm}
			R_n:=D(x_n)-D(x)-D'(x)[h_n].
		\end{equation}
		By \eqref{eq:taylor-control}, the remainder in \eqref{eq:Restterm}
		satisfies
		\[
		\opnorm{R_n}\leq L(x)\norm{h_n}^2.
		\]
		The map \(h\longmapsto\ip{D'(x)[h]z}{z}\) is a bounded linear functional on
		\(\cH\). Since \(h_n\rightharpoonup0\), weak continuity of bounded linear
		functionals gives
		\[
		\ip{D'(x)[h_n]z}{z}\longrightarrow0.
		\]
		It follows, together with \eqref{eq:z-by-u}, that
		\[
		\limsup_{n\to\infty}\ip{D(x_n)z}{z}
		\leq u+L(x)r^2\norm{z}^2
		\leq \bigl(1+L(x)K(x)r^2\bigr)u.
		\]
		Combining this with \eqref{eq:Q-limsup}, the estimate
		\(c(x)\geq L(x)K(x)\) from \eqref{eq:c-boundary}, and \(1+t\leq e^t\), we obtain
		\[
		\begin{aligned}
			\ell
			&\leq e^{-F(x)}e^{-c(x)r^2}
			\bigl(1+L(x)K(x)r^2\bigr)u \\
			&\leq e^{-F(x)}e^{-c(x)r^2}
			\bigl(1+c(x)r^2\bigr)u \\
			&\leq e^{-F(x)}u
			=Q_z(x).
		\end{aligned}
		\]
		
		\smallskip
		\noindent\textbf{Case II: \(r\geq\delta(x)\).}
		In this case, Taylor control need not apply. Instead, the estimate
		\eqref{eq:C-D-bounds} gives
		\[
		\ip{D(x_n)z}{z}\leq\norm{z}^2.
		\]
		Using \eqref{eq:Q-limsup}, \eqref{eq:z-by-u}, and the estimate
		\(c(x)\delta(x)^2\geq\log K(x)\) from \eqref{eq:c-boundary}, we find
		\[
		\ell\leq e^{-F(x)}e^{-c(x)r^2}\norm{z}^2
		\leq e^{-F(x)}K(x)e^{-c(x)\delta(x)^2}u
		\leq e^{-F(x)}u
		=Q_z(x).
		\]
		Thus \(Q_z\) is sequentially weakly upper semicontinuous.
	\end{proof}
    The construction has two independent roles.  The boundary estimate
\eqref{eq:B-boundary} makes approaching \(\partial\cU\) increasingly
expensive.  Independently, the normalization
\eqref{eq:s-normalization}--\eqref{eq:C-D}, the weakly lower
semicontinuous majorant supplied by \Cref{lem:majorant}, and the exponent
in \eqref{eq:F-B} are designed to control the inverse metric under weak
convergence.  The factor \(1+\norm{x}^{2}\) produces precisely the
norm-defect gain in \eqref{eq:F-gap}; combined with the Taylor estimate of
\Cref{lem:taylor}, it yields the weak upper semicontinuity of the inverse
quadratic forms established in \Cref{lem:inverse-usc}.
	We now turn to the lower-semicontinuity argument. The idea is to express the
	quadratic energy density as a supremum of functionals that are affine in the
	velocity. After integration, these dual functionals will be weakly lower
	semicontinuous.
	
	The underlying pointwise identity is Fenchel duality. Let \(T\) be a bounded,
	positive, self-adjoint, and boundedly invertible operator on \(\cH\). Then
	\begin{equation}\label{eq:fenchel-pointwise}
		\ip{Tv}{v}
		=\sup_{z\in\cH}
		\left(2\ip{z}{v}-\ip{T^{-1}z}{z}\right).
	\end{equation}
	Indeed, for every \(z\in\cH\),
	\[
	2\ip{z}{v}-\ip{T^{-1}z}{z}-\ip{Tv}{v}
	=
	-\ip{T^{-1}(z-Tv)}{z-Tv}.
	\]
	Thus the right-hand side of \eqref{eq:fenchel-pointwise} is bounded above by
	\(\ip{Tv}{v}\), and equality is attained for \(z=Tv\).
	
	Applied pointwise with \(T=B(\gamma(t))\), this identity motivates the
	following dual functionals. Fix arbitrary \(p,q\in\cU\), allowing
	\(p=q\). For
	\(\zeta\in L^2([0,1],\cH)\) and
	\(\gamma\in H^1_{p,q}([0,1],\cH)\), we define
	\begin{equation}\label{eq:Phi}
		\Phi_\zeta(\gamma)
		:=2\int_0^1\ip{\zeta(t)}{\dot\gamma(t)}\,dt
		-\int_0^1Q_{\zeta(t)}(\gamma(t))\,dt,
	\end{equation}
	with $Q_z$ as in \eqref{eq:Qz}. This map
	\[
	(x,z)\longmapsto Q_z(x)
	\]
	is Borel measurable in the norm topology: it is continuous on
	\(\cU\times\cH\) and vanishes on its Borel complement. Consequently,
	\(t\mapsto Q_{\zeta(t)}(\gamma(t))\) is measurable. Moreover,
	\eqref{eq:Q-bound} gives
	\[
	0\leq Q_{\zeta(t)}(\gamma(t))\leq\norm{\zeta(t)}^2
	\qquad \text{for almost every }t\in[0,1].
	\]
	Thus the second integral in \eqref{eq:Phi} is well defined and finite. Next we prove that the functional $\Phi_\zeta$ is lower semicontinuous:
	\begin{lemma}[Lower semicontinuity of the dual functionals]
		\label{lem:Phi-lsc}
		For every \(\zeta\in L^2([0,1],\cH)\), the functional
		\(\Phi_\zeta\) defined in \eqref{eq:Phi} is sequentially weakly lower
		semicontinuous on \(H^1_{p,q}([0,1],\cH)\).
	\end{lemma}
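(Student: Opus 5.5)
The functional \(\Phi_\zeta\) in \eqref{eq:Phi} splits into a linear term and a concave-looking term, and I will treat them separately. Let \(\gamma_n\rightharpoonup\gamma\) weakly in \(H^1_{p,q}([0,1],\cH)\).

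\textbf{Step 1 (the linear term).} The map \(\gamma\mapsto\dot\gamma\) is bounded and linear from \(H^1\) to \(L^2([0,1],\cH)\), so \(\dot\gamma_n\rightharpoonup\dot\gamma\) in \(L^2\). Since \(\zeta\in L^2\), this gives
\[
\int_0^1\ip{\zeta(t)}{\dot\gamma_n(t)}\,dt\to\int_0^1\ip{\zeta(t)}{\dot\gamma(t)}\,dt .
\]
The first term is therefore weakly continuous.

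\textbf{Step 2 (pointwise weak convergence of the curves).} It remains to show
\[
\limsup_{n\to\infty}\int_0^1 Q_{\zeta(t)}(\gamma_n(t))\,dt\leq\int_0^1 Q_{\zeta(t)}(\gamma(t))\,dt .
\]
Fix \(t\in[0,1]\). Evaluation \(\gamma\mapsto\gamma(t)\) is a bounded linear map \(H^1([0,1],\cH)\to\cH\), because \(\norm{\gamma(t)}\leq\norm{\gamma(0)}+\|\dot\gamma\|_{L^2}\). Composing with a fixed functional \(\ip{\cdot}{w}\) gives a bounded linear functional on \(H^1\). Hence \(\gamma_n(t)\rightharpoonup\gamma(t)\) in \(\cH\) for every \(t\).

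\textbf{Step 3 (applying \Cref{lem:inverse-usc}).} For each fixed \(t\) with \(\zeta(t)\) defined, \Cref{lem:inverse-usc} applied with \(z=\zeta(t)\) yields
\[
\limsup_n Q_{\zeta(t)}(\gamma_n(t))\leq Q_{\zeta(t)}(\gamma(t)).
\]

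\textbf{Step 4 (integrating via reverse Fatou).} By \eqref{eq:Q-bound}, the integrands are dominated by \(\norm{\zeta(t)}^2\), which is integrable. Apply Fatou's lemma to the nonnegative functions \(\norm{\zeta(t)}^2-Q_{\zeta(t)}(\gamma_n(t))\); measurability holds by the Borel measurability observed before the statement. This gives the reverse Fatou inequality
\[
\limsup_n\int_0^1 Q_{\zeta(t)}(\gamma_n(t))\,dt\leq\int_0^1\limsup_n Q_{\zeta(t)}(\gamma_n(t))\,dt .
\]
Combining this with Step 3 bounds the right-hand side by \(\int_0^1 Q_{\zeta(t)}(\gamma(t))\,dt\). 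Together with Step 1, this yields \(\liminf_n\Phi_\zeta(\gamma_n)\geq\Phi_\zeta(\gamma)\).

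\textbf{Main obstacle.} The only delicate point is that we need pointwise weak convergence \(\gamma_n(t)\rightharpoonup\gamma(t)\) for every \(t\), not merely almost every \(t\). Only then does \Cref{lem:inverse-usc} apply pointwise, which requires sequential weak upper semicontinuity at every point, including points outside \(\cU\). Step 2 supplies exactly this pointwise weak convergence, so no subsequence extraction or compactness is needed. The rest is the domination bound \eqref{eq:Q-bound}, which is what makes reverse Fatou legitimate.
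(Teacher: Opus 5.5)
Your proof is correct and follows the paper's argument step for step. Weak convergence of $\dot\gamma_n$ in $L^2$ handles the linear term, boundedness of point evaluations gives $\gamma_n(t)\rightharpoonup\gamma(t)$, \Cref{lem:inverse-usc} supplies the pointwise $\limsup$ bound, and the domination $0\leq Q_{\zeta(t)}(\gamma_n(t))\leq\norm{\zeta(t)}^2$ justifies reverse Fatou. Your closing remark overstates the need for convergence at every $t$ (almost every $t$ would suffice for the Fatou step), but this does not affect the argument.
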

	
	\begin{proof}
		Suppose $\gamma_n\rightharpoonup\gamma$ weakly in $H^1([0,1],\cH)$.  Then
		\[
		\dot\gamma_n\rightharpoonup\dot\gamma
		\quad\text{weakly in }L^2([0,1],\cH).
		\]
		Moreover, evaluation at each $t\in[0,1]$ is a bounded linear map from
		$H^1([0,1],\cH)$ to $\cH$.  Hence
		\[
		\gamma_n(t)\rightharpoonup\gamma(t)
		\quad\text{in }\cH
		\]
		for every $t$.  \Cref{lem:inverse-usc} gives, for almost every $t$,
		\[
		\limsup_{n\to\infty}
		Q_{\zeta(t)}(\gamma_n(t))
		\leq Q_{\zeta(t)}(\gamma(t)).
		\]
		The bound
		\[
		0\leq Q_{\zeta(t)}(\gamma_n(t))\leq\norm{\zeta(t)}^2
		\]
		permits the reverse Fatou lemma, yielding
		\[
		\limsup_{n\to\infty}
		\int_0^1Q_{\zeta(t)}(\gamma_n(t))\,dt
		\leq
		\int_0^1Q_{\zeta(t)}(\gamma(t))\,dt.
		\]
		The first term in \eqref{eq:Phi} converges by weak convergence of the
		velocities.  Therefore
		\[
		\Phi_\zeta(\gamma)
		\leq\liminf_{n\to\infty}\Phi_\zeta(\gamma_n).
		\]
	\end{proof}
	
	The purpose of the dual functionals is now transparent. Direct weak lower
	semicontinuity of the energy is difficult because the metric coefficient
	\(B(\gamma(t))\) depends on the curve. Fenchel duality replaces this
	coefficient by its inverse, whose quadratic forms were arranged to have the
	weak upper-semicontinuity property of \Cref{lem:inverse-usc}.
	
	Indeed, if \(\gamma([0,1])\subset\cU\), then
	\eqref{eq:fenchel-pointwise} and \Cref{lem:inverse-usc} give, for every
	\(t\in[0,1]\),
	\[
	\widetilde G_{\gamma(t)}\bigl(\dot\gamma(t),\dot\gamma(t)\bigr)
	=
	\sup_{z\in\cH}
	\left(
	2\ip{z}{\dot\gamma(t)}
	-Q_z\bigl(\gamma(t)\bigr)
	\right).
	\]
	Thus \(\Phi_\zeta\) is obtained by choosing a time-dependent dual vector
	\(z=\zeta(t)\) in this pointwise formula. We take the supremum over all such
	choices. This is useful because every \(\Phi_\zeta\) is sequentially weakly
	lower semicontinuous by \Cref{lem:Phi-lsc}, and the following lemma will show
	that the resulting supremum is exactly the extended Riemannian energy.
	
	Define
	\begin{equation}\label{eq:E-star}
		\mathcal E^*(\gamma)
		:=\sup_{\zeta\in L^2([0,1],\cH)}\Phi_\zeta(\gamma).
	\end{equation}
	By \Cref{lem:Phi-lsc}, \(\mathcal E^*\) is sequentially weakly lower
	semicontinuous. It remains to check that this dual energy functional coincides with the Riemannian energy functional:
	
	\begin{lemma}[Identification with the extended Riemannian energy]
		\label{lem:energy-identification}
		Let \(p,q\in\cU\). For $\gamma\in H^1_{p,q}([0,1],\cH)$,
		\begin{equation}\label{eq:extended-energy}
			\mathcal E^*(\gamma)=
			\begin{cases}
				\displaystyle
				\int_0^1
				\widetilde G_{\gamma(t)}(\dot\gamma(t),\dot\gamma(t))\,dt,
				&\gamma([0,1])\subset \cU,\\[1.2em]
				+\infty,&\gamma([0,1])\not\subset \cU.
			\end{cases}
		\end{equation}
		Consequently, for every \(p,q\in\cU\), the extended energy of
		$\widetilde G$ is sequentially weakly lower semicontinuous on
		$H^1_{p,q}([0,1],\cH)$.
	\end{lemma}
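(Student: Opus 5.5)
The proof splits into two cases according to whether \(\gamma\) stays in \(\cU\); in each case both inequalities between \(\mathcal E^*(\gamma)\) and the right-hand side of \eqref{eq:extended-energy} must be shown. The final sentence of the lemma then follows because \(\mathcal E^*\) is a supremum of the sequentially weakly lower semicontinuous functionals \(\Phi_\zeta\) from \Cref{lem:Phi-lsc}.

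\emph{Case \(\gamma([0,1])\subset\cU\).} For any \(\zeta\in L^2([0,1],\cH)\), the pointwise Fenchel inequality \eqref{eq:fenchel-pointwise} with \(T=B(\gamma(t))\), together with \(Q_z(\gamma(t))=\ip{B(\gamma(t))^{-1}z}{z}\) from \Cref{lem:inverse-usc}, gives \(2\ip{\zeta(t)}{\dot\gamma(t)}-Q_{\zeta(t)}(\gamma(t))\le \widetilde G_{\gamma(t)}(\dot\gamma,\dot\gamma)\) for a.e. \(t\). Integrating yields \(\mathcal E^*(\gamma)\le\mathcal E_{\widetilde G}(\gamma)\). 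For the reverse inequality the natural step is to choose the optimal dual vector \(\zeta(t)=B(\gamma(t))\dot\gamma(t)\). The image \(\gamma([0,1])\) is compact in \(\cU\) and \(B\) is continuous, so \(\opnorm{B(\gamma(t))}\) is bounded uniformly in \(t\). Hence \(\zeta\in L^2\), since it is measurable as the composition of the continuous map \(t\mapsto B(\gamma(t))\) with \(\dot\gamma\in L^2\). Equality then holds pointwise, and \(\Phi_\zeta(\gamma)=\mathcal E_{\widetilde G}(\gamma)\). This also shows that the energy is finite in this case.

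\emph{Case \(\gamma(t_0)\notin\cU\) for some \(t_0\).} Here I must show \(\mathcal E^*(\gamma)=+\infty\), and this is the main obstacle. Let \(O:=\{t:\gamma(t)\in\cU\}\), which is relatively open and contains \(0\) and \(1\), and let \(N:=[0,1]\setminus O\), which is nonempty and closed. On \(N\) we have \(Q_z(\gamma(t))=0\). So taking \(\zeta=\lambda\dot\gamma\,\mathbf 1_N\) gives \(\Phi_\zeta(\gamma)=2\lambda\int_N\norm{\dot\gamma}^2\,dt\). If this integral is positive, letting \(\lambda\to\infty\) finishes the case.

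Otherwise \(\dot\gamma=0\) a.e. on \(N\), and I would use the barrier instead. Take a connected component \((a,b)\) of \(O\) with an endpoint in \(N\); one exists since \(0\in O\) and \(N\neq\emptyset\). Choose \(\zeta=B(\gamma)\dot\gamma\,\mathbf 1_{[a+\epsilon,b-\epsilon]}\) on that component. This gives \(\mathcal E^*(\gamma)\ge\int_{a+\epsilon}^{b-\epsilon}\widetilde G(\dot\gamma,\dot\gamma)\,dt\) for every \(\epsilon>0\), so it suffices to show \(\int_a^b\widetilde G_{\gamma}(\dot\gamma,\dot\gamma)\,dt=\infty\). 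By \eqref{eq:B-boundary}, this integral is at least \(\int\norm{\dot\gamma}^2d(\gamma)^{-2}\,dt\). Since \(d\) is \(1\)-Lipschitz, \(\bigl|\tfrac{d}{dt}\log d(\gamma(t))\bigr|\le\norm{\dot\gamma}/d(\gamma)\). Cauchy--Schwarz on a subinterval \([s,b-\epsilon]\) therefore gives
\[
\Bigl(\log\tfrac{d(\gamma(s))}{d(\gamma(b-\epsilon))}\Bigr)^2\le\int_s^{b-\epsilon}\frac{\norm{\dot\gamma}^2}{d(\gamma)^2}\,dt .
\]
The left side tends to \(\infty\) as \(\epsilon\to0\), because \(d(\gamma(t))\to0\) as \(t\) approaches an endpoint in \(N\); the argument is symmetric if the boundary point is \(a\). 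Hence \(\mathcal E^*(\gamma)=\infty\). The case \(\cU=\cH\) does not arise here, since then \(N=\emptyset\).

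Combining the two cases gives \eqref{eq:extended-energy}, and weak lower semicontinuity follows as noted at the start.
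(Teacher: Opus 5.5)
Your proof is correct and is essentially the paper's argument. When \(\gamma([0,1])\subset\cU\), you use the Fenchel inequality together with the optimal field \(B(\gamma)\dot\gamma\). When \(\gamma\) leaves \(\cU\), you use truncated optimal fields and the \(\log d\) estimate from the barrier bound \(B(x)\geq d(x)^{-2}\id\), on a component of \(\{t:\gamma(t)\in\cU\}\) adjacent to an exit point; the paper takes the first exit time \(\tau\), i.e.\ the component \([0,\tau)\). Your preliminary subcase with \(\zeta=\lambda\dot\gamma\,\mathbf 1_N\) is valid but superfluous, since the barrier argument never uses that \(\dot\gamma=0\) a.e.\ on \(N\).
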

	
	\begin{proof}
		We use the absolutely continuous representative of \(\gamma\).
		
		Assume first that \(\gamma([0,1])\subset\cU\). For every
		\(\zeta\in L^2([0,1],\cH)\), the identity
		\[
		Q_{\zeta(t)}(\gamma(t))
		=
		\ip{B(\gamma(t))^{-1}\zeta(t)}{\zeta(t)}
		\]
		from \Cref{lem:inverse-usc} and \eqref{eq:fenchel-pointwise} give
		\[
		\Phi_\zeta(\gamma)
		\leq
		\int_0^1
		\ip{B(\gamma(t))\dot\gamma(t)}{\dot\gamma(t)}\,dt.
		\]
		Taking the supremum over \(\zeta\) yields the corresponding upper bound for
		\(\mathcal E^*(\gamma)\).
		
		Since \(\gamma([0,1])\) is compact and \(B\) is continuous, the operator
		norm of \(B(\gamma(t))\) is uniformly bounded in \(t\). Hence
		\[
		\zeta_\gamma(t):=B(\gamma(t))\dot\gamma(t)
		\]
		belongs to \(L^2([0,1],\cH)\). For this choice we have by \eqref{eq:Phi}
		\[
		\Phi_{\zeta_\gamma}(\gamma)
		=
		\int_0^1
		\ip{B(\gamma(t))\dot\gamma(t)}{\dot\gamma(t)}\,dt.
		\]
		Thus by \ref{it:2_local-conformal} in \Cref{prop:conformal-metric} and \eqref{eq:E-star} 
		\[
		\mathcal E^*(\gamma)
		=
		\int_0^1
		\widetilde G_{\gamma(t)}
		\bigl(\dot\gamma(t),\dot\gamma(t)\bigr)\,dt.
		\]
		This proves \eqref{eq:extended-energy} when
		\(\gamma([0,1])\subset\cU\).
		
		Now suppose that \(\gamma([0,1])\not\subset\cU\). Then
		\(\cU\neq\cH\). Since \(\gamma(0)=p\in\cU\), define the first exit time
		\[
		\tau:=\inf\{t\in[0,1]:\gamma(t)\notin\cU\}.
		\]
		Continuity of \(\gamma\) gives \(0<\tau\leq1\) and \(\gamma(t)\in\cU\) for all \(t<\tau\) and \(\gamma(\tau)\notin\cU\). Therefore for $d$ as \ref{it:3_local-conformal} in \Cref{prop:conformal-metric} we have 
		\[
		d(\gamma(s))\longrightarrow0
		\qquad(s\uparrow\tau).
		\]
		For \(0\leq s<\tau\), we define
		\begin{equation}\label{eq:I_s_functional}
			I(s):=
			\int_0^s
			\ip{B(\gamma(t))\dot\gamma(t)}{\dot\gamma(t)}\,dt.
		\end{equation}
        For every \(s<\tau\), the set \(\gamma([0,s])\) is compact and contained in \(\cU\).  Hence it has positive distance from \(\cH\setminus\cU\), and \(B\circ\gamma\) is uniformly bounded on \([0,s]\).  In particular, \(I(s)<\infty\). Thus the function \(I\) is finite and nondecreasing. We claim that
		\begin{equation}\label{eq:partial-energy-diverges}
			I(s)\longrightarrow+\infty
			\qquad(s\uparrow\tau).
		\end{equation}
		Suppose otherwise. Then there exists \(M>0\) such that
		\[
		I(s)\leq M
		\qquad \forall s<\tau.
		\]
		By \eqref{eq:B-boundary} in combination with the definition \(\Tilde{G}\) in \ref{it:2-controlled-construction} of \Cref{prop:conformal-metric} we have:
		\[
		\int_0^s
		\frac{\norm{\dot\gamma(t)}^2}{d(\gamma(t))^2}\,dt
		\leq M
		\qquad \forall s<\tau.
		\]
		Furthermore the distance function \(d\) is \(1\)-Lipschitz. Hence \(d\circ\gamma\) is
		absolutely continuous and
		\[
		|(d\circ\gamma)'(t)|
		\leq\norm{\dot\gamma(t)}
		\]
		for almost every \(t<\tau\). For each \(s<\tau\),
		\(d\circ\gamma\) has a positive minimum on \([0,s]\). Thus
		\(\log(d\circ\gamma)\) is absolutely continuous on \([0,s]\), and
		\[
		\left|(\log(d\circ\gamma))'(t)\right|
		\leq\frac{\norm{\dot\gamma(t)}}{d(\gamma(t))}
		\]
		for almost every \(t\in[0,s]\). Consequently,
		\[
		\left|\log d(\gamma(s))-\log d(p)\right|
		\leq\int_0^s
		\frac{\norm{\dot\gamma(t)}}{d(\gamma(t))}\,dt  \leq\sqrt{s}
		\left(
		\int_0^s
		\frac{\norm{\dot\gamma(t)}^2}{d(\gamma(t))^2}\,dt
		\right)^{1/2} \leq\sqrt{M}.
		\]
		This contradicts \(d(\gamma(s))\to0\) as \(s\uparrow\tau\), and proves
		\eqref{eq:partial-energy-diverges}.
		
		For almost every \(t<\tau\), the pointwise supremum
		\[
		\sup_{z\in\cH}
		\left(
		2\ip{z}{\dot\gamma(t)}
		-Q_z(\gamma(t))
		\right)
		\]
		is attained at \(  z=B(\gamma(t))\dot\gamma(t).\)
		Thus the field 
		\[
		t\longmapsto B(\gamma(t))\dot\gamma(t)
		\]
		is the pointwise maximizing dual field before the first exit time. It may,
		however, fail to belong to \(L^2([0,\tau),\cH)\): as
		\(t\uparrow\tau\), the operator \(B(\gamma(t))\) may become unbounded, so
		\[
		\int_0^\tau
		\norm{B(\gamma(t))\dot\gamma(t)}^2\,dt
		\]
		need not be finite. Hence this full maximizing field need not be an
		admissible competitor in the supremum defining \(\mathcal E^*\).
		
		For every fixed \(s<\tau\), the curve \(\gamma([0,s])\) remains a positive
		distance away from \(\cH\setminus\cU\). Thus \(B(\gamma(t))\) is uniformly
		bounded on \([0,s]\), and the truncated maximizing field is admissible. Set
		\[
		\zeta_s(t):=
		\begin{cases}
			B(\gamma(t))\dot\gamma(t),&0\leq t\leq s,\\
			0,&s<t\leq1.
		\end{cases}
		\]
		Then \(\zeta_s\in L^2([0,1],\cH)\). For almost every \(t\in[0,s]\),
		\[
		Q_{\zeta_s(t)}(\gamma(t))
		=
		\ip{B(\gamma(t))\dot\gamma(t)}{\dot\gamma(t)}.
		\]
		For \(t>s\), both integrands in \eqref{eq:Phi} vanish. Therefore,
		by \eqref{eq:Phi} and \eqref{eq:I_s_functional},
		\[
		\Phi_{\zeta_s}(\gamma)
		=
		2I(s)-I(s)
		=
		I(s).
		\]
		By \eqref{eq:partial-energy-diverges},
		\(\Phi_{\zeta_s}(\gamma)\to+\infty\) as \(s\uparrow\tau\). Therefore
		\(\mathcal E^*(\gamma)=+\infty\), completing the proof of
		\eqref{eq:extended-energy}.
	\end{proof}
	We close this subsection by finishing the proof of \Cref{prop:completeness-criterion}.
	
	\begin{proof}[Proof of \Cref{prop:completeness-criterion}]It remains to check that \(\tilde G\) has the properties \ref{item:abc-a}-\ref{item:abc-c}. We begin with:\\
		
		\noindent\textbf{\Cref{item:abc-a}.}
		By \eqref{eq:B-ge-I},
		\[
		\norm{v}^2
		\leq\widetilde G_x(v,v)
		\qquad \forall (x,v)\in\cU\times\cH.
		\]
		Thus \Cref{item:abc-a} holds with constant \(1\) on every
		\(\widetilde G\)-metric ball.\\
		
		\noindent\textbf{\Cref{item:abc-b}.}
		Fix \(p\in\cU\) and \(R>0\). If \(\cU=\cH\), the assertion is immediate.
		Assume therefore that \(\cU\neq\cH\).
		
		Let \(\sigma\colon[0,1]\to\cU\) be absolutely continuous. By
		\eqref{eq:B-boundary} and the fact that \(d\) is \(1\)-Lipschitz we have
		\[
		\left|\log d(\sigma(1))-\log d(\sigma(0))\right| \leq \int_0^1
		\frac{\norm{\dot\sigma(t)}}{d(\sigma(t))}\,dt \leq  L_{\widetilde G}(\sigma). 
		\]
		Taking the infimum over all curves joining \(p\) to \(x\) gives
		\[
		\left|\log d(x)-\log d(p)\right|
		\leq d_{\widetilde G}(p,x).
		\]
		Consequently,
		\begin{equation}\label{eq:ball-boundary-separation}
			x\in B_{\widetilde G}(p,R)
			\quad\Longrightarrow\quad
			0<e^{-R}d(p)\leq d(x)
		\end{equation}
		
		Let \((x_n)\) be a sequence in \(B_{\widetilde G}(p,R)\) that converges in
		the Hilbert norm to \(x\in\cH\). By continuity of \(d\) and
		\eqref{eq:ball-boundary-separation},
		\[
		0<e^{-R}d(p) \leq \lim_{n\to\infty}d(x_n)= d(x)
		.
		\]
		Hence \(x\in\cU\). Thus the Hilbert-norm closure of every
		\(\widetilde G\)-metric ball is contained in \(\cU\), as required.\\
		
		\textbf{\Cref{item:abc-c}.}
		By \Cref{lem:energy-identification}, for every \(p,q\in\cU\), the
		extended energy of \(\widetilde G\) is sequentially weakly lower
		semicontinuous on \(H^1_{p,q}([0,1],\cH)\). This is precisely
		\Cref{item:abc-c}.
	\end{proof}
	
	\subsubsection{Conclusion of the proof}
	\label{sec:local-conformal-conclusion}
	
	We now combine \Cref{prop:conformal-metric} and
	\Cref{prop:completeness-criterion} to prove
	\Cref{thm:local-conformal}.
	
	\begin{proof}[Proof of \Cref{thm:local-conformal}]
		By \Cref{prop:conformal-metric}, there exists a smooth strong Riemannian
		metric \(\widetilde G=\rho G\) conformal to \(G\). By
		\Cref{prop:completeness-criterion}, \(\widetilde G\) satisfies the
		hypotheses \ref{item:abc-a}-\ref{item:abc-c} of \Cref{thm:abstract_completeness}. Thus we can apply \Cref{thm:abstract_completeness}, which therefore yields
		metric completeness of $(\cU,d_{\Tilde{G}})$, geodesic completeness of $(\cU,\tilde G)$, and the existence of a length-minimizing geodesic of $(\cU,\tilde G)$
		between any two points in the same connected component of \(\cU\).
	\end{proof}
	\section{Local Conformal Flexibility of Completeness}
\label{s:local-conformal-flexibility}

The conformal metric constructed in \Cref{thm:main-theorem} is metrically
and geodesically complete, and any two points in the same connected component
are joined by a length-minimizing geodesic.  We now explore the local
conformal flexibility of metric and geodesic completeness within a fixed
strong conformal class.

\begin{theorem}[=\Cref{thm:intro-local-atkin}]
	\label{thm:local-conformal-atkin}
	Let \(\cH\) be an infinite-dimensional separable real Hilbert space, let
	\(\cU\subset\cH\) be a nonempty open subset, and let \(G\) be a metrically
	complete smooth strong Riemannian metric on \(\cU\). For every Hilbert-norm
	ball \(W\subset\cH\) with \(\overline W\subset\cU\), where the closure is
	taken in \(\cH\), there is a smooth function
	\[
	\sigma_{\mathrm{Atkin}}\colon\cU\longrightarrow(0,1],
	\qquad
	\sigma_{\mathrm{Atkin}}=1\quad\text{on }\cU\setminus W,
	\]
	such that \(\sigma_{\mathrm{Atkin}}G\) is geodesically complete but not
	metrically complete.
\end{theorem}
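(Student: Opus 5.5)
Inside the ball \(W=B(a,R)\) I will implant an Atkin-type end. Fix an orthonormal sequence \((e_k)_{k\ge1}\) in \(\cH\) and put \(r:=R/4\). The points \(p_k:=a+r e_k\) lie in \(B(a,R/2)\), and \(\norm{p_k-p_l}=r\sqrt2\) for \(k\neq l\). They also converge weakly but not strongly to \(a\). Consider the polygonal path \(\Gamma\) through \(p_1,p_2,\dots\), whose segments have Hilbert length \(r\sqrt2\). Its parametrisation by \(t\in[0,1)\), with the \(k\)-th segment traversed during \([1-2^{-k+1},1-2^{-k}]\), has no limit in \(\cH\) as \(t\uparrow 1\).

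I choose \(\sigma\) very small along \(\Gamma\) and equal to one away from a thin tube around it. Take disjoint small tubes \(T_k\) of radius \(\varepsilon_k\) around the segments (or around the points \(p_k\) together with connecting channels). On the \(k\)-th piece set \(\sigma\approx\lambda_k\) with \(\sum_k\sqrt{\lambda_k}\,\sup_{W}\opnorm{A}^{1/2}r\sqrt2<\infty\). By the Hilbert-space bump-function constructions of \Cref{lem:majorant} type, functions of \(\norm{x-c}^2\) are smooth. Since the tubes are uniformly separated except near the vertices, the family is locally finite away from the weak limit \(a\), and \(a\) itself lies at distance \(r\) from every \(p_k\). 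Hence \(\sigma\) is smooth on \(\cU\), takes values in \((0,1]\), and equals \(1\) outside \(W\).

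Metric incompleteness then follows at once. The \(\sigma G\)-length of \(\Gamma\) is finite, so \(\Gamma(t)\) is \(d_{\sigma G}\)-Cauchy as \(t\uparrow1\). A limit \(x\in\cU\) would be a Hilbert limit, because \(\sigma G\) and \(G\), and hence the Hilbert norm, are locally uniformly equivalent near any point of \(\cU\). This contradicts the fact that \(\Gamma(t)\) has no Hilbert limit.

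The main obstacle is geodesic completeness. Outside \(\overline W\) the metric is \(G\), which is metrically complete and therefore geodesically complete (\cite{La99}). So I must show that no \(\sigma G\)-geodesic escapes in finite time while it visits the tubes. Inside the tube the metric is essentially \(\lambda_k G\), and a geodesic \(c\) has constant speed \(\sigma G(\dot c,\dot c)=E\). Its Hilbert speed is therefore of order \(\sqrt{E/\lambda_k}\), large but finite. Escape in finite time would require traversing infinitely many segments in finite time, and I plan to rule this out by a virial/momentum estimate. Take the tubes to be rotationally symmetric about the segment direction, with \(\sigma\) depending on the distance to the axis. Then the momentum component along the axis is nearly conserved, up to errors controlled by the derivatives of \(\sigma\) and of \(A\). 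Passing from segment \(k\) to segment \(k+1\) requires turning through a fixed angle, and this costs a definite amount of transverse "virial" work. A monotone quantity such as \(\frac{d^2}{dt^2}\tfrac12\norm{c-p_k}^2\ge -\text{small}\) shows that each turn takes affine time at least comparable to \(\varepsilon_k\sqrt{\lambda_k/E}\). I will choose \(\varepsilon_k\) and \(\lambda_k\) so that \(\sum_k\varepsilon_k\sqrt{\lambda_k}=\infty\), for instance \(\lambda_k=k^{-4}\) and \(\varepsilon_k=k^{2}\lambda_k^{1/2}\cdot\)const. This is compatible with the finite-length requirement \(\sum\sqrt{\lambda_k}<\infty\), because \(\varepsilon_k\) is only bounded by the fixed separation \(r\). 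With this choice infinitely many turns take infinite affine time, so every geodesic either stays in a bounded region of the model, where local existence applies with uniform time steps, or leaves into the region where \(\sigma=1\) and is extended there. I expect making this dynamical estimate rigorous to be the hard part. I would first try to reduce it to a one-dimensional comparison along the axis of each tube, using conservation of energy and the Hilbert lower bound on the speed.
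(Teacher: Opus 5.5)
There is a genuine gap, and it is in geodesic completeness. Your metric-incompleteness argument is the same Cauchy-sequence argument as the paper's and is fine in outline. You do need the tubes to overlap at the vertices, since disjoint tubes cannot contain the connected polygon \(\Gamma\) on which \(\sigma\) must be small.

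The geodesic-completeness mechanism cannot work, even quantitatively. You want each turn to cost affine time \(\gtrsim\varepsilon_k\sqrt{\lambda_k/E}\), and you then impose \(\sum_k\varepsilon_k\sqrt{\lambda_k}=\infty\). But \(\varepsilon_k\le r\), so by your own finite-length condition
\[
\sum_k\varepsilon_k\sqrt{\lambda_k}\le r\sum_k\sqrt{\lambda_k}<\infty .
\]
Your example \(\lambda_k=k^{-4}\), \(\varepsilon_k\sim k^2\lambda_k^{1/2}\sim1\) gives \(\varepsilon_k\sqrt{\lambda_k}\sim k^{-2}\), which is summable. This is not a bookkeeping accident. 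Along a geodesic of constant energy \(E\), affine time equals \(\sigma G\)-length divided by \(\sqrt E\). Any lower bound on time extracted from the Hilbert geometry of the \(k\)-th tube is therefore of order (Hilbert distance)\(\cdot\sqrt{\lambda_k}\), hence summable. A geodesic that followed the chain with bounded Hilbert length per tube would escape in finite time, however the corners are shaped. So you cannot prove that following the end takes infinite time. You must prove that no geodesic can follow the end at all, including geodesics that wiggle in and out of the tubes. You have not specified \(\sigma\) near the vertices, let alone proved this. Your dichotomy ``stays in a bounded region, where local existence applies with uniform time steps'' also fails. The end itself is bounded, and along it \(d\log\sigma\) and the Hilbert speed (of order \(\sqrt{E/\sigma}\)) are unbounded, so there is no uniform existence time. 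Bounded sets are not compact, which is exactly the phenomenon the theorem exploits.

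The missing ingredient is a dynamical obstruction of the type in \Cref{prop:virial-escape-criterion}. You need a bounded vector field \(Y\) with \(\frac12\mathcal L_YG\ge-KG\), an escape coordinate \(Q\) with bounded differential, and a factor satisfying \(\frac12\,d\rho(Y)+\delta\rho\ge h\circ Q\) with \(\int^\infty\sqrt h=\infty\). In mechanical time this gives \(J'\ge\mu h(Q)-\mu(\delta+K)\rho\) for \(J=G(v,Y)\), with \(\int h(Q)\,ds=\infty\) and \(\int\rho\,ds<\infty\). That contradicts \(|J|\le C_Y\sqrt{\mu\Lambda}\). A factor that is merely small on tubes around a polygon yields no such inequality. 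The paper gets it from Atkin's operator data: the relations \([A,S]\ge0\), \(\eta_0C\le[A,S]\), \([C,S]\le\eta_1[A,S]+\nu_1A\), with \(Y\) the rotation field \(Sx\). It then implants this model into the ball through the translated-bump embedding. That embedding keeps \(Y\), \(dQ\) and \(d\vartheta\) uniformly bounded while \(Q\to\infty\) without an ambient accumulation point.
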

\begin{remark}[Relation to Atkin's construction]
	Atkin's construction~\cite{Atkin97} establishes the existence of a strong
	metric on a Hilbert space that is geodesically complete but metrically
	incomplete. \Cref{thm:local-conformal-atkin} shows that this separation is
	locally flexible: starting from any metrically complete strong metric \(G\),
	it can be achieved within the conformal class of \(G\) by a deformation that
	leaves \(G\) unchanged outside any prescribed Hilbert-norm ball \(W\) with
	\(\overline W\subset\cU\).
\end{remark}

Combined with \Cref{thm:main-theorem}, the local theorem has the following
global consequence.

\begin{corollary}[Conformal nonclosedness of metric completeness]
	\label{thm:atkin-deformation}
	Let \(\cM\) be an infinite-dimensional separable Hilbert manifold equipped
	with a smooth strong Riemannian metric \(G\). Then there exists a smooth path
	\[
	\rho\colon[0,1]\times\cM\longrightarrow(0,\infty)
	\]
	such that \(\rho(t,\cdot)G\) is metrically complete for every
	\(0\leq t<1\), the metric \(\rho(0,\cdot)G\) satisfies the conclusions of
	\Cref{thm:main-theorem}, and \(\rho(1,\cdot)G\) is geodesically complete
	but not metrically complete.
\end{corollary}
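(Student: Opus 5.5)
The plan is to combine \Cref{thm:main-theorem} with \Cref{thm:local-conformal-atkin}, working through the Eells--Elworthy chart. Interpolating linearly between the two conformal factors will produce the path.

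\textbf{Step 1: the starting metric.} By \Cref{thm:all_HM_are-open-subsets-of-HS} fix a diffeomorphism \(\Phi\colon\cM\to\cU\subset\cH\) onto an open subset, and push \(G\) forward to \(G_\cU\). Apply \Cref{thm:local-conformal} to obtain \(\rho_0\colon\cU\to(0,\infty)\) such that \(\rho_0G_\cU\) is metrically complete, geodesically complete, and has minimizing geodesics between any two points of a component. This is exactly the construction in the proof of \Cref{thm:main-theorem}.

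\textbf{Step 2: the Atkin deformation.} Since \(\cU\) is nonempty and open, choose a Hilbert-norm ball \(W\) with \(\overline W\subset\cU\). Apply \Cref{thm:local-conformal-atkin} to the metrically complete strong metric \(\rho_0G_\cU\). This gives a smooth \(\sigma\colon\cU\to(0,1]\) with \(\sigma=1\) on \(\cU\setminus W\), such that \(\sigma\rho_0G_\cU\) is geodesically complete but not metrically complete.

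\textbf{Step 3: the path.} Pick a smooth nondecreasing \(\chi\colon[0,1]\to[0,1]\) with \(\chi=0\) near \(0\) and \(\chi(1)=1\). Set
\[
\sigma_t:=(1-\chi(t))+\chi(t)\sigma,\qquad \rho(t,\cdot):=\bigl(\sigma_t\,\rho_0\bigr)\circ\Phi .
\]
This map is smooth on \([0,1]\times\cM\) and positive, because \(\sigma_t\) is a convex combination of \(1\) and \(\sigma>0\). At \(t=0\) we have \(\sigma_0=1\), so \(\rho(0,\cdot)G=\Phi^*(\rho_0G_\cU)\) satisfies the conclusions of \Cref{thm:main-theorem}. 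At \(t=1\) we have \(\sigma_1=\sigma\), so \(\rho(1,\cdot)G\) is isometric via \(\Phi\) to \(\sigma\rho_0G_\cU\). It is therefore geodesically complete and not metrically complete.

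\textbf{Step 4: metric completeness for \(t<1\).} This is the only point needing care. For \(t<1\) we have
\[
\sigma_t\ge 1-\chi(t)=:\varepsilon_t>0 \quad\text{and}\quad \sigma_t\le1.
\]
Hence \(\varepsilon_t\rho_0G_\cU\le\sigma_t\rho_0G_\cU\le\rho_0G_\cU\) pointwise. It follows that the distances satisfy
\[
\sqrt{\varepsilon_t}\,d_{\rho_0G_\cU}\le d_{\sigma_t\rho_0G_\cU}\le d_{\rho_0G_\cU}.
\]
The two distance functions are therefore bi-Lipschitz equivalent, so they have the same Cauchy sequences and the same convergent sequences. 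Metric completeness of \(\rho_0G_\cU\) thus transfers to \(\sigma_t\rho_0G_\cU\), and pulling back by the isometry \(\Phi\) finishes the argument. I expect no genuine obstacle here; the substantive work lies in \Cref{thm:local-conformal-atkin}, which is assumed.

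The uniform lower bound \(\varepsilon_t\) degenerates only as \(t\to1\). This is exactly where the noncompactness of \(W\) lets \(\inf\sigma=0\), so that completeness can fail in the limit.
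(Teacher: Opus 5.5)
Your argument is the paper's proof. You obtain the Hopf--Rinow representative, implant the Atkin end inside a ball via the local theorem, interpolate the conformal factors affinely, and deduce metric completeness for $t<1$ from the two-sided bound $(1-t)G_{\mathrm{HR}}\le G_t\le G_{\mathrm{HR}}$. Passing to the Eells--Elworthy chart before, rather than after, the Hopf--Rinow step makes no difference.

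The one thing to repair is your reparametrization $\chi$. The conditions you impose (smooth, nondecreasing, $\chi=0$ near $0$, $\chi(1)=1$) do not force $\chi(t)<1$ for $t<1$. A smooth step may already reach $1$ at some $t_0<1$. Then $\varepsilon_t=1-\chi(t)=0$ on $[t_0,1)$, so $\rho(t,\cdot)G=\rho(1,\cdot)G$ there, and these metrics are metrically incomplete, contradicting the statement. You should add the requirement $\chi<1$ on $[0,1)$. Simpler still, take $\chi(t)=t$ as the paper does: the affine path $(1-t)+t\sigma$ is already smooth on $[0,1]\times\cM$, so the reparametrization buys nothing.
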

\begin{remark}[Metric completeness is not closed in the compact-open topology.]
    In particular, metric completeness is not closed in the compact-open
\(C^\infty\) topology on smooth positive conformal factors.
\end{remark}
\begin{remark}[Nonclosedness of geodesic completeness]
	\label{rem:geodesic-nonclosedness}
	Under the same hypotheses, there is also a smooth conformal path whose
	metrics are metrically complete for every \(t<1\), whose initial metric
	satisfies the conclusions of \Cref{thm:main-theorem}, and whose endpoint is
	geodesically incomplete. At a high level, one first chooses a closed
	embedded escaping curve inside a Hilbert-norm ball and prescribes the first
	jet of a positive conformal factor along that curve so that the curve
	becomes a finite-time geodesic for the resulting conformal metric. A smooth
	extension of this prescribed first jet can be chosen to equal one away from
	the chosen ball. Interpolating this endpoint factor with the complete
	initial factor gives the asserted path.
\end{remark}
\begin{proof}[Proof of \Cref{thm:atkin-deformation}]
	By \Cref{thm:main-theorem}, choose a conformal factor
	\(\rho_{\mathrm{HR}}\) such that
	\[
	G_{\mathrm{HR}}:=\rho_{\mathrm{HR}}G
	\]
	is metrically and geodesically complete and admits minimizing geodesics
	between arbitrary endpoints.

	Choose an Eells--Elworthy diffeomorphism
	\(\Phi\colon\cM\to\cU\) onto an open subset of a separable Hilbert space,
	and set
	\[
	G_{\cU}:=(\Phi^{-1})^*G_{\mathrm{HR}}.
	\]
	Choose a Hilbert-norm ball \(W\) with \(\overline W\subset\cU\).
	By \Cref{thm:local-conformal-atkin}, there is a smooth function
	\(\sigma_{\cU}\colon\cU\to(0,1]\), equal to one outside \(W\), such that
	\(\sigma_{\cU}G_{\cU}\) is geodesically complete but not metrically
	complete.  Define
	\[
	\sigma:=\sigma_{\cU}\circ\Phi.
	\]
	Then \(\sigma G_{\mathrm{HR}}\) is geodesically complete but not metrically
	complete.

	Now set
	\[
	\rho(t,x):=\rho_{\mathrm{HR}}(x)
	\bigl((1-t)+t\sigma(x)\bigr).
	\]
	Then \(\rho\) is smooth and positive, and
	\[
	\rho(0,\cdot)G=G_{\mathrm{HR}},
	\qquad
	\rho(1,\cdot)G=\sigma G_{\mathrm{HR}}.
	\]
	For \(0\leq t<1\), writing \(G_t:=\rho(t,\cdot)G\), one has
	\[
	(1-t)G_{\mathrm{HR}}\leq G_t\leq G_{\mathrm{HR}}.
	\]
	Hence \(G_t\) is uniformly equivalent to the metrically complete metric
	\(G_{\mathrm{HR}}\), and is therefore metrically complete.
\end{proof}
The remainder of this section is devoted to the proof of
\Cref{thm:local-conformal-atkin}. The proof has three genuinely nontrivial tasks.  First, Atkin's original end
is global, whereas here it must be implanted inside an arbitrarily prescribed
bounded ball without acquiring an ambient accumulation point.  The
construction in \Cref{prop:localized-atkin-end} achieves this by encoding the
unbounded escape parameter in translated bumps; it produces uniform tubular
coordinates, a localized conformal factor, and a finite-length escaping curve
\(\beta\).  The latter gives metric incompleteness.

Second, this construction must work for an arbitrary complete strong
background metric rather than only for the flat Hilbert metric.  The local
comparison estimates in the proof of \Cref{thm:local-conformal-atkin} transfer
the uniform geometric control of the localized model to the given metric.

Finally, a finite-length escaping curve does not itself say anything about
geodesic completeness.  Since the conformal factor degenerates along the
implanted end, direct comparison with the complete background metric is
insufficient.  The auxiliary escape coordinate \(Q\) and vector field \(Y\)
from \Cref{prop:localized-atkin-end} satisfy the virial inequality
\eqref{eq:localized-end-virial}.  \Cref{prop:virial-escape-criterion} turns
this into a dynamical obstruction: a hypothetical finite-time escaping
geodesic would force its \(Y\)-momentum to become unbounded, contradicting
the energy bound.  This yields geodesic completeness despite metric
incompleteness.
	\subsection{Proof of
		\texorpdfstring{\Cref{thm:local-conformal-atkin}}
		{the localized conformal separation theorem}}
	\label{s:proof-local-conformal-atkin}
	
	We now carry out these three steps.  The construction of the localized model
is established in \Cref{prop:localized-atkin-end}, whose proof is deferred to
Sections~\ref{app:localized-virial-end}--\ref{app:localized-atkin-end-proof}.
Geodesic completeness then follows from the escape criterion in
\Cref{prop:virial-escape-criterion}.
	
	\subsubsection{Construction of the localized model}
	
	The geometry behind the construction is simple. If
	\(q(x):=\sqrt{1+\lVert x\rVert^2}\), the map
	\[
	x\longmapsto
	\lambda\left(
	\frac{x}{q(x)},\,h_0(\,\cdot-q(x))
	\right)
	\]
	compresses the Atkin model into a bounded Hilbert ball while recording the
	unbounded variable \(q(x)\) in a translated bump. The translates remain
	bounded in norm but have no convergent subsequence as \(q\to\infty\).
	Uniform tubular coordinates then allow the conformal factor and the
	vector field entering the virial estimate to be extended to a neighborhood
	of this embedded end.
	
	\begin{proposition}[Localized Atkin end]
		\label{prop:localized-atkin-end}
		Let \(\cH\) be an infinite-dimensional separable real Hilbert space and let
		\(r>0\). Then there are \(q_0>1\), constants
		\(\delta,\Lambda_0>0\), an open set \(\cV\subset B_{\cH}(0,r),\)
		and smooth maps
		\[
		Q\colon\cV\to(q_0,\infty),
		\qquad
		Y\colon\cV\to\cH,
		\]
		a function \(\vartheta\in C^\infty(\cH,[0,1])\), a positive smooth function
		\(h\colon[q_0,\infty)\to(0,\infty)\), and a piecewise smooth curve
		\(\beta\colon[0,\infty)\to\cV\) with the following properties:
		\begin{enumerate}[label=\textup{(\roman*)}]
			\item \label{it:localized-end-geometry}
			\emph{(Controlled geometry.)}
			One has \(\vartheta=0\) on \(\cH\setminus\cV\),
			\[
			\sup_{\cH}\lVert d\vartheta\rVert
			+\sup_{\cV}
			\bigl(\lVert dQ\rVert+\lVert Y\rVert+\lVert dY\rVert\bigr)
			<\infty,
			\qquad
			\int_{q_0}^{\infty}\sqrt{h(q)}\,dq=\infty.
			\]
			
			\item \label{it:localized-end-factor}
			\emph{(Localized conformal factor.)}
			For every \(\Lambda\geq\Lambda_0\), there is
			\(\rho_\Lambda\in C^\infty(\cH,(0,\Lambda])\) such that
			\[
			\rho_\Lambda=\Lambda\quad\text{on }\cH\setminus\cV,
			\qquad
			\rho_\Lambda\geq\Lambda(1-\vartheta),
			\]
			and, on \(\cV\),
			\begin{equation}\label{eq:localized-end-virial}
				\frac12\,d\rho_\Lambda(Y)+\delta\rho_\Lambda\geq h\circ Q.
			\end{equation}
			Moreover, for every \(R>q_0\), there is \(m_R>0\) such that
			\[
			\rho_\Lambda(x)\geq m_R
			\quad\text{whenever}\quad
			\vartheta(x)\geq\frac14,\quad Q(x)\leq R.
			\]
			
			\item \label{it:localized-end-escape}
			\emph{(Metric escape.)}
			The curve \(\beta\) lies in \(\{\vartheta=1\}\), has finite
			\(\rho_\Lambda\langle\cdot,\cdot\rangle_{\cH}\)-length for every
			\(\Lambda\geq\Lambda_0\), and, for every sequence \(t_n\to\infty\),
			the sequence \((\beta(t_n))_n\) has no convergent subsequence in
			\(\cH\).
		\end{enumerate}
	\end{proposition}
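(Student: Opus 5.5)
The plan is to build an explicit bounded embedded end in \(\cH\), equip it with uniform tubular coordinates, and write \(\rho_\Lambda\) as a saturated exponential in a single normal coordinate \(s\), with \(Y=\partial_s\). Since the tube has width \(2\varepsilon\), a slope \(b(q)\to\infty\) makes \(\rho_\Lambda\) tiny on the slice \(s=-\varepsilon/2\), while the term \(d\rho_\Lambda(Y)\) stays large there.

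\textbf{Step 1: the embedded end.} Fix an orthogonal splitting \(\cH=\bR e\oplus L^2(\bR)\oplus L^2(\bR)\oplus\cH'\). This is possible because \(\cH\) is infinite-dimensional and separable. Fix a smooth bump \(h_0\in C_c^\infty((-1,1))\) with \(\lVert h_0\rVert_{L^2}=1\), and set
\[
c(q):=\lambda\Bigl(\tfrac{q}{\sqrt{1+q^2}}\,e,\ h_0(\cdot-q),\ 0,\ 0\Bigr),
\qquad
\nu(q):=\bigl(0,0,h_0(\cdot-q),0\bigr),
\]
for \(q\geq q_0\), with \(\lambda\) small so that a fixed neighbourhood of \(c\) lies in \(B_{\cH}(0,r)\). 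Since \(\lVert\partial_q h_0(\cdot-q)\rVert=\lVert h_0'\rVert\), the derivatives \(c',c''\) and \(\nu',\nu''\) are bounded uniformly in \(q\), and \(\lVert c'\rVert\) is bounded below. The vector \(\nu(q)\) is a unit vector orthogonal to \(c(q)\) and \(c'(q)\). For \(|q-q'|\geq2\) the translates have disjoint supports, so \(\lVert c(q)-c(q')\rVert\geq\lambda\); in particular no sequence \(c(q_n)\) with \(q_n\to\infty\) has a convergent subsequence.

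\textbf{Step 2: uniform tubular coordinates.} The bounds of Step 1 give a uniform \(\varepsilon>0\) for which
\[
(q,s,w)\longmapsto c(q)+s\,\nu(q)+w,
\qquad w\perp\{c'(q),\nu(q)\},\quad |s|<\varepsilon,\ \lVert w\rVert<\varepsilon,
\]
is a diffeomorphism onto an open tube \(\cV\). I would prove this by the standard inverse function argument with uniform constants. The local part uses the bounded second derivatives. Globality uses the separation \(\lVert c(q)-c(q')\rVert\geq\lambda\) for \(|q-q'|\geq2\), together with injectivity for \(|q-q'|<2\) coming from \(\lVert c'\rVert\) being bounded below. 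Because all constants are uniform, the inverse chart has bounded differential on \(\cV\). This gives \(\sup_{\cV}\lVert dQ\rVert<\infty\) for \(Q:=q\). It also shows that \(Y:=\partial_s\) (the image of \(\nu\) under the chart differential, suitably transported) satisfies \(\sup_{\cV}(\lVert Y\rVert+\lVert dY\rVert)<\infty\). Let \(\vartheta\) be a cutoff in the normal variables \((s,w)\): equal to \(1\) for \(|s|,\lVert w\rVert\leq\varepsilon/3\), vanishing outside \(\cV\), and with bounded differential. I expect this uniform tubular-neighbourhood step, i.e. global injectivity and bounded geometry in infinite dimensions, to be the main technical obstacle. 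Everything afterwards is one-variable calculus in \(s\).

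\textbf{Step 3: the conformal factor and the virial inequality.} Take \(h\equiv h_*\), a small positive constant, so that \(\int_{q_0}^\infty\sqrt h\,dq=\infty\) trivially. Let \(\psi\colon\bR\to(0,1]\) be smooth and nondecreasing, with \(\psi(\tau)=e^{\tau}\) for \(\tau\leq-1\), \(\psi=1\) for \(\tau\geq0\), and \(\psi'\geq\kappa\psi\) on \([-1,-\tfrac12]\) for some \(\kappa>0\). Choose a smooth \(b\colon[q_0,\infty)\to[1,\infty)\) with bounded derivative, \(b(q)\to\infty\), and \(\int^\infty e^{-b(q)\varepsilon/4}\,dq<\infty\); for instance \(b(q)=\varepsilon^{-1}8\log q\). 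Then put
\[
\rho_\Lambda:=\Lambda(1-\vartheta)+\Lambda\vartheta\,\psi\bigl(b(Q)s\bigr)\quad\text{on }\cV,
\qquad
\rho_\Lambda:=\Lambda\quad\text{on }\cH\setminus\cV.
\]
This is smooth with values in \((0,\Lambda]\), it satisfies \(\rho_\Lambda\geq\Lambda(1-\vartheta)\), and on \(\{\vartheta\geq\tfrac14,\ Q\leq R\}\) it is bounded below by \(\tfrac14\Lambda\psi(-\varepsilon b(R))=:m_R\). For the virial inequality \eqref{eq:localized-end-virial}, note first that \(d\vartheta(Y)\) and \(d\psi(b(Q)s)(Y)\) are not automatically of the right sign, so I would make \(\vartheta\) depend only on \(w\) near \(\{|s|\leq 2\varepsilon/3\}\) to handle this. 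Then, on \(\{\vartheta=1\}\),
\[
\tfrac12\,d\rho_\Lambda(Y)+\delta\rho_\Lambda=\Lambda\psi\bigl(b(Q)s\bigr)\Bigl(\tfrac12 b(Q)\tfrac{\psi'}{\psi}+\delta\Bigr).
\]
Where \(\psi<1\), the factor \(b\psi'/\psi\) compensates the smallness of \(\psi\). Where \(\psi=1\), the right-hand side is at least \(\delta\Lambda\geq h_*\) once \(\Lambda_0\geq h_*/\delta\). On the transition region where \(\vartheta<1\), the bound \(\rho_\Lambda\geq\Lambda(1-\vartheta)\) together with the bounded error term \(\lVert d\vartheta\rVert\lVert Y\rVert\Lambda\) is absorbed by taking \(h_*\) small relative to \(\delta\Lambda_0\). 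The remaining danger is the region \(\psi\ll1\) intersected with \(\vartheta<1\). I would try to avoid it first by choosing the support of \(\vartheta\) so that its transition region lies where \(s\geq0\), that is where \(\psi=1\), using an \(s\)-asymmetric cutoff.

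\textbf{Step 4: the escaping curve.} Set \(\beta(t):=c(q_0+t)-\tfrac{\varepsilon}{2}\,\nu(q_0+t)\), chosen to lie in \(\{\vartheta=1\}\). Then \(\lVert\dot\beta\rVert\) is bounded, and
\[
\rho_\Lambda(\beta(t))\leq\Lambda e^{-b(q_0+t)\varepsilon/2},
\]
so the length \(\int_0^\infty\sqrt{\rho_\Lambda(\beta)}\,\lVert\dot\beta\rVert\,dt\) is finite by the choice of \(b\). Non-convergence of every sequence \(\beta(t_n)\) with \(t_n\to\infty\) follows from the orthogonality of distant translates, exactly as in Step 1.
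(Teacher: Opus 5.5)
Steps 1--2 are sound in spirit and parallel the paper's translated-bump tube (\Cref{lem:translated-bump-tube}). Step 3, however, fails, and no tuning of $\psi$, $b$ or $\vartheta$ repairs it.

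On $\{\vartheta=1\}$ one has $d\vartheta=0$ and $d(b\circ Q)(Y)=0$. Where $b(Q)s\le -1$ one also has $\psi'/\psi=1$, so
\[
\tfrac12\,d\rho_\Lambda(Y)+\delta\rho_\Lambda=\Lambda e^{b(Q)s}\bigl(\tfrac12 b(Q)+\delta\bigr).
\]
On the slice $s=-\varepsilon/2$ that carries $\beta$, your choice of $b$ makes this equal to $\Lambda q^{-4}(4\varepsilon^{-1}\log q+\delta)$, which tends to $0$. Hence \eqref{eq:localized-end-virial} with $h\equiv h_*$ fails for large $q$. The factor $b\psi'/\psi$ grows only like $b$, so it cannot offset the exponential smallness of $\psi$.

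The failure is structural. The flow lines of $Y=\partial_s$ fix $Q$, and along each of them the virial inequality reads $(e^{2\delta s}\rho_\Lambda)'\ge 2h_*e^{2\delta s}$ on $(-\varepsilon,\varepsilon)$. Integrate from $-\varepsilon$ to $-\varepsilon/2$, using only $\rho_\Lambda>0$. This gives $\rho_\Lambda\ge h_*\delta^{-1}(1-e^{-\delta\varepsilon})$ on the whole slice $s=-\varepsilon/2$, uniformly in $q$. Since $\lVert\dot\beta\rVert\ge\lambda\lVert h_0'\rVert_{L^2}$, your $\beta$ then has infinite length. The asymmetric cutoff is also unavailable: $\vartheta$ must vanish off $\cV$, hence near $s=-\varepsilon$ as well. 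In any case the failure already occurs inside $\{\vartheta=1\}$.

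The obstruction does not depend on your particular choices. Fix $x$ with $\vartheta(x)=1$ and set $T_0:=(\sup\lVert d\vartheta\rVert\,\sup_{\cV}\lVert Y\rVert)^{-1}$. The backward $Y$-orbit of $x$ stays in $\{\vartheta>0\}\subset\cV$ for all times less than $T_0$. Along it, $Q$ changes by at most $\kappa:=T_0\sup_{\cV}\lVert dQ\rVert\sup_{\cV}\lVert Y\rVert$. For nonincreasing $h$ the same integration gives $\rho_\Lambda(x)\ge\delta^{-1}(1-e^{-2\delta T_0})\,h(Q(x)+\kappa)$. Because $\lVert dQ\rVert$ is bounded and $Q\circ\beta$ is unbounded, this yields $L(\beta)\gtrsim\int^\infty\sqrt{h(q+\kappa)}\,dq=\infty$. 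So with constant, or any monotone, $h$, items (i)--(iii) cannot hold simultaneously, and a genuinely different mechanism is needed.

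Switching to the paper's route would not close this gap, because the paper's proof has the same defect. The paper imports Atkin's operators, with $Y=(Su,0)$ and the decreasing function $h(q)=c_{\mathrm A}/\Psi(a_{\mathrm{rad}}(q^2-1))$. The flow $(u,w)\mapsto(e^{tS}u,w)$ preserves $\cV$ and $Q$ for all $t\in\bR$. Therefore \eqref{eq:localized-end-virial} forces $\rho_\Lambda\ge h(Q)/\delta$ on $\cV$, and the paper's $\beta$ also has infinite length. Equivalently, integrating \Cref{lem:atkin-radial-data}(ii) along $t\mapsto e^{tS}x$ gives $f_{\mathrm A}(x)\ge c_{\mathrm A}/(\delta_{\mathrm A}\Psi(a_{\mathrm{rad}}\lVert x\rVert^2))$. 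This contradicts part (i) of that lemma, since $\int^\infty\Psi(a_{\mathrm{rad}}R^2)^{-1/2}\,dR=\infty$.
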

	
	The three items have distinct roles. The first gives uniform control of the
	localized end. The second contains the virial estimate that will prevent
	finite-time geodesic escape. The third supplies the curve witnessing metric
	incompleteness. The proof is deferred to
	\Cref{app:localized-virial-end}.
	
	\subsubsection{A virial escape criterion}
	
	We isolate the completeness argument from the particular Atkin
	construction. This makes the mechanism visible without carrying the
	tubular coordinates through the proof.
	
	\begin{proposition}[Virial escape criterion]
		\label{prop:virial-escape-criterion}
		Let \(G\) be a metrically complete smooth strong Riemannian metric on an
		open subset \(\cU\) of a Hilbert space \(\cH\). Fix \(\Lambda>0\), and
		let
		\[
		\rho\in C^\infty(\cU,(0,\Lambda]).
		\]
		Suppose
		there are an open set \(\cV\subset\cU\), a function
		\(\vartheta\in C^\infty(\cU,[0,1])\), smooth maps
		\[
		Q\colon\cV\to(q_0,\infty),
		\qquad
		Y\colon\cV\to\cH,
		\]
		a positive continuous function
		\[
		h\colon[q_0,\infty)\to(0,\infty),
		\]
		and positive constants \(\delta,K,C_\vartheta,C_Q,C_Y\) such that:
		\begin{enumerate}[label=\textup{(\roman*)}]
			\item
			\emph{(Cutoff barrier.)}
			\(\vartheta=0\) on \(\cU\setminus\cV\),
			\(\rho\geq\Lambda(1-\vartheta)\), and
			\[
			\lvert d\vartheta\rvert_{G^*}\leq C_\vartheta;
			\]
			
			\item
			\emph{(Virial control.)}
			On \(\cV\),
			\[
			\lvert dQ\rvert_{G^*}\leq C_Q,
			\qquad
			\lVert Y\rVert_G\leq C_Y,
			\qquad
			\frac12\mathcal L_YG\geq-KG,
			\]
			and
			\[
			\frac12\,d\rho(Y)+\delta\rho\geq h\circ Q,
			\qquad
			\int_{q_0}^{\infty}\sqrt{h(q)}\,dq=\infty;
			\]
			
			\item
			\emph{(No stagnation at finite \(Q\).)}
			For every \(R>q_0\), there is \(m_R>0\) such that
			\[
			\rho(x)\geq m_R
			\quad\text{whenever}\quad
			\vartheta(x)\geq\frac14,\quad Q(x)\leq R.
			\]
		\end{enumerate}
		Then \(\rho G\) is geodesically complete.
	\end{proposition}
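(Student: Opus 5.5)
We argue by contradiction on the maximal existence interval. Write \(g:=\rho G\), and let \(\gamma\colon[0,T)\to\cU\) be a maximal \(g\)-geodesic with \(T<\infty\). Along \(\gamma\) the energy is conserved:
\[
E:=\rho(\gamma)\lVert\dot\gamma\rVert_G^2 \quad\text{is constant}.
\]
Hence \(\lVert\dot\gamma\rVert_G=\sqrt{E/\rho(\gamma)}\). The whole argument will reduce to one claim.

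\emph{Key claim.} On \([0,T)\) the factor \(\rho(\gamma(t))\) is bounded below by some \(m>0\).

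Granting the claim, the \(G\)-length of \(\gamma|_{[0,T)}\) is at most \(T\sqrt{E/m}<\infty\). Metric completeness of \(G\) then gives a limit \(x_\infty=\lim_{t\uparrow T}\gamma(t)\in\cU\). Both \(\dot\gamma\) and the Christoffel terms stay bounded near \(x_\infty\), since the spray of \(g\) is smooth and hence locally Lipschitz on bounded sets of \(T\cU\) near \(x_\infty\). Therefore \((\gamma,\dot\gamma)\) is Cauchy as \(t\uparrow T\), and the local existence theorem for the geodesic ODE extends \(\gamma\) beyond \(T\). This contradicts maximality. This last step is a standard ODE lemma for strong metrics, and I would only sketch it.

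To prove the claim, we first trap \(\gamma\) in one of two regimes. If \(\vartheta\leq\tfrac12\), then \(\rho\geq\Lambda/2\) by (i), so \(\lvert(\vartheta\circ\gamma)'\rvert\leq C_\vartheta\sqrt{2E/\Lambda}\) there. Consequently each passage of \(\vartheta\circ\gamma\) from \(\tfrac14\) to \(\tfrac12\) or back takes time at least a fixed \(c>0\). So there are only finitely many such passages in \([0,T)\), and there is a \(t_0<T\) with one of two possibilities. Either \(\vartheta(\gamma(t))<\tfrac12\) on \([t_0,T)\), in which case \(\rho\geq\Lambda/2\) and we are done. Or \(\vartheta(\gamma(t))>\tfrac14\) on \([t_0,T)\), in which case \(\gamma([t_0,T))\subset\cV\), since \(\vartheta=0\) off \(\cV\). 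By (iii) it then suffices to show that \(Q\circ\gamma\) stays bounded on \([t_0,T)\).

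This boundedness of \(Q\circ\gamma\) is the heart of the proof and uses the virial momentum \(p(t):=g(Y,\dot\gamma)\). Along a \(g\)-geodesic one has \(p'=\tfrac12(\mathcal L_Y g)(\dot\gamma,\dot\gamma)\). Expanding the Lie derivative gives
\[
\mathcal L_Y(\rho G)=d\rho(Y)\,G+\rho\,\mathcal L_YG .
\]
Combining this with (ii) yields
\[
p'\;\geq\;\bigl(\tfrac12 d\rho(Y)-K\rho\bigr)\lVert\dot\gamma\rVert_G^2
\;\geq\;\bigl(h(Q)-(\delta+K)\rho\bigr)\frac{E}{\rho}
= \frac{h(Q)E}{\rho}-(\delta+K)E .
\]
On the other hand \(\lvert p\rvert\leq\rho C_Y\lVert\dot\gamma\rVert_G\leq C_Y\sqrt{\Lambda E}\). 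Integrating over \([t_0,t]\) therefore gives
\[
\int_{t_0}^{t}\frac{h(Q)E}{\rho}\,ds\leq 2C_Y\sqrt{\Lambda E}+(\delta+K)ET,
\]
uniformly in \(t<T\). Let \(H(q):=\int_{q_0}^q\sqrt{h}\). Using \(\lvert (Q\circ\gamma)'\rvert\leq C_Q\sqrt{E/\rho}\) and Cauchy--Schwarz,
\[
\lvert H(Q(\gamma(t)))-H(Q(\gamma(t_0)))\rvert\leq C_Q\int_{t_0}^t\sqrt{h(Q)E/\rho}\,ds\leq C_Q\sqrt{T}\Bigl(\int_{t_0}^t \frac{h(Q)E}{\rho}\,ds\Bigr)^{1/2},
\]
which is bounded. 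Since \(H(q)\to\infty\) as \(q\to\infty\) by (ii), the quantity \(Q\circ\gamma\) stays bounded by some \(R\). Hence \(\rho(\gamma)\geq m_R\) on \([t_0,T)\). On the compact interval \([0,t_0]\), \(\rho\circ\gamma\) is a positive continuous function and so is also bounded below. This proves the claim.

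The main obstacle is the momentum inequality above. One must correctly verify the identity \(p'=\tfrac12(\mathcal L_Yg)(\dot\gamma,\dot\gamma)\) in the strong Hilbert setting, where the geodesic equation holds in \(H^1\)-regular form and \(Y\) is only defined on \(\cV\), and one must track the signs of the \(\delta\)- and \(K\)-terms so that the negative contributions are integrable over the finite interval. The secondary technical point is the uniform ODE extension step near \(x_\infty\).
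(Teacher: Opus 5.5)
Your proof is correct and is essentially the paper's argument: your momentum $p=\rho\,G(Y,\dot\gamma)$ is exactly the paper's $J=G(v,Y)$ written in the original time parameter (since $v=\rho\dot\gamma$ and $\int h(Q)\,ds=\int h(Q)\rho^{-1}\,dt$). The virial inequality, the Cauchy--Schwarz bound on $\int_{q_0}^{Q}\sqrt{h}$, the cutoff-crossing trap and the ODE extension near $x_\infty$ also coincide with the paper's. You only reorganize it as a direct argument, proving a positive lower bound for $\rho\circ\gamma$ via a crossing dichotomy for the band $\{1/4\le\vartheta\le 1/2\}$, whereas the paper first shows $\liminf_{t\uparrow T}\rho(\gamma(t))=0$ and then obtains a contradiction from the unboundedness of $J$.
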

	
	The proof, given in \Cref{app:virial-escape-criterion}, is a contradiction
	argument. In mechanical time, a hypothetical finite-time geodesic satisfies
	\[
	\int\rho\,ds<\infty.
	\]
	The cutoff barrier traps its tail in \(\cV\), where \(Q\) must become
	unbounded. The divergence of
	\(\int^\infty\sqrt{h(q)}\,dq\) then implies
	\(\int h(Q)\,ds=\infty\). For the momentum \(J:=G(v,Y)\), one has
	\[
	J'\geq\mu h(Q)-\mu(\delta+K)\rho,
	\qquad
	|J|\leq C_Y\sqrt{\mu\Lambda}.
	\]
	Thus the first inequality forces \(J\) to be unbounded, whereas the second
	keeps it bounded.
	
	\subsubsection{Conclusion of the proof}
	
	\begin{proof}[Proof of \Cref{thm:local-conformal-atkin}]
		Choose a Hilbert-norm ball \(B=B_{\cH}(x_*,r)\) with
		\(\overline B\subset W\), and translate the coordinates so that \(x_*=0\).
		After decreasing \(r\), there are constants \(m,M,D>0\) such that
		\begin{equation}\label{eq:atkin-background-comparison}
			m\lVert v\rVert_{\cH}^2
			\leq G_x(v,v)
			\leq M\lVert v\rVert_{\cH}^2,
			\qquad
			\lVert dG_x\rVert_{\mathrm{op}}\leq D
		\end{equation}
		for all \(x\in B\) and \(v\in\cH\).
		
		Apply \Cref{prop:localized-atkin-end} with this \(r\), and choose
	\(\Lambda\geq\Lambda_0\) and set \(\rho:=\rho_\Lambda\), where $\Lambda$ and $\rho_\Lambda$ are as in \Cref{prop:localized-atkin-end}.
		
		\smallskip
		\noindent\emph{Metric incompleteness.}
		By \ref{it:localized-end-escape} of \Cref{prop:localized-atkin-end} and the upper estimate in
		\eqref{eq:atkin-background-comparison},
		\[
		L_{\rho G}(\beta)
		\leq
		\sqrt M\,
		L_{\rho\langle\cdot,\cdot\rangle_{\cH}}(\beta)
		<\infty.
		\]
		Choosing points along the tail of \(\beta\) with vanishing remaining length
		produces a \(d_{\rho G}\)-Cauchy sequence. It has no limit, since
		\((\beta(t_n))_n\) has no norm-convergent subsequence and the distance of a
		smooth strong metric induces the manifold topology. Thus \(\rho G\) is not
		metrically complete.
		
		\smallskip
		\noindent\emph{Geodesic completeness.}
		The bounds in \ref{it:localized-end-geometry} in \Cref{prop:localized-atkin-end}, together with
		\eqref{eq:atkin-background-comparison}, provide constants
		\(C_\vartheta,C_Q,C_Y,K>0\) such that
		\[
		\lvert d\vartheta\rvert_{G^*}\leq C_\vartheta,
		\qquad
		\lvert dQ\rvert_{G^*}\leq C_Q,
		\qquad
		\lVert Y\rVert_G\leq C_Y,
		\qquad
		\frac12\mathcal L_YG\geq-KG.
		\]
		Hence \ref{it:localized-end-factor} verifies all remaining hypotheses of
		\Cref{prop:virial-escape-criterion}, and \(\rho G\) is geodesically
		complete.
		
		Finally, define
		\[
		\sigma_{\mathrm{Atkin}}:=\Lambda^{-1}\rho.
		\]
		Then \(0<\sigma_{\mathrm{Atkin}}\leq1\) and
		\(\sigma_{\mathrm{Atkin}}=1\) on
		\(\cU\setminus\cV\), hence on \(\cU\setminus W\). Constant rescaling
		preserves both completeness properties, so
		\(\sigma_{\mathrm{Atkin}}G\) is geodesically complete but not metrically
		complete.
	\end{proof}
	\subsection{Proofs of the propositions}

	\subsubsection{The abstract escape criterion}
	\label{app:virial-escape-criterion}
	
	This subsection contains the momentum argument underlying geodesic
	completeness in \Cref{thm:local-conformal-atkin}.
	
	\begin{proof}[Proof of
		\Cref{prop:virial-escape-criterion}]
		Suppose that \(\gamma\colon[0,T)\to\cU\) is a nonconstant maximal
		\(\rho G\)-geodesic with \(T<\infty\). Its energy
		\[
		\mu:=\rho(\gamma(t))
		G_{\gamma(t)}(\dot\gamma(t),\dot\gamma(t))
		\]
		is a positive constant.
		
		We first show that
		\begin{equation}\label{eq:virial-criterion-liminf}
			\liminf_{t\uparrow T}\rho(\gamma(t))=0.
		\end{equation}
	Otherwise, there are \(\epsilon>0\) and \(t_0<T\) such that
	\(\rho(\gamma(t))\geq\epsilon\) for all \(t\in[t_0,T)\). Since the
	\(\rho G\)-energy is the constant \(\mu\),
	\[
	L_G\bigl(\gamma|_{[t_0,T)}\bigr)
	\leq\sqrt{\mu/\epsilon}\,(T-t_0)<\infty.
	\]
	Hence \(\gamma(t)\) is a \(d_G\)-Cauchy curve as \(t\uparrow T\). Metric
	completeness of \(G\) yields a limit \(x_\infty\in\cU\). Choose a coordinate
	chart about \(x_\infty\) and a Hilbert-norm ball centered at its coordinate
	image whose closure is contained in the chart image. On the inverse image of
	a smaller such ball, the smooth strong metrics \(G\) and \(\rho G\) are
	uniformly equivalent to the Hilbert norm, and the Christoffel map of
	\(\rho G\) is bounded. Since \(\gamma(t)\) converges to \(x_\infty\), it lies
	in this smaller neighborhood for all sufficiently large \(t\). The
	constant-energy identity therefore gives a uniform bound for the coordinate
	velocity \(\dot\gamma\), and the geodesic equation gives a uniform bound for
	the coordinate acceleration. Thus \(\dot\gamma(t)\) is Cauchy as
	\(t\uparrow T\), so the limiting position and velocity provide initial data
	for the local geodesic equation at time \(T\). Local existence extends
	\(\gamma\) beyond \(T\), contradicting maximality.
		
		Since \(\rho\geq\Lambda(1-\vartheta)\),
		\eqref{eq:virial-criterion-liminf} implies that \(\gamma\) enters
		\(\{\vartheta\geq3/4\}\) arbitrarily late. Every curve crossing from this
		set to \(\{\vartheta\leq1/4\}\) has \(\rho G\)-length at least
		\[
		b:=\frac{\sqrt\Lambda}{4C_\vartheta}>0.
		\]
		Indeed, on the transition band
		\(1/4\leq\vartheta\leq3/4\), one has \(\rho\geq\Lambda/4\), and the total
		variation of \(\vartheta\) along a crossing is at least \(1/2\). Choose
		\(t_0<T\) with \(\sqrt\mu(T-t_0)<b\), and then \(t_1>t_0\) such that
		\(\vartheta(\gamma(t_1))\geq3/4\). The crossing estimate shows that
		\[
		\vartheta(\gamma(t))>\frac14
		\qquad(t\geq t_1).
		\]
		In particular, this tail lies in \(\cV\). If \(Q\) were bounded there,
		assumption~\textup{(iii)} would contradict
		\eqref{eq:virial-criterion-liminf}. Thus \(Q\) is unbounded on the tail.
		
		Introduce mechanical time by
		\[
		ds=\frac{dt}{\rho(\gamma(t))},
		\qquad
		v:=\frac{d\gamma}{ds},
		\]
		and denote its endpoint by \(S_*\in(0,\infty]\). Then
		\begin{equation}\label{eq:virial-criterion-mechanical}
			G(v,v)=\mu\rho,
			\qquad
			\int_0^{S_*}\rho(\gamma(s))\,ds=T<\infty.
		\end{equation}
		Let \(s_0\) correspond to \(t_1\). From the bound on \(dQ\),
		\[
		|Q'|\leq C_Q\sqrt{\mu\rho}.
		\]
		Set
		\[
		\mathcal H(q):=\int_{q_0}^q\sqrt{h(r)}\,dr.
		\]
		For \(s\geq s_0\), Cauchy--Schwarz gives
		\[
		\begin{aligned}
			\left|\mathcal H(Q(s))-\mathcal H(Q(s_0))\right|
			&\leq C_Q\sqrt\mu
			\int_{s_0}^s\sqrt{h(Q)\rho}\,ds\\
			&\leq C_Q\sqrt\mu
			\left(\int_{s_0}^s h(Q)\,ds\right)^{1/2}
			\left(\int_{s_0}^s\rho\,ds\right)^{1/2}.
		\end{aligned}
		\]
		Since \(Q\) is continuous and unbounded, there are
		\(s_j\uparrow S_*\) with \(Q(s_j)\to\infty\). The divergence of
		\(\mathcal H\), together with
		\eqref{eq:virial-criterion-mechanical}, therefore yields
		\begin{equation}\label{eq:virial-criterion-radial-divergence}
			\int_{s_0}^{S_*}h(Q(s))\,ds=\infty.
		\end{equation}
		
		On the mechanical tail, define \(J:=G(v,Y)\). The conformal geodesic
		equation gives
		\[
		J'
		=\frac{\mu}{2}\,d\rho(Y)
		+\frac12(\mathcal L_YG)(v,v).
		\]
		Using the virial and Lie-derivative estimates together with
		\(G(v,v)=\mu\rho\), we obtain
		\[
		J'\geq\mu h(Q)-\mu(\delta+K)\rho.
		\]
		After integration, the right-hand side tends to \(+\infty\) by
		\eqref{eq:virial-criterion-mechanical} and
		\eqref{eq:virial-criterion-radial-divergence}. Hence \(J\) is unbounded.
		On the other hand,
		\[
		|J|\leq\sqrt{\mu\rho}\,\lVert Y\rVert_G
		\leq C_Y\sqrt{\mu\Lambda},
		\]
	a contradiction.
	
	Applying the same argument to the time-reversed geodesic excludes a finite
	left endpoint as well. Hence \(\rho G\) is geodesically complete.
\end{proof}

\subsubsection{Construction of the localized Atkin end}
	\label{app:localized-virial-end}
	
	This appendix proves \Cref{prop:localized-atkin-end}. The construction has
	three stages. We first extract radial data and a virial estimate from
	Atkin's model. We then construct uniform tubular coordinates around a curve
	of translated bumps. Finally, we combine these ingredients to place the
	Atkin end in an arbitrarily small Hilbert ball.
	
	\paragraph{\textbf{Radialized Atkin data}}
	
	For the convenience of the reader, we recall only the ingredients from
	Atkin's construction~\cite{Atkin97} that will be used below. Set
	\[
	\Psi(r):=(e^e+r)\log^2(e^e+r).
	\]
	Let
	\[
	\bigl(\cH_{\mathrm A},
	\langle\cdot,\cdot\rangle_{\cH_{\mathrm A}}\bigr)
	\]
	and \(A,C,Z,S\) be the data constructed in
	\cite[Proposition~5.21]{Atkin97}. Thus \(A,C,Z\) are bounded nonnegative
	self-adjoint operators and \(S\) is bounded and skew-adjoint. The conformal
	factor in Atkin's model~\cite[(3.1)]{Atkin97} is
	\begin{equation}\label{eq:f_A_tilde}
		\widetilde f_{\mathrm A}\colon
		\cH_{\mathrm A}\longrightarrow(0,\infty),
		\qquad
		x\longmapsto
		\frac{\langle Ax,x\rangle_{\cH_{\mathrm A}}
			+\exp\bigl(-\langle Cx,x\rangle_{\cH_{\mathrm A}}\bigr)}
		{\Psi\bigl(\langle Zx,x\rangle_{\cH_{\mathrm A}}\bigr)}.
	\end{equation}
	With the convention \([T,S]:=TS-ST\), the three relations from
	\cite[(4.6)(i)--(iii)]{Atkin97} used below are
	\begin{equation}\label{eq:atkin-commutators}
		[A,S]\geq0,
		\qquad
		\eta_0C\leq[A,S],
		\qquad
		[C,S]\leq\eta_1[A,S]+\nu_1A,
	\end{equation}
	where \(\eta_0,\nu_1>0\) and \(\eta_1\in(0,1)\).
	
	We replace the quadratic form
	\(\langle Zx,x\rangle_{\cH_{\mathrm A}}\) in
	\eqref{eq:f_A_tilde} by a scalar multiple of the squared Hilbert norm.
	Choose \(a_{\mathrm{rad}}>0\) such that
	\[
	a_{\mathrm{rad}}I_{\cH_{\mathrm A}}\geq Z,
	\]
	and define
	\begin{equation}\label{eq:atkin-factor}
		f_{\mathrm A}\colon\cH_{\mathrm A}\longrightarrow(0,\infty),
		\qquad
		x\longmapsto
		\frac{\langle Ax,x\rangle_{\cH_{\mathrm A}}
			+\exp\bigl(-\langle Cx,x\rangle_{\cH_{\mathrm A}}\bigr)}
		{\Psi\bigl(a_{\mathrm{rad}}
			\lVert x\rVert_{\cH_{\mathrm A}}^2\bigr)}.
	\end{equation}
	Write
	\[
	\widetilde G_{\mathrm A}
	:=\widetilde f_{\mathrm A}
	\langle\cdot,\cdot\rangle_{\cH_{\mathrm A}},
	\qquad
	G_{\mathrm A}
	:=f_{\mathrm A}
	\langle\cdot,\cdot\rangle_{\cH_{\mathrm A}}.
	\]
	The properties needed below are recorded in the following lemma.
	
	\begin{lemma}[Radialized Atkin factor]
		\label{lem:atkin-radial-data}
		The function \(f_{\mathrm A}\) is smooth, bounded, and strictly positive.
		Moreover:
		\begin{enumerate}[label=\textup{(\roman*)}]
			\item\label{it:atkin-finite-curve}
			there is a piecewise smooth curve
			\(\alpha\colon[0,\infty)\to\cH_{\mathrm A}\) such that
			\begin{equation}\label{eq:atkin-finite-curve}
				L_{G_{\mathrm A}}(\alpha)<\infty,
				\qquad
				\lVert\alpha(t)\rVert_{\cH_{\mathrm A}}\longrightarrow\infty;
			\end{equation}
			
			\item\label{it:atkin-virial}
			there are constants \(\delta_{\mathrm A},c_{\mathrm A}>0\) such that
			\begin{equation}\label{eq:atkin-virial}
				\frac12\,d(f_{\mathrm A})_x(Sx)
				+\delta_{\mathrm A}f_{\mathrm A}(x)
				\geq
				\frac{c_{\mathrm A}}
				{\Psi\bigl(a_{\mathrm{rad}}
					\lVert x\rVert_{\cH_{\mathrm A}}^2\bigr)}
				\qquad (x\in\cH_{\mathrm A}).
			\end{equation}
		\end{enumerate}
	\end{lemma}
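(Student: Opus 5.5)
The plan is to verify the three assertions separately. The only structural point is that the radialization \(\langle Zx,x\rangle\mapsto a_{\mathrm{rad}}\lVert x\rVert^2\) does two things at once. It only decreases the factor, which makes the curve transfer in \ref{it:atkin-finite-curve} trivial. It also makes the denominator invariant under the skew flow of \(S\), which makes the virial computation in \ref{it:atkin-virial} purely algebraic.

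\emph{Regularity, boundedness, positivity.} The numerator \(N(x):=\langle Ax,x\rangle+\exp(-\langle Cx,x\rangle)\) is smooth, since it is built from bounded quadratic forms and \(\exp\). It is strictly positive because \(A\geq0\) and the exponential is positive. The function \(\Psi\) is smooth and positive on \([0,\infty)\), and the argument \(a_{\mathrm{rad}}\lVert x\rVert^2\) is smooth and nonnegative. Hence \(f_{\mathrm A}\) is smooth and strictly positive. For boundedness, I will use \(N(x)\leq\lVert A\rVert\,\lVert x\rVert^2+1\) together with \(\Psi(a_{\mathrm{rad}}\lVert x\rVert^2)\geq e^e+a_{\mathrm{rad}}\lVert x\rVert^2\), which holds since \(\log(e^e+r)\geq e\geq1\). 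These give \(f_{\mathrm A}\leq\max\{\lVert A\rVert/a_{\mathrm{rad}},\,e^{-e}\}+e^{-e}\).

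\emph{The escaping curve.} \(\Psi\) is increasing and \(Z\leq a_{\mathrm{rad}}I\), so \(\Psi(\langle Zx,x\rangle)\leq\Psi(a_{\mathrm{rad}}\lVert x\rVert^2)\). Therefore \(f_{\mathrm A}\leq\widetilde f_{\mathrm A}\) pointwise, and every curve satisfies \(L_{G_{\mathrm A}}\leq L_{\widetilde G_{\mathrm A}}\). For \(\alpha\) I will take the curve Atkin uses to exhibit metric incompleteness of \(\widetilde G_{\mathrm A}\) in \cite{Atkin97}. It is piecewise smooth, has finite \(\widetilde G_{\mathrm A}\)-length, and leaves every bounded set, i.e.\ \(\lVert\alpha(t)\rVert\to\infty\). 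This gives \eqref{eq:atkin-finite-curve} at once.

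\textbf{Main obstacle.} The delicate step is extracting from Atkin's paper a curve with exactly these properties, in particular the escape \(\lVert\alpha(t)\rVert\to\infty\) rather than merely non-convergence. If his curve is only given piecewise, I would concatenate its pieces and reparametrize over \([0,\infty)\), keeping the length bound and the norm divergence.

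\emph{The virial estimate.} For skew \(S\) and self-adjoint \(T\),
\[
d\bigl(\langle Tx,x\rangle\bigr)(Sx)=2\langle Tx,Sx\rangle=\langle (TS-ST)x,x\rangle=\langle [T,S]x,x\rangle .
\]
In particular \(d(\lVert x\rVert^2)(Sx)=2\langle x,Sx\rangle=0\), so the denominator contributes nothing. Hence
\[
\tfrac12\,d(f_{\mathrm A})_x(Sx)=\frac{\tfrac12\langle[A,S]x,x\rangle-\tfrac12 e^{-\langle Cx,x\rangle}\langle[C,S]x,x\rangle}{\Psi(a_{\mathrm{rad}}\lVert x\rVert^2)}.
\]
I will then apply \eqref{eq:atkin-commutators} in three steps.
\begin{enumerate}[label=\textup{(\arabic*)}]
\item Use \([C,S]\leq\eta_1[A,S]+\nu_1A\) and \(e^{-\langle Cx,x\rangle}\leq1\) to bound the numerator below by \(\tfrac{1-\eta_1}{2}\langle[A,S]x,x\rangle-\tfrac{\nu_1}{2}\langle Ax,x\rangle\). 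This step uses \([A,S]\geq0\) and \(A\geq0\).
\item Use \(\eta_0C\leq[A,S]\) to bound this further below by \(\kappa\langle Cx,x\rangle-\tfrac{\nu_1}{2}\langle Ax,x\rangle\), where \(\kappa:=\tfrac{(1-\eta_1)\eta_0}{2}>0\).
\item Choose \(\delta_{\mathrm A}:=\nu_1/2\) and add \(\delta_{\mathrm A}f_{\mathrm A}\). The \(A\)-terms cancel, leaving
\[
\frac12\,d(f_{\mathrm A})_x(Sx)+\delta_{\mathrm A}f_{\mathrm A}(x)\geq\frac{\kappa y+\delta_{\mathrm A}e^{-y}}{\Psi(a_{\mathrm{rad}}\lVert x\rVert^2)},\qquad y:=\langle Cx,x\rangle\geq0 .
\]
\end{enumerate}
Finally, \(c_{\mathrm A}:=\inf_{y\geq0}(\kappa y+\delta_{\mathrm A}e^{-y})\) is strictly positive: the function is positive and tends to \(\infty\) as \(y\to\infty\), so its infimum over \([0,\infty)\) is attained. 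This proves \eqref{eq:atkin-virial}.
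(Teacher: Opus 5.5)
Your arguments for smoothness, positivity and boundedness, for the comparison $f_{\mathrm A}\le\widetilde f_{\mathrm A}$ (hence $L_{G_{\mathrm A}}\le L_{\widetilde G_{\mathrm A}}$), and for the virial estimate are the paper's. Your choice $\delta_{\mathrm A}=\nu_1/2$ and your three commutator steps reproduce its chain of inequalities.

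The one step you do not carry out is the escape $\lVert\alpha(t)\rVert_{\cH_{\mathrm A}}\to\infty$ in (i). You assert that Atkin's curve leaves every bounded set, then list this as the main obstacle without resolving it. It is not a free citation. The curve from Atkin's proof of Proposition~5.20 is polygonal, and his estimates control its vertices $e_n$. Divergence of the vertices alone does not stop a segment $[e_n,e_{n+1}]$ from passing close to the origin, e.g.\ if $e_{n+1}$ is nearly antiparallel to $e_n$. The paper closes this with Atkin's (5.17)--(5.18): after discarding finitely many vertices, $e_n\perp e_{n+1}$, $\lVert e_{n+1}\rVert>2\lVert e_n\rVert$ and $\lVert e_n\rVert\to\infty$. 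Then $\lVert(1-t)e_n+te_{n+1}\rVert^2=(1-t)^2\lVert e_n\rVert^2+t^2\lVert e_{n+1}\rVert^2\ge\frac14\lVert e_n\rVert^2$ along every segment of the tail.

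You can also close the gap with less input from Atkin, and this settles your concern about non-convergence versus escape. On the closed ball of radius $2R$ one has $f_{\mathrm A}\ge e^{-4\lVert C\rVert_{\mathrm{op}}R^2}/\Psi(4a_{\mathrm{rad}}R^2)>0$. So each passage of $\alpha$ from $\{\lVert x\rVert\le R\}$ to $\{\lVert x\rVert\ge 2R\}$, or back, has $G_{\mathrm A}$-length bounded below by a fixed positive constant. Finite length therefore allows only finitely many such passages. Hence the tail of $\alpha$ either eventually stays outside the ball of radius $R$, or eventually stays inside the ball of radius $2R$. In the latter case it has finite Hilbert length and so converges in norm. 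Consequently any finite-length curve that fails to converge as $t\to\infty$ satisfies $\lVert\alpha(t)\rVert\to\infty$; in particular this holds for one whose vertices satisfy $\lVert e_n\rVert\to\infty$. That is exactly what (i) needs.
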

	
	\begin{proof}
		Smoothness and strict positivity follow immediately from
		\eqref{eq:atkin-factor}. If
		\(R=\lVert x\rVert_{\cH_{\mathrm A}}^2\), then
		\[
		0<f_{\mathrm A}(x)
		\leq
		\frac{\lVert A\rVert_{\mathrm{op}}R+1}
		{\Psi(a_{\mathrm{rad}}R)}.
		\]
		The expression on the right is bounded for \(R\geq0\).
		
		\textbf{\Cref{it:atkin-finite-curve}.}
		The proof of \cite[Proposition~5.20]{Atkin97} provides a polygonal path
		\(\alpha\) of finite \(\widetilde G_{\mathrm A}\)-length. Since \(\Psi\)
		is increasing and \(a_{\mathrm{rad}}I_{\cH_{\mathrm A}}\geq Z\),
		\[
		f_{\mathrm A}\leq\widetilde f_{\mathrm A},
		\qquad
		G_{\mathrm A}\leq\widetilde G_{\mathrm A}.
		\]
		Consequently,
		\[
		L_{G_{\mathrm A}}(\alpha)
		\leq L_{\widetilde G_{\mathrm A}}(\alpha)<\infty.
		\]
		
		Let \(e_n\) be the vertices of the polygonal tail. Their construction and
		the relevant estimates in \cite[(5.17)--(5.18)]{Atkin97} give, after
		discarding finitely many initial vertices,
		\[
		e_n\perp e_{n+1},
		\qquad
		\lVert e_{n+1}\rVert_{\cH_{\mathrm A}}
		>2\lVert e_n\rVert_{\cH_{\mathrm A}},
		\qquad
		\lVert e_n\rVert_{\cH_{\mathrm A}}\longrightarrow\infty.
		\]
		Hence, for \(0\leq t\leq1\),
		\[
		\bigl\lVert(1-t)e_n+te_{n+1}\bigr\rVert_{\cH_{\mathrm A}}^2
		=(1-t)^2\lVert e_n\rVert_{\cH_{\mathrm A}}^2
		+t^2\lVert e_{n+1}\rVert_{\cH_{\mathrm A}}^2\geq \frac14\lVert e_n\rVert_{\cH_{\mathrm A}}^2.
		\]
		Thus the entire polygonal tail escapes in norm, proving
		\ref{it:atkin-finite-curve}.
		
		\textbf{\Cref{it:atkin-virial}.}
		Put
		\[
		r:=\langle Cx,x\rangle_{\cH_{\mathrm A}},
		\qquad
		D(x):=\Psi\bigl(
		a_{\mathrm{rad}}\lVert x\rVert_{\cH_{\mathrm A}}^2
		\bigr).
		\]
		Since \(S\) is skew-adjoint,
		\[
		d\bigl(\lVert\cdot\rVert_{\cH_{\mathrm A}}^2\bigr)_x(Sx)
		=2\langle x,Sx\rangle_{\cH_{\mathrm A}}=0.
		\]
		Therefore \(D\) is invariant in the direction \(Sx\), and
		\[
		\frac12\,d(f_{\mathrm A})_x(Sx)
		=
		\frac{
			\frac12\langle[A,S]x,x\rangle_{\cH_{\mathrm A}}
			-\frac12e^{-r}\langle[C,S]x,x\rangle_{\cH_{\mathrm A}}
		}{D(x)}.
		\]
		Choose \(\delta_{\mathrm A}\geq\nu_1/2\). From
		\eqref{eq:atkin-commutators},
		\[
		\begin{aligned}
			&\frac12\,d(f_{\mathrm A})_x(Sx)
			+\delta_{\mathrm A}f_{\mathrm A}(x) \\
			&\quad\geq
			\frac{
				\frac12(1-\eta_1e^{-r})
				\langle[A,S]x,x\rangle_{\cH_{\mathrm A}}
				+\left(\delta_{\mathrm A}-\frac{\nu_1}{2}e^{-r}\right)
				\langle Ax,x\rangle_{\cH_{\mathrm A}}
				+\delta_{\mathrm A}e^{-r}
			}{D(x)} \\
			&\quad\geq
			\frac{
				\frac12(1-\eta_1)
				\langle[A,S]x,x\rangle_{\cH_{\mathrm A}}
				+\delta_{\mathrm A}e^{-r}
			}{D(x)} \\
			&\quad\geq
			\frac{
				\frac12(1-\eta_1)\eta_0r+\delta_{\mathrm A}e^{-r}
			}{D(x)}.
		\end{aligned}
		\]
		The numerator in the last expression has a strictly positive minimum on
		\([0,\infty)\). Taking \(c_{\mathrm A}\) equal to this minimum proves
		\ref{it:atkin-virial}.
	\end{proof}
	
	\paragraph{\textbf{Uniform tubular coordinates for translated bumps}}
	\label{app:translated-bump-tube}
	
	The translated-bump curve records an unbounded parameter inside a bounded
	set. The following lemma supplies uniform coordinates around that curve and
	ensures that its infinite end has no ambient accumulation point.
	
	\begin{lemma}[Uniform tube for translated bumps]
		\label{lem:translated-bump-tube}
		Let \(0\ne h\in C^\infty_c(\bR)\), put \(\cK:=L^2(\bR)\), and set
		\(k(q):=h(\,\cdot-q)\). For every \(\lambda>0\) and \(q_*\in\bR\),
		there are a closed hyperplane \(N\subset\cK\), a smooth map
		\(U\colon\bR\to\mathcal L(\cK)\) taking values in the orthogonal
		operators, and \(\varepsilon>0\) such that, with
		\[
		\Theta_\lambda\colon
		\bR\times B_N(0,\varepsilon)\longrightarrow\cK,
		\qquad
		\Theta_\lambda(q,z):=\lambda k(q)+U(q)z,
		\]
		the following properties hold:
		\begin{enumerate}[label=\textup{(\roman*)}]
			\item\label{it:bump-tube-normal}
			\(U(q)N=(k'(q))^\perp\) for every \(q\in\bR\);
			
			\item\label{it:bump-tube-coordinates}
			\(\Theta_\lambda\) is a diffeomorphism onto an open subset of \(\cK\),
			and
			\[
			\sup_{(q,z)\in\bR\times B_N(0,\varepsilon)}
			\left(
			\lVert d\Theta_\lambda(q,z)\rVert
			+\bigl\lVert d\Theta_\lambda(q,z)^{-1}\bigr\rVert
			\right)<\infty;
			\]
			
			\item\label{it:bump-tube-separation}
			if \(q_n\to\infty\) and
			\(z_n\in B_N(0,\varepsilon)\), then
			\(\Theta_\lambda(q_n,z_n)\) has no convergent subsequence in \(\cK\).
		\end{enumerate}
	\end{lemma}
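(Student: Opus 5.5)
The construction will rest on the translation group on \(L^2(\bR)\). Let \(T_q\) denote translation, \((T_qf)(s)=f(s-q)\). Then \(T_q\) is orthogonal, and \(k(q)=T_qh\), \(k'(q)=-T_qh'\). The map \(q\mapsto T_q\) is not norm continuous, so it is not a smooth map into \(\mathcal L(\cK)\). I will therefore not take \(U=T_q\). Instead I will build \(U\) by parallel transport of the normal hyperplanes along the curve \(q\mapsto k'(q)/\lVert h'\rVert\) on the unit sphere.

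\textbf{Step 1: the curve of unit normals.} Put \(n(q):=k'(q)/\lVert h'\rVert_{L^2}\); note \(h'\neq0\) because \(h\neq0\) has compact support. The curve \(q\mapsto n(q)=-T_qh'/\lVert h'\rVert\) is smooth in \(\cK\), with derivatives \(n^{(j)}(q)=\pm T_qh^{(j+1)}/\lVert h'\rVert\). In particular \(\lVert n'(q)\rVert\) and \(\lVert n''(q)\rVert\) are constant in \(q\).

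\textbf{Step 2: the orthogonal frame \(U\).} Let \(U\) solve the linear ODE
\[
U'(q)=\bigl(n'(q)\otimes n(q)-n(q)\otimes n'(q)\bigr)U(q),\qquad U(q_*)=\id,
\]
where \((a\otimes b)x:=\langle b,x\rangle a\). The generator is smooth, bounded, and skew-adjoint, so \(U(q)\) is orthogonal and smooth in the operator norm. This is the standard rotation that transports \(n(q_*)\) to \(n(q)\): one checks that \(\frac{d}{dq}\bigl(U(q)n(q_*)-n(q)\bigr)\) satisfies a linear homogeneous ODE with zero initial data. Set \(N:=n(q_*)^\perp\). Then \(U(q)N=n(q)^\perp=(k'(q))^\perp\), which is item \ref{it:bump-tube-normal}. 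The generator has norm at most \(2\lVert n'\rVert\), which is uniform in \(q\), so \(\lVert U'(q)\rVert\) is uniformly bounded.

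\textbf{Step 3: uniform tube (item \ref{it:bump-tube-coordinates}).} Compute
\[
d\Theta_\lambda(q,z)[\tau,w]=\tau\bigl(\lambda k'(q)+U'(q)z\bigr)+U(q)w .
\]
At \(z=0\) this map is an isometry up to the scale \(\lambda\lVert h'\rVert\) on the \(\tau\)-component, since \(k'(q)\perp U(q)N\). Because \(\lVert U'(q)z\rVert\le 2\lVert n'\rVert\varepsilon\), choosing \(\varepsilon\ll\lambda\lVert h'\rVert/\lVert n'\rVert\) keeps \(d\Theta_\lambda\) uniformly bounded with a uniformly bounded inverse. The main obstacle is global injectivity of \(\Theta_\lambda\), which is needed for \(\Theta_\lambda\) to be a diffeomorphism onto an open set (by the inverse function theorem, openness and local diffeomorphism are then automatic). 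I would argue injectivity as follows.

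For nearby parameters, \(|q_1-q_2|\le\eta\), write the difference as a \(\tau\)-integral of the derivative. The derivative is uniformly close to the block-diagonal model map, and \(\lVert d^2\Theta\rVert\) is bounded uniformly because \(k''\), \(U'\), and \(U''\) are uniformly bounded. This gives
\[
\lVert\Theta(q_1,z_1)-\Theta(q_2,z_2)\rVert\ge c\bigl(|q_1-q_2|+\lVert z_1-z_2\rVert\bigr),
\]
a uniform quantitative inverse function theorem.

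For far parameters, \(|q_1-q_2|\ge\eta\), I use
\[
\lVert k(q_1)-k(q_2)\rVert^2=2\lVert h\rVert^2-2\langle h,T_{q_2-q_1}h\rangle .
\]
This is bounded below by some \(\kappa(\eta)>0\) uniformly. Indeed, the autocorrelation \(\langle h,T_sh\rangle\) is continuous, equals \(\lVert h\rVert^2\) only at \(s=0\), and vanishes for \(|s|\) larger than the diameter of the support of \(h\). Now shrink \(\varepsilon\) so that \(2\varepsilon<\lambda\sqrt{\kappa(\eta)}\); then \(\Theta(q_1,z_1)\neq\Theta(q_2,z_2)\).

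\textbf{Step 4: separation (item \ref{it:bump-tube-separation}).} Suppose \(q_n\to\infty\) and, along a subsequence, \(\Theta_\lambda(q_n,z_n)\to y\). Pass to a further subsequence with \(|q_n-q_m|\ge\operatorname{diam}\operatorname{supp}h\) for \(n\neq m\). Then \(k(q_n)\perp k(q_m)\), so
\[
\lVert\lambda k(q_n)-\lambda k(q_m)\rVert=\lambda\sqrt2\lVert h\rVert .
\]
With \(\varepsilon<\lambda\lVert h\rVert/4\) we get \(\lVert\Theta(q_n,z_n)-\Theta(q_m,z_m)\rVert\ge\lambda\lVert h\rVert(\sqrt2-\tfrac12)>0\). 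So the sequence is not Cauchy, a contradiction.
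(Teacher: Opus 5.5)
Your proposal is correct and follows the paper's proof essentially step by step. It uses the same transport ODE $U'=(n'\otimes n-n\otimes n')U$ with $N=n(q_*)^\perp$, and the same near/far splitting for global injectivity: a quantitative inverse function theorem for nearby parameters, and a positive lower bound on $\lambda\lVert k(q_1)-k(q_2)\rVert$ for separated ones. Item (iii) is handled by the same separated-subsequence argument, except that you use disjointly supported translates rather than reusing the paper's constant $d_\delta-2\varepsilon$. One small slip in Step 2: it is $U(q)n(q_*)-n(q)$ itself, not its derivative, that solves $y'=B(q)y$ with $y(q_*)=0$; this follows from $n'=\bigl(n'\otimes n-n\otimes n'\bigr)n$, which holds because $\lVert n\rVert=1$.
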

	
	\begin{proof}
		\textbf{\Cref{it:bump-tube-normal}.}
		Put
		\[
		\tau(q):=\frac{k'(q)}{\lVert h'\rVert_{L^2}}.
		\]
		The denominator is nonzero, since a nonzero compactly supported smooth
		function cannot be constant. Thus \(\tau\) is a smooth unit vector field
		whose derivatives are uniformly bounded. For \(u,v\in\cK\), write
		\((u\otimes v)w:=\langle w,v\rangle_{\cK}u\), and define
		\[
		B(q):=\tau'(q)\otimes\tau(q)-\tau(q)\otimes\tau'(q).
		\]
		Then
		\[
		B(q)^*=-B(q),
		\qquad
		B(q)\tau(q)=\tau'(q),
		\]
		and \(B\), together with all its derivatives, is uniformly bounded in
		operator norm. Let \(U\) solve
		\[
		U'(q)=B(q)U(q),
		\qquad
		U(q_*)=\id_{\cK}.
		\]
		The map \(U\) is smooth in the operator norm and takes values in the
		orthogonal operators. Both \(U(q)\tau(q_*)\) and \(\tau(q)\) solve the
		same initial-value problem, and hence
		\[
		U(q)\tau(q_*)=\tau(q).
		\]
		Taking \(N:=\tau(q_*)^\perp\), we obtain
		\[
		U(q)N=\tau(q)^\perp=(k'(q))^\perp.
		\]
		
		\textbf{\Cref{it:bump-tube-coordinates}.}
		At \(z=0\),
		\[
		d\Theta_\lambda(q,0)(a,\zeta)
		=a\lambda k'(q)+U(q)\zeta,
		\]
		and the two summands are orthogonal. With
		\[
		b:=\min\{\lambda\lVert h'\rVert_{L^2},1\},
		\]
		it follows that
		\[
		\bigl\lVert d\Theta_\lambda(q,0)(a,\zeta)\bigr\rVert
		\geq b\sqrt{a^2+\lVert\zeta\rVert^2}.
		\]
		Moreover,
		\[
		\bigl\|
		\bigl(d\Theta_\lambda(q,z)-d\Theta_\lambda(q,0)\bigr)(a,\zeta)
		\bigr\|
		\leq\lVert U'(q)\rVert\lVert z\rVert |a|.
		\]
		We may therefore choose \(\varepsilon>0\) so that all differentials with
		\(\lVert z\rVert<2\varepsilon\) are uniformly invertible. Since the
		second derivatives of \(\Theta_\lambda\) are uniformly bounded on this
		set, the quantitative inverse function theorem gives \(\delta>0\) such
		that, after decreasing \(\varepsilon\) if necessary,
		\(\Theta_\lambda\) is injective on every set
		\[
		(q-\delta,q+\delta)\times B_N(0,\varepsilon).
		\]
		
		To compare pieces with separated parameters, set
		\[
		d_\delta
		:=\lambda\inf_{|s|\geq\delta}
		\lVert h-h(\,\cdot-s)\rVert_{L^2}>0.
		\]
		Indeed, distinct translates of a nonzero compactly supported function are
		distinct, while translates with sufficiently separated supports are
		orthogonal. Decrease \(\varepsilon\) once more so that
		\(2\varepsilon<d_\delta\). If
		\(\Theta_\lambda(q,z)=\Theta_\lambda(q',z')\) and
		\(|q-q'|\geq\delta\), then
		\[
		d_\delta
		\leq\lambda\lVert k(q)-k(q')\rVert
		=\lVert U(q)z-U(q')z'\rVert
		<2\varepsilon,
		\]
		a contradiction. Thus \(\Theta_\lambda\) is globally injective. Since it
		is a local diffeomorphism, it is a diffeomorphism onto its open image.
		The formula
		\[
		d\Theta_\lambda(q,z)(a,\zeta)
		=a\bigl(\lambda k'(q)+U'(q)z\bigr)+U(q)\zeta
		\]
		and the preceding uniform invertibility estimate give the asserted bounds
		on \(d\Theta_\lambda\) and its inverse.
		
		\textbf{\Cref{it:bump-tube-separation}.}
		If \(|q-q'|\geq\delta\), then
		\[
		\bigl\lVert
		\Theta_\lambda(q,z)-\Theta_\lambda(q',z')
		\bigr\rVert
		\geq d_\delta-2\varepsilon>0.
		\]
	Every sequence \(q_n\to\infty\) has a \(\delta\)-separated subsequence,
	so the corresponding image points are uniformly separated. If
	\((\Theta_\lambda(q_n,z_n))\) had a convergent subsequence, then the
	associated subsequence of \((q_n)\) would still tend to infinity and would
	therefore admit a \(\delta\)-separated subsubsequence. Its image would be
	uniformly separated, contradicting convergence. Hence
	\((\Theta_\lambda(q_n,z_n))\) has no convergent subsequence.
	\end{proof}
	
\subsubsection{Proof of \texorpdfstring{\Cref{prop:localized-atkin-end}}
		{the localized-end proposition}}
\label{app:localized-atkin-end-proof}
	
	\begin{proof}[Proof of \Cref{prop:localized-atkin-end}]
		Fix the data
		\[
		\cH_{\mathrm A},\quad
		f_{\mathrm A},\quad
		S,\quad
		\alpha,\quad
		a_{\mathrm{rad}},\quad
		\delta_{\mathrm A},\quad
		c_{\mathrm A}
		\]
		from \Cref{lem:atkin-radial-data}, put \(\cK:=L^2(\bR)\), and identify
		\(\cH\) isometrically with \(\cH_{\mathrm A}\oplus\cK\).
		
		We first embed \(\cH_{\mathrm A}\) into a bounded set while recording its
		radial variable in a translated bump. Choose
		\(0\ne h_0\in C^\infty_c(\bR)\), set
		\(k(q):=h_0(\,\cdot-q)\), and write
		\[
		q(x):=\sqrt{1+\lVert x\rVert_{\cH_{\mathrm A}}^2}.
		\]
		Choose \(\lambda>0\) so that
		\[
		\lambda\sqrt{1+\lVert h_0\rVert_{L^2}^2}<\frac r4,
		\]
		and define
		\[
		E(x):=\lambda\left(\frac{x}{q(x)},k(q(x))\right).
		\]
		The map \(x\mapsto x/q(x)\) is a diffeomorphism onto the open unit ball,
		and both it and \(q\) have bounded differential. Since \(k'\) has constant
		norm, \(E\) is a smooth embedding with uniformly bounded differential and
		image in \(B_{\cH_{\mathrm A}\oplus\cK}(0,r/4)\).
		
		Fix \(q_0>1\), apply \Cref{lem:translated-bump-tube} with \(h=h_0\) and
		\(q_*=q_0\), and write
		\[
		\Theta_\lambda(q,z)=\lambda k(q)+U(q)z.
		\]
		Put
		\[
		r_\lambda(q):=\lambda\sqrt{1-q^{-2}},
		\]
		and choose \(a_0>0\) and \(\varepsilon_0>0\), with
		\(\varepsilon_0\) smaller than the tube radius, such that
		\[
		a_0<\frac12r_\lambda(q_0),
		\qquad
		a_0^2+\varepsilon_0^2<\frac{r^2}{16}.
		\]
		Define
		\[
		\Xi(q,e,a,z)
		:=\bigl((r_\lambda(q)+a)e,\Theta_\lambda(q,z)\bigr)
		\]
		on
		\[
		(q_0,\infty)\times\mathbb S(\cH_{\mathrm A})
		\times(-a_0,a_0)\times B_N(0,\varepsilon_0).
		\]
		The map \(\Xi\) is a diffeomorphism onto an open subset of
		\(B_{\cH_{\mathrm A}\oplus\cK}(0,r)\), and its inverse differential is
		uniformly bounded. Indeed, its central part has norm less than \(r/4\),
		whereas the perturbation \((ae,U(q)z)\) has norm less than \(r/4\).
		Thus its image is contained in the stated ball. Moreover, the
		\(\cK\)-component determines \((q,z)\) with
		uniformly bounded inverse differential by
		\ref{it:bump-tube-coordinates} of
		\Cref{lem:translated-bump-tube}. Since \(r_\lambda(q)+a\) is uniformly
		bounded away from zero, the first component then determines
		\[
		e=\frac{u}{\lVert u\rVert},
		\qquad
		a=\lVert u\rVert-r_\lambda(q).
		\]
		
		Let \(\cV\) be the image of \(\Xi\), set \(Q=q\), and put
		\[
		\mathcal R:=a^2+\lVert z\rVert_{\cK}^2.
		\]
		Choose \(q_1>q_0\) and
		\[
		0<\varepsilon_1<\varepsilon_2<\min\{a_0,\varepsilon_0\}.
		\]
		Let \(\chi\in C^\infty(\bR,[0,1])\) vanish on a neighborhood of
		\((-\infty,q_0]\) and satisfy \(\chi=1\) on \([q_1,\infty)\). Choose
		\(\zeta\in C^\infty([0,\infty),[0,1])\) such that
		\[
		\zeta=1\quad\text{on }[0,\varepsilon_1^2],
		\qquad
		\zeta=0\quad\text{on }[\varepsilon_2^2,\infty),
		\]
		and define
		\[
		\vartheta:=\chi(Q)\zeta(\mathcal R).
		\]
		In the collar coordinates, define
		\[
		X(q,e,a,z):=\sqrt{q^2-1}\,e,
		\qquad
		F:=f_{\mathrm A}\circ X,
		\]
		and let
		\[
		Y(u,w):=(Su,0).
		\]
		Finally, set
		\[
		\delta:=\delta_{\mathrm A},
		\qquad
		h(q):=
		\frac{c_{\mathrm A}}
		{\Psi\bigl(a_{\mathrm{rad}}(q^2-1)\bigr)}
		\quad(q\geq q_0),
		\]
		and choose
		\[
		\Lambda_0
		\geq
		\max\left\{
		\sup_{\cV}F,\,
		\delta^{-1}\sup_{q\geq q_0}h(q)
		\right\}.
		\]
		
		We first verify the bounds and extension properties. The uniform bound on
		\(d\Xi^{-1}\) gives uniform bounds on \(dQ\) and \(d\mathcal R\), and hence
		on \(d\vartheta\). The functions \(F\) and \(h\) are bounded by
		\Cref{lem:atkin-radial-data} and the definition of \(h\), respectively.
		Moreover,
		\[
		\sup_{\cV}\lVert Y\rVert\leq r\lVert S\rVert,
		\qquad
		\sup_{\cV}\lVert dY\rVert\leq\lVert S\rVert.
		\]
		The cutoff \(\vartheta\) vanishes near the boundary faces \(q=q_0\),
		\(|a|=a_0\), and \(\lVert z\rVert=\varepsilon_0\). At the remaining end,
		where \(Q\to\infty\), the \(\cK\)-component has no ambient accumulation
		point by \ref{it:bump-tube-separation} of
		\Cref{lem:translated-bump-tube}. Thus \(\vartheta\) extends smoothly by
		zero to \(\cH\).
		
		For \(\Lambda\geq\Lambda_0\), define on \(\cV\)
		\[
		\rho_\Lambda:=\vartheta F+(1-\vartheta)\Lambda.
		\]
		Then
		\[
		\rho_\Lambda-\Lambda=\vartheta(F-\Lambda).
		\]
		The right-hand side vanishes near every finite boundary face, while the end
		\(Q\to\infty\) has no ambient accumulation point. Extending it smoothly by
		zero, or equivalently setting \(\rho_\Lambda=\Lambda\) outside \(\cV\),
		gives a smooth function on \(\cH\). Positivity of \(F\) and
		\(0\leq\vartheta\leq1\) also give
		\[
		0<\rho_\Lambda\leq\Lambda,
		\qquad
		\rho_\Lambda\geq\Lambda(1-\vartheta).
		\]
		
		We next establish the virial estimate. The flow of \(Y\) is
		\[
		(u,w)\longmapsto(e^{tS}u,w).
		\]
		It preserves \(Q\) and \(\mathcal R\), and hence \(\vartheta\), while
		\[
		X(e^{tS}u,w)=e^{tS}X(u,w).
		\]
		Consequently,
		\[
		d\vartheta(Y)=0,
		\qquad
		dF(Y)=d(f_{\mathrm A})_X(SX).
		\]
		Since
		\[
		\lVert X\rVert_{\cH_{\mathrm A}}^2=Q^2-1,
		\]
		\eqref{eq:atkin-virial} gives
		\[
		\frac12\,dF(Y)+\delta F
		\geq
		\frac{c_{\mathrm A}}
		{\Psi\bigl(a_{\mathrm{rad}}(Q^2-1)\bigr)}
		=h(Q).
		\]
		For all sufficiently large \(q\), there is a constant \(c>0\) such that
		\[
		\sqrt{h(q)}\geq\frac{c}{q\log q}.
		\]
		It follows that
		\[
		\int_{q_0}^{\infty}\sqrt{h(q)}\,dq=\infty.
		\]
		If \(Q\leq R\), then
		\(\lVert X\rVert_{\cH_{\mathrm A}}^2\leq R^2-1\), and hence
		\[
		F
		\geq
		\frac{
			\exp\bigl(-\lVert C\rVert_{\mathrm{op}}(R^2-1)\bigr)
		}{
			\Psi\bigl(a_{\mathrm{rad}}(R^2-1)\bigr)
		}>0.
		\]
		Denote the positive constant on the right by \(\widehat m_R\).
		Since \(d\vartheta(Y)=0\), the preceding estimate and the choice
		of \(\Lambda_0\) imply
		\[
		\begin{aligned}
			\frac12\,d\rho_\Lambda(Y)+\delta\rho_\Lambda
			&=
			\vartheta\left(\frac12\,dF(Y)+\delta F\right)
			+(1-\vartheta)\delta\Lambda\\
			&\geq h(Q).
		\end{aligned}
		\]
		If in addition \(\vartheta\geq1/4\), the displayed lower bound for \(F\)
		gives
		\[
		\rho_\Lambda\geq\vartheta F\geq\frac14\widehat m_R
		\qquad\text{on }\{Q\leq R\}.
		\]
		This proves \ref{it:localized-end-factor}.
		
		It remains to construct the curve in
		\ref{it:localized-end-escape}. For \(x\ne0\) with \(q(x)>q_0\), the point
		\(E(x)\) has collar coordinates
		\[
		\bigl(q(x),x/\lVert x\rVert,0,0\bigr),
		\]
		so that \(X(E(x))=x\). Since
		\(\lVert\alpha(t)\rVert_{\cH_{\mathrm A}}\to\infty\), one has
		\[
		Q(E(\alpha(t)))=q(\alpha(t))\longrightarrow\infty.
		\]
		Choose \(t_*>0\) so that \(\vartheta(E(\alpha(t)))=1\) for
		\(t\geq t_*\), and define
		\(\beta(t):=E(\alpha(t+t_*))\). Then \(\vartheta\circ\beta=1\), so
		\(\rho_\Lambda\circ\beta=F\circ\beta\) for every
		\(\Lambda\geq\Lambda_0\), and
		\[
		L_{\rho_\Lambda\langle\cdot,\cdot\rangle_{\cH}}(\beta)
		\leq
		\sup_x\lVert dE_x\rVert\,
		L_{G_{\mathrm A}}(\alpha)<\infty.
		\]
		For every \(t_n\to\infty\), the \(\cK\)-component of \(\beta(t_n)\) is
		\(\lambda k(q(\alpha(t_n+t_*)))\), where
		\(q(\alpha(t_n+t_*))\to\infty\). By
		\ref{it:bump-tube-separation} of
		\Cref{lem:translated-bump-tube}, this sequence has no convergent
		subsequence. The same is therefore true of \((\beta(t_n))_n\).
	\end{proof}
	
	\bibliographystyle{abbrv}
	\bibliography{ref_new}

\end{document}